\documentclass[11pt,a4paper]{amsart}
\usepackage{fullpage}
\usepackage{amsmath}
\usepackage{amsthm}
\usepackage{amssymb}
\usepackage{mathtools}
\usepackage[only,mapsfrom]{stmaryrd}
\usepackage{graphicx}
\usepackage{cancel} 
\usepackage{color} 
\usepackage[all]{xy} 
\usepackage{tikz}
\usetikzlibrary{arrows,calc,cd}
\usepackage{nicefrac}
\usepackage{paralist}
\usepackage{enumitem}
\usepackage[normalem]{ulem}
\usepackage[colorlinks,citecolor=blue,linkcolor=blue,urlcolor=blue,filecolor=blue,breaklinks]{hyperref}

\DeclareRobustCommand{\SkipTocEntry}[5]{}
\setcounter{tocdepth}{2}

\numberwithin{equation}{section}

\newenvironment{itemizeb}%
{\begin{compactitem}

}%
{\end{compactitem}}

\newtheorem{thm}{Theorem}[section]
\newtheorem{athm}{Theorem}
\newtheorem{acor}[athm]{Corollary}
\newtheorem{lem}[thm]{Lemma}
\newtheorem{prop}[thm]{Proposition}
\newtheorem{cor}[thm]{Corollary}

\newtheorem{obs}[thm]{Observation}
\newtheorem*{thm*}{Theorem}
\newtheorem*{conj*}{Conjecture}
\newtheorem*{cor*}{Corollary}
\newtheorem*{ques*}{Question}

\theoremstyle{definition}
\newtheorem{ex}[thm]{Example}
\newtheorem{rem}[thm]{Remark}
\newtheorem{defn}[thm]{Definition}
\newtheorem{notation}[thm]{Notation}
\newtheorem{construction}[thm]{Construction}
\newtheorem*{rem*}{Remark}

\newcommand{\cA}{\mathcal{A}}

\newcommand{\cC}{\mathcal{C}}

\newcommand{\cE}{\mathcal{E}}
\newcommand{\cF}{\mathcal{F}}
\newcommand{\cG}{\mathcal{G}}

\newcommand{\cM}{\mathcal{M}}

\newcommand{\cU}{\mathcal{U}}

\newcommand{\bD}{\mathbb{D}}

\newcommand{\bI}{\mathbb{I}}

\newcommand{\bN}{\mathbb{N}}

\newcommand{\bR}{\mathbb{R}}
\newcommand{\bS}{\mathbb{S}}

\newcommand{\bZ}{\mathbb{Z}}

\DeclareMathOperator{\Lk}{Lk}
\DeclareMathOperator{\Fix}{Fix}
\DeclareMathOperator{\Map}{Map}
\DeclareMathOperator{\PMap}{PMap}
\DeclareMathOperator{\Diff}{Diff}
\DeclareMathOperator{\Homeo}{Homeo}
\DeclareMathOperator{\Ends}{Ends}
\newcommand{\Hom}{{\mathrm{Hom}}}
\newcommand{\Aut}{{\mathrm{Aut}}}
\newcommand{\id}{{{\rm id}}}
\newcommand{\Emb}{\ensuremath{\mathrm{Emb}}}
\newcommand{\Emba}{\ensuremath{\mathrm{Emb}^{\mathrm{adm}}}}

\newcommand{\incl}[3][right]%
{%
\draw[<-,>=#1 hook] #2 to ($ #2!0.5!#3 $);
\draw[->,>=stealth'] ($ #2!0.5!#3 $) to #3;%
}
\newcommand{\inclusion}[5][right]%
{%
\draw[<-,>=#1 hook] #4 to ($ #4!0.5!#5 $) node[#2,font=\small]{#3};
\draw[->,>=stealth'] ($ #4!0.5!#5 $) to #5;%
}

\newenvironment{inlinecond}[1]{%
  \begin{enumerate}%
  \item%
}{%
  \end{enumerate}%
}

\title{On the homology of big mapping class groups}

\author{Martin Palmer}
\address{Institutul de Matematică Simion Stoilow al Academiei Române, 21 Calea Griviței, 010702 Bucharest, Romania}
\email{mpanghel@imar.ro}

\author{Xiaolei Wu}
\address{Shanghai Center for Mathematical Sciences, Jiangwan Campus, Fudan University, No.2005 Songhu Road, Shanghai, 200438, P.R. China}
\email{xiaoleiwu@fudan.edu.cn}

\subjclass[2020]{57K20, 20J06}
\keywords{Big mapping class groups, homological stability, acyclic groups, dissipated groups.}
\date{5 September 2024}

\begin{document}

\begin{abstract}
We prove that the mapping class group of the one-holed Cantor tree surface is acyclic. This in turn determines the homology of the mapping class group of the once-punctured Cantor tree surface (i.e.~the plane minus a Cantor set), in particular answering a recent question of Calegari and Chen. We in fact prove these results for a general class of infinite-type surfaces called binary tree surfaces. To prove our results we use two main ingredients: one is a modification of an argument of Mather related to the notion of \emph{dissipated groups}; the other is a general homological stability result for mapping class groups of infinite-type surfaces.
\end{abstract}
\maketitle

\section*{Introduction}

There is a long tradition of calculating the homology of mapping class groups of surfaces. Most notably, Arnold \cite{Ar69} calculated the homology of the pure braid groups; Harer \cite{Har85} proved that the mapping class groups of compact, connected, orientable surfaces satisfy homological stability under various surface operations; Madsen and Weiss \cite{MW07} calculated the stable homology of the mapping class groups of compact, connected, orientable surfaces, in particular confirming the Mumford conjecture \cite{Mumford1983}.

More recently, there has been a wave of interest in studying \emph{big mapping class groups} (mapping class groups of infinite-type surfaces); see \cite{AV20} for a recent survey. However, there are so far relatively few studies of the homology of big mapping class groups, and almost none in degrees above one. The first cohomology of big pure mapping class groups $\PMap(S)$ has been calculated (and is countable) when the genus of $S$ is positive and has been shown to be uncountable when the genus of $S$ is zero; see \cite{APV20,DomatPlummer2020}. On the other hand, the first \emph{homology} group of $\PMap(S)$ is uncountable whenever $S$ has at most one non-planar end; see \cite{Domat2022}. The first homology of the full mapping class group $\Map(S)$ has been calculated in certain cases: if $\cC$ is a Cantor set in a \emph{finite type} surface $S$, then $H_1(\Map(S \smallsetminus \cC)) \cong H_1(\Map(S))$ \cite{CalegariChen2022} (see also \cite{Vlamis21} for three special cases). On the other hand, the first homology of $\Map(\bR^2 \smallsetminus \bN)$ has been shown to be uncountable; see \cite{MalesteinTao21}. Finally, one earlier calculation of \emph{higher} homology groups of big mapping class groups of which we are aware is $H_2(\Map(\bS^2 \smallsetminus \cC)) \cong \bZ/2$ \cite{CC21}.

The purpose of the present paper is to initiate a systematic study of the homology of big mapping class groups $\Map(S)$.\footnote{Recall that, by definition, $\Map(S) = \pi_0(\Homeo_\partial(S))$, where $\Homeo_\partial(S)$ denotes the topological group of self-homeomorphisms of $S$ restricting to the identity on $\partial S$, equipped with the compact-open topology.} Surprisingly, we are able to completely determine the homology of $\Map(S)$ in all degrees for many infinite-type surfaces $S$, including for example the once-punctured Cantor tree surface $S = \bR^2 \smallsetminus \cC$. In contrast, we prove in parallel work \cite{PalmerWu2} that the integral homology of many big mapping class groups is uncountable in all (positive) degrees.

\addtocontents{toc}{\SkipTocEntry}
\subsection*{Results.}

Our first result completely calculates the homology of the mapping class group of the plane minus a Cantor set $\bR^2 \smallsetminus \cC$. For degrees $1$ and $2$, it was previously known by \cite{Vlamis21,CalegariChen2022} that $H_1(\Map(\bR^2 \smallsetminus \cC)) = 0$ and that $H_2(\Map(\bR^2 \smallsetminus \cC))$ contains an element of infinite order.

\begin{athm}
\label{thm:plane-minus-Cantor-special-case}
For each integer $i\geq 0$, we have
\[
H_i(\Map(\bR^2 \smallsetminus \cC)) \cong \begin{cases}
    \bZ & \text{for } i \text{ even}, \\
    0 & \text{for } i \text{ odd}.
\end{cases}
\]
\end{athm}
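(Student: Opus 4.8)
The plan is to deduce Theorem~\ref{thm:plane-minus-Cantor-special-case} from the acyclicity of the mapping class group of the \emph{one-holed} Cantor tree surface $\Sigma$ (a closed disc with a Cantor set removed from its interior), which is the main theorem of this paper and is proved later by the Mather-style argument built on the notion of a dissipated group. Granting that $\Map(\Sigma)$ is acyclic, the first step is to recognise $\bR^2\smallsetminus\cC$ as the once-punctured Cantor tree surface, i.e.\ as $\Sigma$ with its boundary circle $\partial\Sigma$ capped off by a once-punctured disc; equivalently $\bR^2\smallsetminus\cC$ is the interior of $\Sigma$, and, since the planar end arising from $\partial\Sigma$ is the unique isolated end, every homeomorphism of this interior extends canonically over the end-compactification. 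This identification produces a central extension
\[
1\longrightarrow\bZ\longrightarrow\Map(\Sigma)\longrightarrow\Map(\bR^2\smallsetminus\cC)\longrightarrow 1 ,
\]
in which the subgroup $\bZ$ is generated by the Dehn twist $T_{\partial\Sigma}$ about the boundary.

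I would verify this extension exactly as in the finite-type setting. Surjectivity of the capping map is the Alexander trick: every homeomorphism of $\bR^2\smallsetminus\cC$ is isotopic to one that is the identity on a disc neighbourhood of the isolated end, and such a representative restricts to $\Sigma$. For the kernel one first checks that $T_{\partial\Sigma}$ has infinite order in $\Map(\Sigma)$ — a standard arc argument, using an essential arc with both endpoints on $\partial\Sigma$ (for instance one splitting the Cantor set into two nonempty parts), whose images under the powers of $T_{\partial\Sigma}$ are pairwise non-isotopic rel endpoints — and then that a homeomorphism of $\Sigma$ fixing $\partial\Sigma$ which becomes isotopically trivial after capping is a power of $T_{\partial\Sigma}$, by the usual change-of-coordinates argument. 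Centrality is automatic for a boundary twist. The homological computation is then formal: a central extension by $\bZ$ gives rise to a principal $S^1$-bundle $B\Map(\Sigma)\to B\Map(\bR^2\smallsetminus\cC)$ whose Euler class $e\in H^2(\Map(\bR^2\smallsetminus\cC);\bZ)$ is the class of the extension. Since the total space $B\Map(\Sigma)$ is acyclic, the Gysin sequence of this bundle collapses: cap product with $e$ is an isomorphism $H_n(\Map(\bR^2\smallsetminus\cC))\xrightarrow{\ \cong\ }H_{n-2}(\Map(\bR^2\smallsetminus\cC))$ for every $n\geq 2$, while its bottom gives $H_0\cong\bZ$ and $H_1=0$. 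Induction on $n$ yields $H_i(\Map(\bR^2\smallsetminus\cC))\cong\bZ$ for $i$ even and $0$ for $i$ odd.

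The genuine obstacle lies outside this deduction, in the acyclicity of $\Map(\Sigma)$ itself: there one must exploit the self-similarity of the one-holed Cantor tree surface — that it absorbs an infinite disjoint union of copies of itself in a way compatible with the boundary — to run a Mather-style infinite-swindle argument, and it is precisely the framework of dissipated groups that makes this rigorous. Within the present deduction the only delicate point is the capping exact sequence for this infinite-type surface, and in particular that $T_{\partial\Sigma}$ generates the \emph{entire} kernel; this is a routine adaptation of the classical argument. (The paper's second main ingredient, the general homological stability theorem for mapping class groups of infinite-type surfaces, is what makes analogous computations go through uniformly for the wider class of binary tree surfaces and for the closed and multiply-punctured Cantor tree surfaces; for $\bR^2\smallsetminus\cC$ the decisive input is the acyclicity statement.)
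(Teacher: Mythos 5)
Your proposal is correct and follows the same route as the paper: both deduce the result from the acyclicity of $\Map(\bD^2\smallsetminus\cC)$ together with the central extension $1\to\bZ\to\Map(\bD^2\smallsetminus\cC)\to\Map(\bR^2\smallsetminus\cC)\to 1$ generated by the boundary Dehn twist. The paper phrases the final step in terms of the Lyndon--Hochschild--Serre spectral sequence of this extension, which for a central $\bZ$ collapses to exactly the Gysin sequence you use.
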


\begin{rem}
Thus $H^2(\Map(\bR^2 \smallsetminus \cC)) \cong \bZ$, answering positively Question A.15 of \cite{CC21}.
\end{rem}

Theorem \ref{thm:plane-minus-Cantor-special-case} is a consequence of the following vanishing result, together with the fact that the mapping class group $\Map(\bD^2 \smallsetminus \cC)$ of the closed disc minus a Cantor set is a central extension of $\Map(\bR^2 \smallsetminus \cC)$ by the infinite cyclic group generated by a Dehn twist around the boundary.

\begin{athm}
\label{thm:disc-minus-Cantor-special-case}
The mapping class group $\Map(\bD^2 \smallsetminus \cC)$ is acyclic, i.e., $\widetilde{H}_*(\Map(\bD^2 \smallsetminus \cC)) = 0$.
\end{athm}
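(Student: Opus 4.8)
The plan is to exhibit $\Map(\bD^2 \smallsetminus \cC)$ as a \emph{dissipated group} (in the sense alluded to in the abstract, following Mather) and then invoke the general principle that dissipated groups are acyclic. Concretely, I would first fix a "shift" structure on the one-holed Cantor tree surface: think of $\bD^2 \smallsetminus \cC$ as built from a countable sequence of nested copies of itself, so that there is a natural embedding taking a disjoint union of countably many copies of $\bD^2 \smallsetminus \cC$ into a single (sub)surface of $\bD^2 \smallsetminus \cC$, all supported away from the boundary. This is where the \emph{binary tree} combinatorics of the Cantor set enters: the self-similarity of $\cC$ gives, for each compact separating arc/curve, a homeomorphism of the surface pushing everything into one half, and these can be iterated.

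The key steps, in order, are: (1) Make precise the notion of dissipator: a homomorphism (or at least a compatible family of homomorphisms and conjugating elements) realizing $G \cong G^{\oplus\infty}$ inside $G$ together with an element $g\in G$ implementing the "infinite shift," so that $g$ conjugates the $k$-th copy to the $(k+1)$-st and the whole infinite product commutes appropriately. (2) Verify that $\Map(\bD^2 \smallsetminus \cC)$ carries such a structure: the disjoint subsurfaces supporting the countably many copies can be chosen to accumulate only onto a subset of the Cantor set (not the boundary), so their "infinite union" still makes sense as a single mapping class, and a homeomorphism of $\bD^2 \smallsetminus \cC$ permuting these subsurfaces by a shift exists because all the relevant subsurfaces are homeomorphic to $\bD^2\smallsetminus\cC$ (self-similarity) and the complement of their union is "absorbable." (3) Run the Mather-style argument: for a dissipated group $G$, the shift element $g$ together with the diagonal-type embeddings forces $H_*(G)\to H_*(G)$ to satisfy $x = x + x$ in positive degrees (via an Eilenberg swindle on homology classes), hence $\widetilde H_*(G)=0$. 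I would present step (3) as a clean lemma ("every dissipated group is acyclic") proved once and for all, so that the body of the proof is just the geometric verification in step (2).

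The main obstacle — and the place I expect to spend the most care — is step (2): constructing the infinite disjoint family of subsurfaces together with an honest self-homeomorphism of $\bD^2 \smallsetminus \cC$ that shifts them, while ensuring (a) the family is locally finite enough that the "infinite product" homomorphism $\bigoplus_{k} \Map(\bD^2\smallsetminus\cC) \to \Map(\bD^2\smallsetminus\cC)$ is well-defined and (b) the shift homeomorphism genuinely extends over the accumulation set $\cC$ (this is a point-set/topological matching condition on the ends, where the self-similar identification of $\cC$ with a Cantor subset of itself is essential). One must also check that the subsurface inclusions really are $\pi_0$-injective and that conjugation by $g$ does what the dissipator axioms require; these are routine once the geometric picture is set up, but the bookkeeping with basepoints/boundary and the choice of how the infinitely many pieces nest deserves a careful figure. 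A secondary point is to confirm that no homological stability input is needed here — Theorem~\ref{thm:disc-minus-Cantor-special-case} should follow from the dissipated-group argument alone, with stability reserved for the binary-tree generalizations.
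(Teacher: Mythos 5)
Your steps (1)–(2) match the geometric core of the paper's argument (\S\ref{subsec:Mather-application}): one does indeed exhibit a nested copy $S\cong\bD^2\smallsetminus\cC$ inside $\bD^2\smallsetminus\cC$, together with a shift homeomorphism $\varphi$ whose iterates push $S$ to a pairwise-disjoint family accumulating only at the ends, and then runs Mather's infinite-iteration/swindle argument. The "infinite iteration lemma" and the verification that $\varphi$ extends over the Cantor set of ends are exactly Lemma \ref{lem:Mather-preparation-lemma} and Proposition \ref{prop:acyclicity-assuming-stability}. But your step (3) and your "secondary point" contain a real gap, and in fact the opposite of what you claim is true: a homological stability input \emph{is} needed, and it is the bulk of the paper's work. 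What the swindle actually produces is $j_*(\alpha)=i_*(\alpha)+l_*(\alpha)$ together with $j_*=l_*$ (conjugation by the shift is inner), hence $i_*(\alpha)=0$, where $i\colon\Map(S)\to\Map(\bD^2\smallsetminus\cC)$ is the map induced by extension-by-the-identity. This is \emph{not} the identity on $H_*(G)$, so you do not directly get $x=x+x$. To conclude that $\alpha=0$ (and hence, via the abstract isomorphism $\Map(S)\cong\Map(\bD^2\smallsetminus\cC)$, that $\widetilde H_*=0$), one needs $i_*$ to be injective on homology; in the general form needed for the induction (Proposition \ref{prop:Mather-axiomatic}) one needs $i_*$ to be a homology isomorphism. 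The injectivity of the \emph{group} homomorphism $i$ (Lemma \ref{lem:injective-map-of-Aut}, your "$\pi_0$-injective" remark) does not imply injectivity on homology, and the self-similarity $S\cong\bD^2\smallsetminus\cC$ does not either, since the inclusion $i$ and the abstract homeomorphism-induced isomorphism are different maps. Proving that $i_*$ is a homology isomorphism is precisely the homological auto-stability statement, which is Theorem \ref{thm:hom-stab}\ref{case:Cantor} and occupies \S\S\ref{s:hs}--\ref{s:high-connectivity} of the paper.

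The paper also explains (Remark \ref{rem:difference-in-hypotheses}) why the classical dissipated / binate / fragmentation route you allude to cannot be made to work here: one would need the subgroup of mapping classes supported away from a point of $\cC$ to be (homotopy-)dissipated, but that subgroup surjects onto $\bZ$, so it is not acyclic. This is the obstruction that forces the authors to replace the classical "exhaustion by supported subgroups, each with a dissipator" by the weaker notion of a \emph{homologically} dissipated group: a single supported subgroup $H$ with a dissipator, plus the hypothesis that $H\hookrightarrow G$ is a homology isomorphism. In other words, the homological stability you hoped to avoid is exactly the price paid for the fact that the fragmentation step of the classical argument fails in this setting.
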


The \emph{Cantor tree surface} is the $2$-sphere minus a Cantor set, $\bS^2 \smallsetminus \cC$, and gets its name from its alternative realisation as the boundary of a closed regular neighbourhood of an infinite binary tree properly embedded in $\bR^3$. Theorem \ref{thm:disc-minus-Cantor-special-case} may therefore be stated as saying that the mapping class group of the \emph{one-holed Cantor tree surface} is acyclic and Theorem \ref{thm:plane-minus-Cantor-special-case} calculates the homology of the mapping class group of the \emph{once-punctured Cantor tree surface}. The one-holed Cantor tree surface $\bD^2 \smallsetminus \cC$ is also denoted by $D_\cC$ and is illustrated in Figure \ref{fig:Cantor-tree-surfaces}.

\addtocontents{toc}{\SkipTocEntry}
\subsection*{Tree surfaces.}

Theorems \ref{thm:plane-minus-Cantor-special-case} and \ref{thm:disc-minus-Cantor-special-case} both hold more generally for a class of surfaces that we call \emph{binary tree surfaces}. To define this, we first introduce a notion of \emph{graph surfaces}:

\begin{defn}[\emph{Graph surfaces}]
\label{def:graph-surfaces}
Let $\Gamma$ be a graph of finite valence with no self-loops and let $\Sigma$ be a connected surface without boundary. For $i\geq 0$, denote by $\Sigma_i$ the result of removing the interiors of $i$ pairwise disjoint closed discs from $\Sigma$. For each vertex $v$ of $\Gamma$, let $\Sigma_v = \Sigma_{\lvert v \rvert}$ equipped with a labelling of its boundary circles by the edges of $\Gamma$ incident to $v$. Consider the disjoint union $\bigsqcup_v \Sigma_v$. If $e$ is an edge of $\Gamma$ incident to $v$ and $w$, glue the boundary circle of $\Sigma_v$ labelled by $e$ by an orientation-preserving diffeomorphism to the boundary circle of $\Sigma_w$ labelled by $e$. The corresponding surface
\[
\mathrm{Gr}_\Gamma(\Sigma) := \bigsqcup_v \Sigma_v / {\sim}
\]
is the \emph{graph surface} associated to $\Gamma$ and $\Sigma$.
\end{defn}

As a special case of this, we have the following.

\begin{defn}[\emph{Linear and binary tree surfaces}]
\label{def:linear-binary-tree-surfaces}
Denote by $\mathfrak{B}$ the infinite rooted binary tree and by $\mathfrak{L}$ the infinite rooted unary tree (i.e., the tree with vertices $\bN$ and edges between pairs of consecutive natural numbers). Then we may take $\Gamma = \mathfrak{B}$ or $\Gamma = \mathfrak{L}$ in the previous definition. We call $\mathrm{Gr}_\mathfrak{B}(\Sigma)$ and $\mathrm{Gr}_\mathfrak{L}(\Sigma)$ the \emph{binary tree surface} and the \emph{linear surface} associated to $\Sigma$.
\end{defn}

See Figures \ref{fig:Cantor-tree-surfaces} and \ref{fig:Loch-Ness-monster-surface} for examples. The effects of the operations $\mathrm{Gr}_\mathfrak{B}(-)$ and $\mathrm{Gr}_\mathfrak{L}(-)$ on the spaces of ends of surfaces are discussed in \S\ref{ss:end-spaces}.

\begin{figure}[tb]
    \centering
    \includegraphics[scale=0.8]{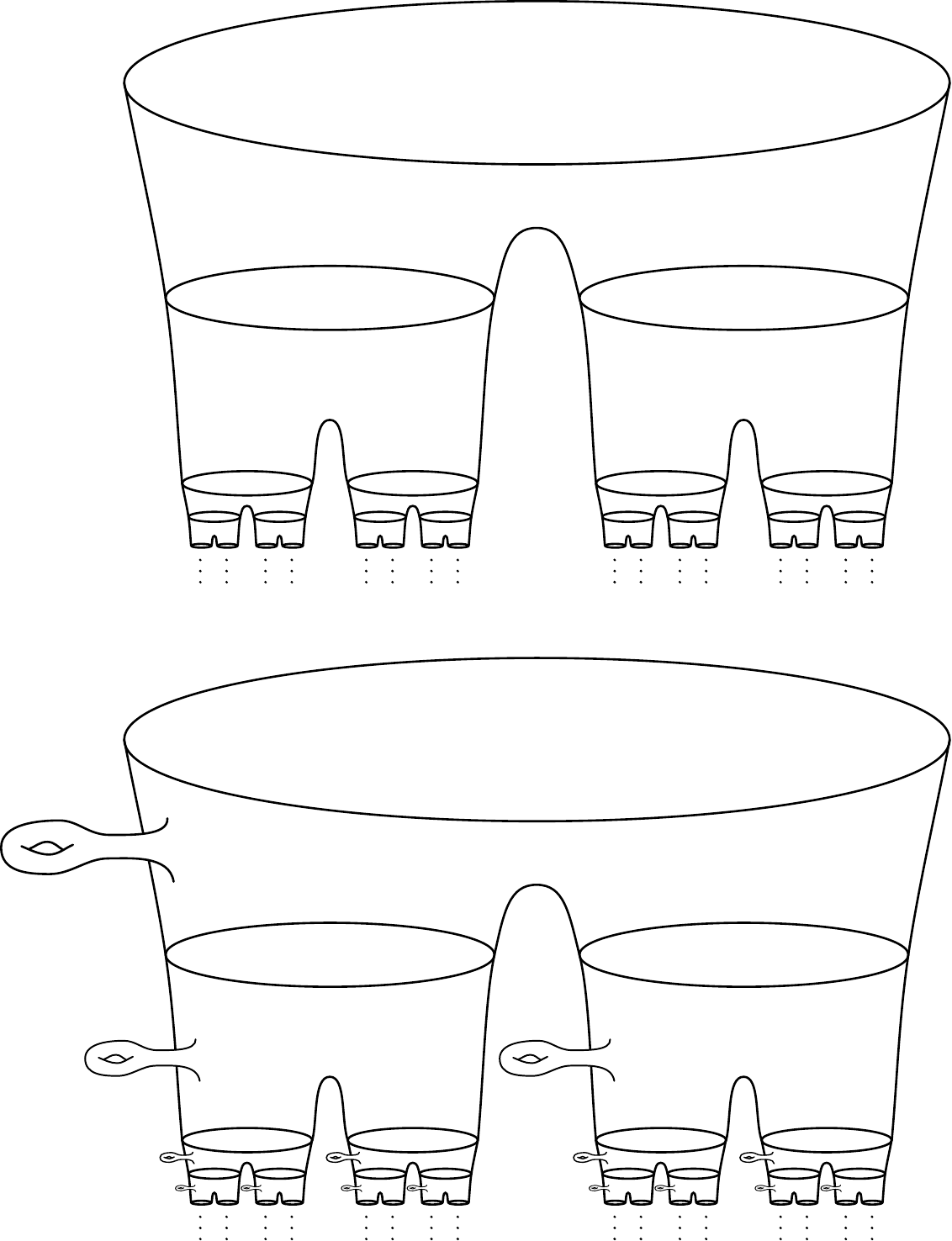}
    \caption{The surfaces $D_\cC$ and $B_\cC$, whose mapping class groups are acyclic by Theorem \ref{thm:disc-minus-Cantor}. Capping off the boundary (the top circle) with a disc results in the \emph{Cantor tree surface} $\mathrm{Gr}_\mathfrak{B}(\bS^2)$ and the \emph{blooming Cantor tree surface} $\mathrm{Gr}_\mathfrak{B}(T^2)$.}
    \label{fig:Cantor-tree-surfaces}
\end{figure}

\begin{notation}
\label{notation:one-holed-tree-surfaces}
If $\Sigma$ is a surface without boundary, we denote by $\Sigma_\circ$ the surface with circle boundary obtained by deleting the interior of a closed subdisc of $\Sigma$. We refer to this surface as the ``one-holed $\Sigma$''. Since we will mostly work with one-holed binary tree surfaces and one-holed linear surfaces, we introduce a shorter notation for these:
\[
\mathfrak{B}(\Sigma) := \mathrm{Gr}_\mathfrak{B}(\Sigma)_\circ \qquad\text{and} \qquad \mathfrak{L}(\Sigma) := \mathrm{Gr}_\mathfrak{L}(\Sigma)_\circ .
\]
\end{notation}

Key examples of graph surfaces, binary tree surfaces and linear surfaces are the following.

\begin{ex}
If $\Sigma = \bS^2$, then $\mathrm{Gr}_\Gamma(\bS^2)$ is homeomorphic to the boundary of a regular neighbourhood of an embedding of $\Gamma$ into $\bR^3$. In particular, if $\Gamma$ is finite, then $\mathrm{Gr}_\Gamma(\bS^2)$ is a closed, connected, orientable surface of genus equal to the first Betti number of $\Gamma$.

On the other hand, we have $\mathrm{Gr}_\mathfrak{L}(\bS^2) \cong \bR^2$ and the binary tree surface $\mathrm{Gr}_\mathfrak{B}(\bS^2)$ is the \emph{Cantor tree surface}, which is homeomorphic to $\bS^2 \smallsetminus \cC$, where $\cC$ is the Cantor set, as mentioned above. The binary tree surface $\mathrm{Gr}_\mathfrak{B}(T^2)$ is often called the \emph{blooming Cantor tree surface} and the linear surface $\mathrm{Gr}_\mathfrak{L}(T^2)$ is often called the \emph{Loch Ness monster surface}; see Figure \ref{fig:Loch-Ness-monster-surface}.
\end{ex}

\begin{notation}
As a shorthand, we denote the one-holed Cantor tree surface by $D_\cC := \mathfrak{B}(\bS^2) = \mathrm{Gr}_\mathfrak{B}(\bS^2)_\circ$ and similarly we denote the one-holed blooming Cantor tree surface by $B_\cC := \mathfrak{B}(T^2) = \mathrm{Gr}_\mathfrak{B}(T^2)_\circ$. These are illustrated in Figure \ref{fig:Cantor-tree-surfaces}.
\end{notation}

\addtocontents{toc}{\SkipTocEntry}
\subsection*{Results for tree surfaces.}

Our general vanishing theorem is the following.

\begin{athm}
\label{thm:disc-minus-Cantor}
Let $\Sigma$ be a connected surface without boundary. Then the mapping class group $\Map(\mathfrak{B}(\Sigma))$ is acyclic, i.e., $\widetilde{H}_*(\Map(\mathfrak{B}(\Sigma))) = 0$.
\end{athm}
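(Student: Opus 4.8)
The plan is to prove that $G := \Map(\mathfrak{B}(\Sigma))$ is a \emph{dissipated group}, so that the variant of Mather's acyclicity argument — the first of the two ingredients advertised in the introduction — applies and yields $\widetilde{H}_*(G) = 0$. The dissipation data will be produced from the self-similar structure of the surface $\mathfrak{B}(\Sigma)$, while the general homological stability theorem for big mapping class groups — the second ingredient — will be used to pass from the subgroups on which this data is visibly available to the whole of $G$.

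First I would record two decompositions of $S := \mathfrak{B}(\Sigma)$ coming from Definition~\ref{def:graph-surfaces}. After removing the extra disc used to form the one-holed surface $\mathfrak{B}(\Sigma)$, the vertex piece of the root ($2$-valent in $\mathfrak{B}$) and of every other vertex ($3$-valent) both become thrice-holed copies of $\Sigma$; hence cutting $\mathrm{Gr}_{\mathfrak{B}}(\Sigma)_{\circ}$ along a single separating circle just below the root gives a homeomorphism $S \cong \Sigma_3 \cup (S \sqcup S)$, where $\Sigma_3$ denotes a copy of $\Sigma$ with three open discs removed, $\partial S$ is one of its three boundary circles, and the two copies of $S$ are glued along the other two. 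Iterating this cut along a fixed spine $v_0, v_1, v_2, \dots$ of $\mathfrak{B}$ exhibits the \emph{same} surface as $S \cong B \cup \bigsqcup_{i \ge 0} R_i$, in which the $R_i$ are pairwise disjoint subsurfaces, each homeomorphic to $S$, glued along successive boundary circles of a ``backbone'' surface $B$, and in which the $R_i$ leave $S$ towards a single end $\xi$. From this picture I would extract: (i) for each $i$, a homomorphism $\iota_i \colon \Map(S) \to \Map(S)$ sending a mapping class to the corresponding mapping class supported in $R_i$, with $\iota_i(\Map(S))$ and $\iota_j(\Map(S))$ commuting for $i \neq j$; and (ii) a ``backbone shift'' $u \in \Homeo_\partial(S)$ with $u(R_i) = R_{i+1}$ for every $i \ge 0$, obtained from a self-homeomorphism of the half-infinite backbone that is the identity on a collar of $\partial S$ and translates the $R$-slots one notch towards $\xi$. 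Since $u$, the subsurfaces $R_i$, and the infinite products appearing below all preserve a fixed neighbourhood basis of $\xi$, those infinite products define genuine elements of $\Homeo_\partial(S)$.

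I would then use (i) and (ii) to verify dissipation: for a subgroup $H \le G$ supported in a subsurface of $S$ small enough to admit infinitely many disjoint copies converging to an end, the pairwise commuting subgroups $\iota_1(\Map(S)), \iota_2(\Map(S)), \dots$ together with the shift $u$ constitute exactly the data of a dissipated structure on $H$ inside $G$ — one pushes $H$ into the $R_i$, absorbs infinitely many disjoint conjugates, and conjugates them back by $u$, which is precisely the mechanism Mather's argument uses to trivialise reduced homology. The step that requires work is that a finitely generated subgroup of $G$ need \emph{not} be supported in any proper subsurface of $S$ — already for $\Sigma = \bS^2$, the surface $\mathfrak{B}(\bS^2) = D_\cC$ carries mapping classes inducing minimal homeomorphisms of its Cantor set of ends, and these are supported in no proper subsurface — so the dissipation data is not directly available on all of $G$. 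I expect this to be the main obstacle, and the role of the homological stability theorem is to resolve it: it should allow one to compute $H_*(G)$ from the mapping class groups of a family of ``bounded'' subsurfaces of $S$ adapted to the self-similar decomposition (equivalently, to show that these subgroups are homologically cofinal in $G$), after which the dissipation argument applies to each of them and forces $\widetilde{H}_*(G) = 0$. Making this interlocking precise — so that no homology class ``escapes to infinity'' when passing from the controlled subgroups to $G$, which is \emph{not} a colimit of mapping class groups of subsurfaces the way the compactly supported mapping class group is — is the delicate heart of the argument; a more routine technical point is to build $u$ and the $R_i$ so as to be simultaneously compatible with $\partial S$, with the end space of $\mathfrak{B}(\Sigma)$, and uniform over all connected surfaces $\Sigma$ without boundary, including those of infinite type.
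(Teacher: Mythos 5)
Your proposal matches the paper's strategy essentially verbatim: the paper formalizes the self-similar decomposition and the backbone shift as a \emph{homologically dissipated} structure (Proposition~\ref{prop:Mather-axiomatic}), reduces acyclicity to showing that a single inclusion $\Map(\mathfrak{B}(\Sigma)) \hookrightarrow \Map(\mathfrak{B}(\Sigma) \natural \mathfrak{B}(\Sigma))$ is a homology isomorphism (Proposition~\ref{prop:acyclicity-assuming-stability}), and deduces this from the homological stability machinery of \cite{RWW17} combined with the homeomorphism $\mathfrak{B}(\Sigma)^{\natural n} \cong \mathfrak{B}(\Sigma)$, which makes the stable range infinite (Corollary~\ref{coro:reduction-to-high-conn}). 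The one imprecision worth flagging is that the subsurface supporting $H$ is not ``small'' or ``bounded'' --- it is itself homeomorphic to $\mathfrak{B}(\Sigma)$ --- and it is precisely this self-similarity that lets a \emph{single} such inclusion, rather than a homologically cofinal family of bounded subsurfaces, supply the homology-isomorphism hypothesis of the Mather-type argument.
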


In particular, taking $\Sigma = \bS^2$, we recover Theorem \ref{thm:disc-minus-Cantor-special-case}. As an immediate corollary, we obtain the following generalisation of Theorem \ref{thm:plane-minus-Cantor-special-case}.

\begin{acor}
\label{thm:plane-minus-Cantor}
Let $\Sigma$ be a connected surface without boundary. Then we have:
\[
H_i(\Map(\mathrm{Gr}_\mathfrak{B}(\Sigma) \smallsetminus pt)) \cong \begin{cases}
\bZ & \text{for } i \text{ even} \\
0 & \text{for } i \text{ odd.}
\end{cases}
\]
\end{acor}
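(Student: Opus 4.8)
The plan is to deduce Corollary~\ref{thm:plane-minus-Cantor} from Theorem~\ref{thm:disc-minus-Cantor} by exactly the mechanism used to pass from Theorem~\ref{thm:disc-minus-Cantor-special-case} to Theorem~\ref{thm:plane-minus-Cantor-special-case}. Write $S := \mathrm{Gr}_\mathfrak{B}(\Sigma)$, so that $\mathfrak{B}(\Sigma) = S_\circ$. The first step is to set up the short exact sequence
\[
1 \longrightarrow \bZ \longrightarrow \Map(\mathfrak{B}(\Sigma)) \longrightarrow \Map(S \smallsetminus pt) \longrightarrow 1
\]
obtained by capping the boundary circle of $\mathfrak{B}(\Sigma)$ with a once-punctured disc, the puncture being identified with the point deleted from $S$; the kernel $\bZ$ is generated by the Dehn twist $T_\partial$ about a boundary-parallel curve. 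I would check surjectivity by the usual observation that any self-homeomorphism of $S \smallsetminus pt$ can be isotoped to be the identity on a punctured-disc neighbourhood of $pt$, identify the kernel as the mapping classes supported near $\partial\,\mathfrak{B}(\Sigma)$ (hence the powers of $T_\partial$), and note that $T_\partial$ has infinite order because $S$ is of infinite type. The extension is central because every mapping class fixes $\partial\,\mathfrak{B}(\Sigma)$, and so the boundary-parallel curve, pointwise. (For the surfaces of primary interest, e.g. $\Sigma = \bS^2$ or $\Sigma = T^2$, the point $pt$ is the unique isolated planar end of $S \smallsetminus pt$ and the argument is as stated; for $\Sigma$ with an isolated planar end one instead works with the version of this sequence in which $\partial\,\mathfrak{B}(\Sigma)$ is capped by an ordinary disc, after checking $S \smallsetminus pt \cong S$.)

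Next, writing $G := \Map(S \smallsetminus pt)$, the fact that the extension is central with kernel $\bZ$ exhibits $B\Map(\mathfrak{B}(\Sigma))$ (up to homotopy) as the total space of the oriented circle bundle over $BG$ with Euler class $e \in H^2(G;\bZ)$; by Theorem~\ref{thm:disc-minus-Cantor} this total space is acyclic. Feeding this into the Gysin sequence of the bundle (equivalently, into the Lyndon--Hochschild--Serre spectral sequence of the extension, which degenerates to a long exact sequence since $H_*(B\bZ)$ is concentrated in degrees $0$ and $1$) yields
\[
\cdots \longrightarrow H_n\bigl(B\Map(\mathfrak{B}(\Sigma))\bigr) \longrightarrow H_n(G) \xrightarrow{\ \cap\, e\ } H_{n-2}(G) \longrightarrow H_{n-1}\bigl(B\Map(\mathfrak{B}(\Sigma))\bigr) \longrightarrow \cdots,
\]
and all the terms involving $\Map(\mathfrak{B}(\Sigma))$ vanish in positive degrees. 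Reading off the bottom of the sequence gives $H_1(G) = 0$, while for every $n \geq 2$ the cap product $\cap\, e \colon H_n(G) \to H_{n-2}(G)$ sits between two vanishing groups and is therefore an isomorphism.

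To finish, I would combine $H_0(G) \cong \bZ$ (connectedness of $S \smallsetminus pt$), $H_1(G) = 0$, and the periodicity isomorphisms $H_n(G) \cong H_{n-2}(G)$ for $n \geq 2$ into an immediate induction on $n$, giving $H_n(G) \cong \bZ$ for even $n$ and $H_n(G) = 0$ for odd $n$. Everything after the first step is purely formal once Theorem~\ref{thm:disc-minus-Cantor} is granted, so I expect the only real work --- and hence the main obstacle --- to be the careful construction of the central extension in the infinite-type setting: showing the capping homomorphism is surjective with kernel exactly $\langle T_\partial\rangle \cong \bZ$ (the infinite order of $T_\partial$ is already visible on a finite-type subsurface containing $\partial$, and surjectivity together with the identification of the kernel is an Alexander-trick argument localised at the capped-off end), plus, for non-compact $\Sigma$, the minor identification of the quotient group needed to reach the statement as written.
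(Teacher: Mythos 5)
Your proof is correct and follows exactly the paper's route: cap the boundary with a (once-punctured) disc to obtain a central extension $1 \to \bZ \to \Map(\mathfrak{B}(\Sigma)) \to \Map(\mathrm{Gr}_\mathfrak{B}(\Sigma)\smallsetminus pt) \to 1$, then feed acyclicity of $\Map(\mathfrak{B}(\Sigma))$ into the Lyndon--Hochschild--Serre/Gysin sequence to get the $2$-periodic answer. The one thing you do beyond what the paper records is to notice that surjectivity of the capping map requires $pt$ to be the unique isolated planar end, which can fail when $\Sigma$ itself has an isolated planar end, and to give the correct workaround (cap by an unpunctured disc after checking $\mathrm{Gr}_\mathfrak{B}(\Sigma)\smallsetminus pt \cong \mathrm{Gr}_\mathfrak{B}(\Sigma)$ via the end-space classification); this is a genuine point of care that the paper's one-line proof silently glosses over.
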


\begin{proof}\belowdisplayskip=-2pt
This follows from Theorem \ref{thm:disc-minus-Cantor} and an argument using the Lyndon-Hochschild-Serre spectral sequence associated to the central extension
\[
1 \to \bZ \longrightarrow \Map(\mathfrak{B}(\Sigma)) = \Map(\mathrm{Gr}_\mathfrak{B}(\Sigma) \smallsetminus \mathring{\bD}^2) \longrightarrow \Map(\mathrm{Gr}_\mathfrak{B}(\Sigma) \smallsetminus pt) \to 1.
\]\qedhere
\end{proof}

\begin{rem}
\label{rmk:uncountably-many-examples}
There are uncountably many different surfaces to which Theorem \ref{thm:disc-minus-Cantor} applies, since there are uncountably many different surfaces of the form $\mathfrak{B}(\Sigma)$. For example, when $\Sigma$ is the (unique) connected planar surface with end space $E$, then $\mathfrak{B}(\Sigma)$ is homeomorphic to $\bD^2 \smallsetminus \Upsilon^\cC(E)$, whose end space $\Upsilon^\cC(E)$ is the \emph{Cantor compactification} of $E\omega$ (see \S\ref{ss:end-spaces}). The spaces $\Upsilon^\cC([0,\omega^\alpha])$ are pairwise non-homeomorphic for different countable ordinals $\alpha$ (as explained in Remark \ref{rmk:Cantor-compactifications}), so $\mathfrak{B}(\bS^2 \smallsetminus [0,\omega^\alpha])$, varying $\alpha$, is an uncountable family of pairwise non-homeomorphic surfaces to which Theorem \ref{thm:disc-minus-Cantor} applies.
\end{rem}

\begin{rem}[\emph{Uncountable homology}]
\label{rem:uncountable-results}
To contrast Theorem \ref{thm:disc-minus-Cantor}, we prove in \cite{PalmerWu2} that the integral homology of many big mapping class groups is \emph{uncountable} in all positive degrees.
\end{rem}

\begin{figure}[tb]
    \centering
    \includegraphics{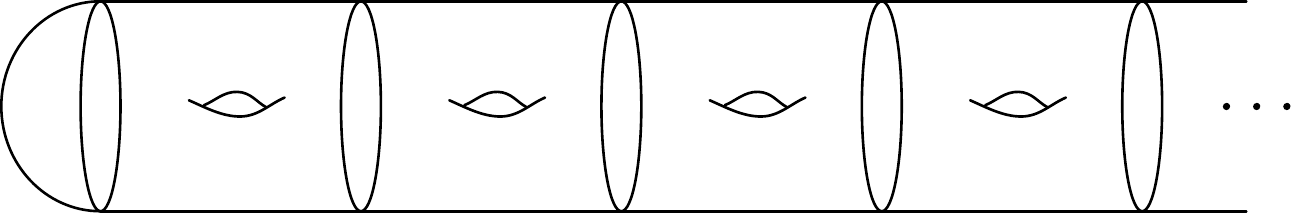}
    \caption{The Loch Ness monster surface $L = \mathrm{Gr}_\mathfrak{L}(T^2)$.}
    \label{fig:Loch-Ness-monster-surface}
\end{figure}

\addtocontents{toc}{\SkipTocEntry}
\subsection*{Homological stability.}

A key ingredient in the proof of Theorem \ref{thm:disc-minus-Cantor} is a homological stability result for big mapping class groups (Theorem \ref{thm:hom-stab}). To state this result fully, we need the notion of a \emph{topologically distinguished end} of a surface.

\begin{defn}
\label{defn:topologically-distinguished}
Let $E$ be a topological space. Two points $x,y \in E$ are called \emph{similar} if there are open neighbourhoods $U$ and $V$ of $x$ and $y$ respectively and a homeomorphism $U \cong V$ taking $x$ to $y$. This is an equivalence relation on $E$. A point $x \in E$ is called \emph{topologically distinguished} if its equivalence class under this relation is $\{x\}$, in other words it is similar only to itself.
\end{defn}

To apply this notion to the space of ends of a linear surface, we first observe:

\begin{rem}
If $E$ denotes the space of ends of $\Sigma$, then the space of ends of the linear surface $\mathfrak{L}(\Sigma)$ is homeomorphic to $(E\omega)^+$, the one-point compactification of the disjoint union $E\omega$ of countably infinitely many copies of $E$.
\end{rem}

\begin{defn}
\label{defn:telescope}
If $E$ denotes the space of ends of $\Sigma$ and the point at infinity of $(E\omega)^+$ is topologically distinguished, then the linear surface $\mathrm{Gr}_\mathfrak{L}(\Sigma)$ is called a \emph{telescope}. (This is not to be confused with the notion of a \emph{telescoping} surface from \cite[\S 3.2]{MannRafi}, which is quite different.)
\end{defn}

We note that not all linear surfaces are telescopes:

\begin{ex}
The \emph{flute surface} $\mathrm{Gr}_\mathfrak{L}(\bR^2) \cong \bR^2 \smallsetminus \bN$ is a telescope. In contrast, the Cantor tree surface $\mathrm{Gr}_\mathfrak{B}(\bS^2) \cong \mathrm{Gr}_\mathfrak{L}(\mathrm{Gr}_\mathfrak{B}(\bS^2))$ is a linear surface that is not a telescope, since $(\cC\omega)^+ \cong \cC$, which is homogeneous, so the point at infinity of $(\cC\omega)^+$ is not topologically distinguished.
\end{ex}

Now let $A$ and $X$ be connected surfaces with circle boundary and consider the sequence
\begin{equation}
\label{eq:stab-maps}
\cdots \to \Map(A \natural X^{\natural n}) \longrightarrow \Map(A \natural X^{\natural n+1}) \to \cdots
\end{equation}
of mapping class groups and stabilisation maps given by taking boundary connected sum with $X$ and extending homeomorphisms by the identity. We say that \eqref{eq:stab-maps} is \emph{homologically stable} if the $n$th map in the sequence induces isomorphisms on homology up to degree $f(n)$ and surjections on homology up to degree $f(n)+1$, for some diverging function $f \colon \bN \to \bN$.

\begin{athm}
\label{thm:hom-stab}
Let $A$ be any decorated surface. The sequence \eqref{eq:stab-maps} is homologically stable for
\begin{enumerate}[label=\textup{(\arabic*)}]
    \item $X$ equal to the punctured disc and $f(n) = \frac{n-2}{2}$;
    \item\label{case:torus} $X$ equal to the one-holed torus and $f(n) = \frac{n-3}{2}$;
    \item\label{case:telescope} $X$ equal to any telescope $\mathfrak{L}(\Sigma)$ and $f(n)= \frac{n-3}{2}$;
    \item\label{case:Cantor} $X$ equal to any binary tree surface $\mathfrak{B}(\Sigma)$ and $f(n) \equiv \infty$.
\end{enumerate}
\end{athm}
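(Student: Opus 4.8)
The plan is to deduce all four statements from the standard homological-stability machinery for automorphism groups --- the Quillen-style spectral-sequence argument, as systematised by Randal-Williams and Wahl and, in the surface setting, by Hatcher and Wahl --- the only genuinely new input being connectivity estimates for the associated destabilisation complexes. First I would set up the relevant mapping class groups as automorphism groups in the braided monoidal groupoid $(\cG,\natural,\bD^2)$ whose objects are decorated surfaces, whose morphisms are isotopy classes of decoration-preserving homeomorphisms, whose monoidal product is boundary connected sum along the marked boundary arcs, and whose braiding is the usual half-twist dragging one summand around another; thus $\Aut_\cG(A\natural X^{\natural n})=\Map(A\natural X^{\natural n})$ and the maps in \eqref{eq:stab-maps} are the canonical stabilisations $-\natural X$. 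One then forms the semi-simplicial set $W_n(A,X)_\bullet$ of destabilisations, and the machinery reduces homological stability to two points: (i) for each $p$, the group $\Map(A\natural X^{\natural n})$ acts transitively on the $p$-simplices of $W_n(A,X)_\bullet$ with stabiliser conjugate to $\Map(A\natural X^{\natural n-p-1})$, compatibly with the stabilisation maps; and (ii) $W_n(A,X)_\bullet$ is highly connected. Point (i) follows in all four cases from the change-of-coordinates principle for big mapping class groups, together with the classification of (possibly infinite-type) surfaces used to identify the complements of admissibly embedded copies of $X$; in case (4) one additionally notes the self-absorption $\mathfrak{B}(\Sigma)\natural\mathfrak{B}(\Sigma)\cong\mathfrak{B}(\Sigma)$, which holds by the classification because $\mathfrak{B}(\Sigma)$ and $\mathfrak{B}(\Sigma)\natural\mathfrak{B}(\Sigma)$ have homeomorphic end spaces (a Cantor-type compactification of the form $\Upsilon^\cC(\cdot)$ is unchanged by disjoint union with itself) and the same genus type. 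Granting (i), the stability range is governed entirely by the connectivity in (ii): a bound of the form ``$W_n(A,X)_\bullet$ is roughly $\tfrac{n}{2}$-connected'', with the precise constant depending on the case exactly as in the theorem, yields the stated finite ranges, while ``$W_n(A,X)_\bullet$ is contractible for all $n$'' yields $f(n)\equiv\infty$.

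The heart of the proof is therefore the connectivity of $W_n(A,X)_\bullet$, which up to weak equivalence is a complex whose $p$-simplices are isotopy classes of $p+1$ pairwise disjoint admissible embeddings ($\Emba$) of tethered copies of $X$ into $A\natural X^{\natural n}$. When $X$ is the punctured disc this is a complex of tethered points, $\tfrac{n-2}{2}$-connected; when $X$ is the one-holed torus it is a complex of tethered genus-one subsurfaces, whose connectivity is the bound established by Hatcher and Wahl. The only subtlety in these two cases is that $A$ may be of infinite type whereas the classical arguments concern finite-type surfaces; I would circumvent this by a locality argument, since a sphere mapped into such a complex has compact image meeting only finitely many of the standard pieces and hence factors through a finite-type subsurface, where the classical bound applies. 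When $X$ is a telescope $\mathfrak{L}(\Sigma)$, the complex consists of tethered embedded telescopes, and the new ingredient is that the point at infinity of $(E\omega)^{+}$ is topologically distinguished: I would use this to isotope any embedded telescope so that its distinguished end lands on a prescribed end of the ambient surface, which removes the end-space obstruction to making two embedded telescopes disjoint and then allows a ``bad simplices'' surgery argument in the style of Hatcher and Wahl to reduce the connectivity of the telescope complex to that of the already-treated tethered-point and tethered-torus complexes. When $X=\mathfrak{B}(\Sigma)$ is a binary tree surface, the complex consists of tethered embedded binary tree surfaces, and here I would exploit infinite divisibility: the complement of any finite pairwise disjoint admissibly embedded family of copies of $\mathfrak{B}(\Sigma)$ in $A\natural\mathfrak{B}(\Sigma)^{\natural n}$ still admits an admissibly embedded $\mathfrak{B}(\Sigma)$ --- again by the classification, since deleting finitely many clopen copies of $\Upsilon^\cC(\cdot)$ from $\Upsilon^\cC(\cdot)$ leaves a homeomorphic space --- so a fixed such ``cone'' embedding, together with a flag-type link argument, shows the complex is contractible.

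I expect the two new connectivity statements to be the main obstacle. Case (4) is conceptually the cleanest, but the cone/flag argument must be arranged so that the chosen cone embedding is disjoint from an arbitrary given simplex and compatible with all of its faces, and admissibility (non-separation into discs, and compatibility with the tethers and the decoration) must be tracked throughout. Case (3) is the genuinely hardest: converting the topologically-distinguished-end hypothesis into an explicit isotopy that positions that end, uniformly enough to drive a Hatcher--Wahl-type induction on $n$, is delicate, and the tethers must be carried along simultaneously. Finally, a routine point to record along the way is that $\Map(S)=\pi_0(\Homeo_\partial(S))$ behaves well in families, so that the groupoid $\cG$ and its actions on the complexes $W_n$ are honestly defined.
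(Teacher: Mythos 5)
Your overall strategy---set up a braided monoidal groupoid of decorated surfaces, run the Randal-Williams--Wahl machinery, and reduce everything to connectivity of a destabilisation complex---is the same as the paper's, and cases (1)--(2) including the compactness/locality argument for infinite-type $A$ match the paper closely. However, your handling of case (4) has a genuine gap, and there are two foundational issues worth flagging.

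First, for case (4) you claim that ``$W_n(A,X)_\bullet$ is contractible for all $n$'' and that this yields $f(n)\equiv\infty$. Neither half is right. The semi-simplicial set $W_n(A,X)_\bullet$ has $W_n(A,X)_p=\varnothing$ for $p\geq n$ (this is forced by the cancellation trick below: $\Hom_{\cU\bar{\cM}}(X^{\oplus p+1},A\oplus X^{\oplus n})=\varnothing$ once $p+1>n$), so it is at most $(n-1)$-dimensional and cannot be contractible. The paper instead introduces an auxiliary \emph{infinite} complex $TC_n^\infty(A,\mathfrak{B}(\Sigma))$, whose simplices allow arbitrarily many tethered $\mathfrak{B}(\Sigma)$-curves as long as some level ends survive; it is \emph{that} complex which is contractible, and only its $(n-2)$-skeleton agrees with $TC_n$. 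This gives $(n-3)$-connectivity of $TC_n$, not contractibility. And even granting very high connectivity, \cite[Theorem A]{RWW17} has a built-in slope (roughly $\tfrac{1}{2}$ with constant coefficients); it produces a range like $\tfrac{n-3}{2}$ growing with $n$, not $\infty$. The step that actually yields $f(n)\equiv\infty$ is the observation that $\mathfrak{B}(\Sigma)\natural\mathfrak{B}(\Sigma)\cong\mathfrak{B}(\Sigma)$, so the groups $\Map(A\natural\mathfrak{B}(\Sigma)^{\natural n})$ and the stabilisation maps between them are all \emph{identified} for $n\geq 1$. Once a single map in the sequence is a homology isomorphism up to degree $d$, they all are; since $d$ is unbounded in $n$, every map is a homology isomorphism in all degrees. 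You do record the self-absorption $\mathfrak{B}(\Sigma)\natural\mathfrak{B}(\Sigma)\cong\mathfrak{B}(\Sigma)$, but you use it only to identify simplex stabilisers, not to upgrade the stable range; that upgrade is the crucial missing step.

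Second, two foundational points. Your groupoid $(\cG,\natural,\bD^2)$ is not quite right as stated: $\bD^2$ is not a strict (or even natural) monoidal unit, and---more seriously---cancellation fails for infinite-type surfaces (e.g.\ $\mathfrak{B}(\Sigma)\natural\mathfrak{B}(\Sigma)\cong\mathfrak{B}(\Sigma)$), so the Quillen bracket construction does not directly produce a homogeneous category. The paper adjoins a formal unit $\bI$ and replaces $\cM$ with $\bar{\cM}$ (objects are words in the free monoid) to force cancellation; without something like this, property \textbf{H2} fails and the \cite{RWW17} machinery does not apply. Finally, you do not mention the shifting trick used in cases (2)--(4): the connectivity proved for $TC_n(A,X)$ is one off from what is needed (e.g.\ the torus case yields $\tfrac{g_A+n-3}{2}$ and fails when $g_A=0$), so the paper first replaces $A$ by $A\natural T_0$, $A\natural\mathfrak{L}(\Sigma)$, or $A\natural\mathfrak{B}(\Sigma)$ respectively, giving the ``shifted'' complexes (Corollary \ref{thm-conn-lins-shifted}, Theorem \ref{thm-conn-btree-shifted}) whose connectivity matches the ranges stated in Theorem \ref{thm:hom-stab}.

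Your sketch for case (3) is plausible but vaguer than the paper's argument: rather than ``reducing to the tethered-point and tethered-torus complexes,'' the paper introduces the tethered level curve complex $TLC_n(A,\Sigma)$, proves its $(n-2)$-connectivity directly via a fibering argument à la Hatcher--Vogtmann plus a bad-simplices induction, and then compares $TLC_n$ to $TC_n$ via a further bad-simplices argument. The topologically-distinguished-end hypothesis is used precisely to ensure that any embedded copy of $\mathfrak{L}(\Sigma)$ cut off by a bad simplex contains at most one level end, which controls the good links.
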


\begin{rem}
Case \ref{case:Cantor} of Theorem \ref{thm:hom-stab} is an ingredient in the proof of Theorem \ref{thm:disc-minus-Cantor}.
\end{rem}

\begin{proof}[Proof of Theorem \ref{thm:hom-stab}]
By Theorems \ref{thm:RWW} and \ref{thm:reduction-to-TC}, it suffices to prove that the simplicial complex $TC_n(A,X)$ is highly-connected. This connectivity result is then proved in Theorem \ref{thm-conn-fin}, Corollary \ref{thm-conn-lins-shifted} and Theorem \ref{thm-conn-btree-shifted}. Note that in case \ref{case:torus} one cannot directly apply Theorem \ref{thm:RWW} to get our stable range when $A$ has genus zero; in this case, we instead replace $A$ with $A' = A \natural T_0$ where $T_0$ is the one-holed torus. In cases \ref{case:telescope} and \ref{case:Cantor} we similarly replace $A$ with $A \natural \mathfrak{L}(\Sigma)$ or $A \natural \mathfrak{B}(\Sigma)$ respectively and then Theorem \ref{thm:RWW} implies that the $n$-th map in the sequence induces isomorphisms on homology up to degree $\frac{n-3}{2}$. One then notices that, for any surface $\Sigma$ without boundary, the binary tree surface $\mathfrak{B}(\Sigma)$ is homeomorphic to $\mathfrak{B}(\Sigma) \natural \mathfrak{B}(\Sigma)$. In particular, the groups $\Map(A \natural \mathfrak{B}(\Sigma)^{\natural n})$ are all isomorphic for $n\geq 1$ and the maps in the sequence \eqref{eq:stab-maps} are all identified in case \ref{case:Cantor}; it then follows that the stable range is infinite in this case, as claimed.
\end{proof}

\addtocontents{toc}{\SkipTocEntry}
\subsection*{Strategy.}

We prove Theorem \ref{thm:disc-minus-Cantor} using an adaptation of the concept of ``dissipated groups'', which we call \emph{homologically dissipated groups}, together with homological stability. The idea for the notion of homologically dissipated groups is inspired by an argument of Mather~\cite{Mather1971} and gives a criterion for the mapping class group of a space to be acyclic, the criterion being a certain kind of self-similarity and homological (auto-)stability (see also Remark \ref{rem:difference-in-hypotheses}). We then apply the machinery of \cite{RWW17} and adapt techniques of \cite{SW19} and \cite{SkipperWu2021} to prove the homological auto-stability hypothesis, which is item \ref{case:Cantor} of Theorem \ref{thm:hom-stab} with $A = \bD^2$. We use a number of different arguments to prove the other cases of Theorem \ref{thm:hom-stab}, in each case applying the machinery of \cite{RWW17} and studying the action of the mapping class group on appropriate tethered curve complexes on the surface. We expect that our methods can also be used to prove acyclicity of groups in other settings.

\addtocontents{toc}{\SkipTocEntry}
\subsection*{Outline.}

We begin in \S\ref{s:bg} with background on the classification of (second countable) surfaces, together with some homological stability tools recalled from \cite{RWW17,HatcherVogtmann2017,HW10} that we will need later. In \S\ref{s:Mather} we adapt an argument of Mather to give a sufficient criterion for acyclicity of mapping class groups whose key hypothesis is a \emph{homological auto-stability} property. We apply this to $\Map(\mathfrak{B}(\Sigma))$ in \S\ref{subsec:Mather-application}, completing the proof of Theorem \ref{thm:disc-minus-Cantor} modulo case \ref{case:Cantor} of Theorem \ref{thm:hom-stab}. In \S\ref{s:hs} we set up the appropriate pre-braided homogeneous category to apply the machinery of \cite{RWW17} to mapping class groups of infinite-type surfaces. The main theorem of \cite{RWW17} thus reduces Theorem \ref{thm:hom-stab} to proving high-connectivity for certain sequences of simplicial complexes, which we prove for each case of Theorem \ref{thm:hom-stab} in \S\ref{s:high-connectivity} (see Figure \ref{fig:structure-of-proof} for the outline of the proof).

\addtocontents{toc}{\SkipTocEntry}
\subsection*{Acknowledgements.} 
This work was initiated during the ``Big mapping class groups" reading seminar at Bielefeld in Spring 2021. We  thank the members of the reading group. MP was partially supported by a grant of the Romanian Ministry of Education and Research, CNCS - UEFISCDI, project number PN-III-P4-ID-PCE-2020-2798, within PNCDI III. XW is currently a member of LMNS and supported by a starter grant at Fudan University. He would like to thank Kai-Uwe Bux for his support when he was a postdoc at Bielefeld and many helpful discussions. He also thanks Javier Aramayona, Lvzhou Chen and Jonas Fleisig for discussions related to this project. After informing Lvzhou Chen of our results, he informed us recently of his on-going joint work with Danny Calegari and Nathalie Wahl on closely related topics.

\section{Background}\label{s:bg}

\subsection{Infinite-type surfaces}\label{s:inf-type}

All surfaces in this paper are assumed to be second countable, connected, orientable and to have compact (possibly empty) boundary. If the fundamental group of $S$ is finitely generated, we say that $S$ is of \emph{finite type}; otherwise, we say that $S$ is of \emph{infinite type}. Surfaces of finite type are classified by their genus, number of boundary components and number of punctures. For infinite-type surfaces, the corresponding classification was proven by von Ker\'ekj\'art\'o \cite{vKer23} and Richards \cite{Ri63}. To describe their classification, we first introduce some terminology. An \emph{end} of $S$ is an element of the set
\begin{equation}
\label{eq:inverse-limit}
\Ends(S) = \varprojlim \pi_0( S\smallsetminus K),
\end{equation}
where the inverse limit is taken over all compact sets $K \subset S$, directed with respect to inclusion. The \emph{Freudenthal compactification} of $S$ is the space
\[
\overline{S} = S \sqcup \Ends(S)
\]
equipped with the topology generated by the subsets $U \sqcup \{ e \in \Ends(S) \mid e<U \}$ for all open $U \subseteq S$, where $e<U$ means that there is a compact $K \subset S$ such that $U$ contains the component of $S \smallsetminus K$ hit by $e$ under the natural map $\Ends(S) \to \pi_0(S \smallsetminus K)$. In particular, taking the subspace topology, this equips $\Ends(S)$ with a topology, which coincides with the limit topology arising from the discrete topology on each of the terms defining the inverse limit \eqref{eq:inverse-limit}. With this topology, $\Ends(S)$ is homeomorphic to a closed subset of the Cantor set. An end $e \in \Ends(S)$ is called \emph{planar} if it has a neighbourhood (in the topology of $\overline{S}$) that embeds into the plane; otherwise it is {\em non-planar}. Denote the subspace of non-planar ends by $\Ends_{np}(S) \subseteq \Ends(S)$. This is a closed subset of $\Ends(S)$.

\begin{thm}[{\cite[\S 4.5]{Ri63}}]
Let $S_1,S_2$ be two surfaces with genera $g_1,g_2 \in \bN \cup \{\infty\}$ and with $b_1,b_2 \in \bN$ boundary components. Then $S_1$ is homeomorphic to $S_2$ if and only if $g_1 = g_2$, $b_1 = b_2$ and there is a homeomorphism of pairs
\[
(\Ends(S_1), \Ends_{np}(S_1)) \cong (\Ends(S_2), \Ends_{np}(S_2)).
\]
Conversely, given any $g \in \bN \cup \{\infty\}$, $b \in \bN$ and nested closed subsets of the Cantor set $X\subseteq Y$, where we assume that $g = \infty$ if and only if $X \neq \varnothing$, there is a surface $S$ of genus $g$ and with $b$ boundary components such that $(\Ends(S),\Ends_{np}(S)) \cong (Y,X)$.
\end{thm}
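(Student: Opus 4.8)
The plan is to establish the ``only if'' direction by topological invariance, reduce everything to the case $\partial S=\varnothing$, and then prove the remaining ``if'' statement together with the realisation claim by a normal-form-plus-back-and-forth argument. For necessity, note that a homeomorphism $h\colon S_1\to S_2$ preserves genus and the number of boundary circles, and extends to a homeomorphism of Freudenthal compactifications restricting to a homeomorphism $(\Ends(S_1),\Ends_{np}(S_1))\cong(\Ends(S_2),\Ends_{np}(S_2))$, since planarity of an end is an intrinsic property; this part is routine. To remove the boundary, I would cap each boundary circle with a disc: the resulting boundaryless surface $\widehat S$ has the same genus and the same end pair, and removing $b$ disjoint tame open discs from a boundaryless surface changes neither. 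Since any two ordered $b$-tuples of disjoint tame closed discs in a connected surface are carried to one another by an ambient homeomorphism, a homeomorphism $\widehat S_1\cong\widehat S_2$ may be arranged to match the deleted discs; so it suffices to treat boundaryless surfaces, in which case $b$ is irrelevant.

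Next I would put a boundaryless surface $S$ into a combinatorial normal form. Choose a compact exhaustion $S=\bigcup_n K_n$ with each $K_n$ a connected compact subsurface, $K_n\subset\mathrm{int}(K_{n+1})$, and every component of $S\smallsetminus K_n$ having noncompact closure, so that $\Ends(S)=\varprojlim\pi_0(S\smallsetminus K_n)$; after refining one may take each $\overline{K_{n+1}\smallsetminus K_n}$ to be a disjoint union of pairs of pants and one-holed tori. This records $S$ as a locally finite tree $\mathfrak T$ (vertices $=$ elementary pieces, edges $=$ gluing circles) decorated by where handles occur, with $\Ends(S)$ identified with the end space of $\mathfrak T$ and $\Ends_{np}(S)$ with the set of ends along which infinitely many handles accumulate; one checks this subspace is closed, and equals $\Ends(S)$ exactly when the total genus is infinite. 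The structural fact distilled from this analysis is that a connected surface with a single boundary circle is determined up to homeomorphism by its genus and its end pair, and that any identification of its boundary circle with that of a homeomorphic such surface extends to a homeomorphism of the whole surface.

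For the realisation statement, given nested closed subsets $X\subseteq Y$ of the Cantor set with $g=\infty\iff X\neq\varnothing$, I would realise $Y$ as the end space of a suitably pruned subtree of the infinite binary tree, attach to each vertex a holed sphere matching its incident edges, and, along every edge-ray converging to a point of $X$, insert a one-holed torus summand at each step; when $g<\infty$ I would instead attach exactly $g$ handles near the root and take $X=\varnothing$. Deleting $b$ disjoint open discs then yields, by the normal-form analysis, a surface of genus $g$ with $b$ boundary circles and end pair $(Y,X)$. For sufficiency, given $S_1,S_2$ with equal $(g,b)$ and a homeomorphism $\phi\colon(\Ends S_1,\Ends_{np}S_1)\to(\Ends S_2,\Ends_{np}S_2)$, I would build the homeomorphism by telescoping: inductively choose exhaustions $\{K_n^1\}$, $\{K_n^2\}$ and homeomorphisms $h_n\colon K_n^1\to K_n^2$ with $h_{n+1}|_{K_n^1}=h_n$, arranging at each stage that the clopen partitions of $\Ends S_i$ into the sets of ends trapped by the various components of $S_i\smallsetminus K_n^i$ correspond under $\phi$ and that matched components carry equal genus; each $h_n$ then extends over the complementary pieces by the uniqueness fact above, and $h=\bigcup_n h_n$ is a homeomorphism inducing $\phi$ on ends.

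The main obstacle is the inductive step in the telescoping argument: choosing the two exhaustions compatibly so that the partial homeomorphism always extends. This rests on (i) a uniqueness statement for the ``building block at infinity'' — for instance, that a connected surface with one boundary circle, infinite genus and a single (necessarily non-planar) end is unique, which itself requires an exhaustion argument — and (ii) careful bookkeeping ensuring that the clopen decompositions of $\Ends S_1$ and $\Ends S_2$ cut out by successive compact pieces mutually refine each other and, under $\phi$, converge; only this guarantees that the resulting homeomorphism matches the end pairs in the limit and not merely the underlying surfaces. Handling non-planar ends correctly — keeping the accumulation of genus synchronised on the two sides — is the subtle point throughout.
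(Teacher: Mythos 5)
The paper does not prove this theorem; it is stated with a citation to Richards and used as a black box, so there is no in-paper argument to compare against. Your sketch is a reconstruction of the classical Ker\'ekj\'art\'o--Richards argument -- reduce to the boundaryless case, a normal-form decomposition into a tree of pairs of pants and one-holed tori, realisation by decorating a pruned binary tree, and a telescoping back-and-forth for uniqueness -- and this is the right strategy; your flagging of the genus bookkeeping as the main obstacle is apt.

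Two points need repair. First, a factual slip: you write that $\Ends_{np}(S)$ ``equals $\Ends(S)$ exactly when the total genus is infinite.'' This is false -- puncture the Loch Ness monster surface at a point to get infinite genus with one planar and one non-planar end. The correct dichotomy, and the one encoded by the theorem's hypothesis ``$g=\infty$ iff $X\neq\varnothing$'', is that $\Ends_{np}(S)$ is \emph{nonempty} iff $S$ has infinite genus. Second, the sufficiency argument as written is circular. You state as a ``structural fact distilled from this analysis'' that a one-boundary surface is determined by its genus and end pair, with any boundary identification extending, and you then invoke this fact in the inductive step (``each $h_n$ then extends over the complementary pieces by the uniqueness fact above''). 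But that uniqueness is precisely the theorem in the case $b=1$; it cannot be an input to the induction. In the repaired argument, $h_{n+1}$ is obtained from $h_n$ by extending over the compact slab $\overline{K_{n+1}^1\smallsetminus K_n^1}$, matching component-by-component with the corresponding slab in $S_2$ via the classification of \emph{compact} surfaces with boundary rel the already-matched boundary circles, after arranging (this is where the end and genus bookkeeping lives) that the two slabs have the same combinatorics; then $h=\bigcup_n h_n$ is a homeomorphism provided $\bigcup_n K_n^i = S_i$. The noncompact complements are never matched directly -- that uniqueness is the output, not an ingredient.
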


\subsection{End spaces of linear and binary tree surfaces.}
\label{ss:end-spaces}

Let $E$ be a compact topological space and denote by $E\omega$ the disjoint union of countably infinitely many copies of $E$. We will consider two compactifications of $E\omega$, its one-point compactification $(E\omega)^+$ and its \emph{Cantor compactification} $(E\omega)^\cC$, which we introduce below. These are part of a larger family of compactifications of $E\omega$, one for every non-empty compact metrisable space $X$. To describe these, we first recall the classification of metrisable compactifications of the natural numbers $\bN$ with the discrete topology.

\begin{prop}[{cf.~\cite{Lorch1982} or \cite[Propositions~2.1~and~2.3]{Tsankov2006}}]
\label{prop:compactifications-of-N}
Every non-empty compact metrisable space can be realised uniquely as the boundary of a metrisable compactification of the natural numbers $\bN$ with the discrete topology. Precisely, sending a metrisable compactification of $\bN$ to its boundary defines a one-to-one correspondence between metrisable compactifications of $\bN$ up to homeomorphism of pairs and non-empty compact metrisable spaces up to homeomorphism.
\end{prop}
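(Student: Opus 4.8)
The plan is to prove both directions of the correspondence by hand, constructing a compactification from a space and recovering a space from a compactification, then checking these are mutually inverse. First I would prove \emph{existence}: given a non-empty compact metrisable space $X$, fix a metric $d$ on $X$ bounded by $1$ and choose a countable dense subset $\{x_n : n \in \bN\}$ of $X$ (possible since compact metrisable spaces are separable). Define a space $\widehat{\bN}_X$ whose underlying set is $\bN \sqcup X$, with $X$ carrying its given topology, each point of $\bN$ isolated, and a neighbourhood basis of a point $x \in X$ given by sets of the form $U \sqcup \{n \in \bN : d(x_n, x) < \varepsilon \text{ and } n > N\}$ for $U$ an open neighbourhood of $x$ in $X$, $\varepsilon > 0$, $N \in \bN$. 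One checks this is a compact Hausdorff topology in which $\bN$ is dense and open and discrete; metrisability follows because the space is second countable (both $X$ and $\bN$ are) and compact Hausdorff, hence metrisable by Urysohn. So $\widehat{\bN}_X$ is a metrisable compactification of $\bN$ with boundary $X$.

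Next I would prove \emph{uniqueness}: suppose $c_1 \colon \bN \hookrightarrow K_1$ and $c_2 \colon \bN \hookrightarrow K_2$ are two metrisable compactifications with homeomorphic boundaries, say via $\varphi \colon K_1 \smallsetminus c_1(\bN) \xrightarrow{\ \cong\ } K_2 \smallsetminus c_2(\bN)$. I want to extend $c_2 \circ c_1^{-1} \sqcup \varphi$ to a homeomorphism $K_1 \cong K_2$ of pairs. The key point is that a metrisable compactification of a discrete space $\bN$ is determined by the map $\bN \to K \smallsetminus c(\bN)$ recording, for each $n$, the (possibly empty set of) limit points of the sequence $(c(m))_{m \to \infty}$ near $c(n)$ — more precisely, by the family of convergent ultrafilters, or even more concretely by the filter of neighbourhoods: a sequence $n_k \to \infty$ in $\bN$ converges in $K$ iff its image does, and the limit lies in the boundary. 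One formalises this by noting that the closure operator on subsets of $\bN$ inside $K$ is what carries all the information, and this closure operator transports across $\varphi$. A clean way to package this: the inclusion $\bN \hookrightarrow K$ extends to a continuous surjection $\beta\bN \to K$ from the Stone–Čech compactification, and metrisable compactifications correspond to certain quotients; since $K$ is second countable, this quotient is determined by a closed equivalence relation on $\beta\bN$ that only depends, on the "boundary part" $\beta\bN \smallsetminus \bN$, on the boundary space $X$ together with the identification data — and here I would use that any homeomorphism of the boundaries is realised because $\bN$ is dense and the points of $\bN$ have no intrinsic labels to preserve. One then verifies the two constructions are mutually inverse: starting from $X$, the boundary of $\widehat{\bN}_X$ is $X$ by construction; starting from $K$, building $\widehat{\bN}_{\partial K}$ and comparing via the uniqueness argument gives $K \cong \widehat{\bN}_{\partial K}$ over $\bN$.

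The main obstacle I anticipate is the uniqueness statement, specifically making precise "a metrisable compactification of $\bN$ is determined by its boundary alone" without accidentally requiring extra structure. The subtlety is that the homeomorphism $\varphi$ of boundaries must be shown to extend, and a priori different ways of "attaching $\bN$" to the same boundary $X$ might give non-homeomorphic compactifications \emph{as pairs over $\bN$} even if homeomorphic as pairs; the proposition asserts this does not happen up to homeomorphism of pairs (not up to homeomorphism rel $\bN$), so I have freedom to permute $\bN$. The cleanest route is probably to show directly that \emph{every} metrisable compactification $K$ of $\bN$ with boundary $X$ is homeomorphic as a pair to the model $\widehat{\bN}_X$ for a suitable choice of dense sequence: given $K$, pick any dense sequence $\{y_n\}$ in $X \subseteq K$ (using that $\bN$ is dense in $K$, its closure is everything, so one can even take $y_n$ to be limits of subsequences of $\bN$), re-index $\bN$ so that $c(n)$ is close to $y_n$ in a suitable metric, and check the neighbourhood filters match up. This reduces uniqueness to a careful but routine verification that the topology on $K$ near a boundary point is forced by the metric data. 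I would cite \cite{Lorch1982} and \cite{Tsankov2006} for the clean statement and carry out only the parts needed for our applications, namely that the boundary can be any non-empty compact metrisable space and that the compactification is well-defined up to homeomorphism of pairs.
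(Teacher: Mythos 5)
The paper does not prove this proposition; it simply cites \cite{Lorch1982} and \cite[Propositions~2.1 and~2.3]{Tsankov2006}, so there is no in-text argument to compare against. Your existence construction is essentially correct: $\widehat{\bN}_X$ with the neighbourhood system you describe is a metrisable compactification of $\bN$ with boundary $X$. (A small caution: the sets $U\sqcup\{n:d(x_n,x)<\varepsilon,\ n>N\}$ are neighbourhoods of $x$ but need not themselves be open when $U$ is much larger than $B(x,\varepsilon)$, so one should verify the neighbourhood-system axioms directly rather than treating these sets as basic opens. After that, compactness, Hausdorffness, second countability and hence metrisability follow by routine arguments.)

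The genuine gap is in uniqueness, and it is concentrated precisely in the step you call a ``careful but routine verification''. Re-indexing $\bN$ so that $c(n)$ is close to $y_n$ is easy to do injectively, but the bijection you need for a homeomorphism of pairs requires a back-and-forth, and the surjectivity steps have no slack: when you are forced to insert a small $m$ into the range, you cannot choose any $n$ making $d_K(c(m),y_n)$ smaller than the fixed threshold $d_K(c(m),\partial K)>0$. The way out is to observe that as the back-and-forth proceeds the yet-unused $m$ tend to infinity, so $d_K(c(m),\partial K)\to 0$ and the errors still tend to $0$ overall; but turning this into a correct inductive construction, with error bounds tied to the uniform continuity of the boundary homeomorphism $\varphi$ so that $\sigma\sqcup\varphi$ extends to a continuous map (and hence, by compactness, a homeomorphism), \emph{is} the proof. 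That is exactly what \cite{Lorch1982} and \cite[\S 2]{Tsankov2006} carry out. The Stone--\v{C}ech detour you mention does not circumvent any of this: the same extension problem reappears there, on a far less tractable space. If you want a self-contained argument, carry out the back-and-forth explicitly; otherwise, defer to the cited references, as the paper does.
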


\begin{defn}
\label{defn:X-compactification}
Let $E$ be a compact space and let $X$ be a non-empty compact metrisable space. Denote by $\widetilde{X}$ the unique metrisable compactification of $\bN$ with boundary $X$ (Proposition \ref{prop:compactifications-of-N}). The \emph{$X$-compactification} $(E\omega)^X$ of $E\omega$ is defined to be the quotient of the product $E \times \widetilde{X}$ by the equivalence relation that partitions it into $\{e\} \times \{n\}$ and $E \times \{x\}$ for $n \in \bN$, $x \in X$ and $e \in E$; in other words, each $E \times \{x\}$ is collapsed to a point. Its underlying set is therefore naturally in bijection with $E\omega \sqcup X = (E \times \bN) \sqcup X$.
\end{defn}

\begin{ex}
When $X$ is a single point, $\widetilde{X}$ is the one-point compactification of $\bN$ and $(E\omega)^X = (E\omega)^+$ is the one-point compactification of $E\omega$.
\end{ex}

\begin{defn}
\label{defn:Cantor-compactification}
In the special case when $X = \cC$ is the Cantor set, we call $(E\omega)^\cC$ the \emph{Cantor compactification} of $E\omega$.
\end{defn}

\begin{rem}
The Cantor compactification may be made more concrete by giving a concrete description of $\widetilde{\cC}$, the unique metrisable compactification of $\bN$ with boundary $\cC$. This may be realised as the subspace of $[0,1]$ given by the union of the middle-thirds Cantor set $\cC \subset [0,1]$ and the set of midpoints of the intervals forming the components of $[0,1] \smallsetminus \cC$. Even more concretely, this consists of all real numbers whose ternary expansion is of the form $0.w$ or $0.xy$, where $w$ is an infinite sequence of $0$s and $2$s, $x$ is a \emph{finite} sequence of $0$s and $2$s and $y$ is an infinite sequence of $1$s.
\end{rem}

Let $\cE$ denote the collection of topological spaces that are compact, separable and totally disconnected. The one-point and Cantor compactifications of $E\omega$ for $E \in \cE$ give us two operations
\[
\Upsilon^+ \text{ and } \Upsilon^\cC \colon \cE \longrightarrow \cE ,
\]
where $\Upsilon^+(E) = (E\omega)^+$ and $\Upsilon^\cC(E) = (E\omega)^\cC$.
These correspond to the operations $\mathrm{Gr}_\mathfrak{L}(-)$ and $\mathrm{Gr}_\mathfrak{B}(-)$ on surfaces, taking a connected surface $\Sigma$ to its associated \emph{linear surface} and \emph{binary tree surface} respectively:
\[
\begin{tikzcd}
\mathrm{Surfaces} \ar[d,swap,"{\Ends}"] \ar[rr,"{\mathrm{Gr}_\mathfrak{L}(-)}"] && \mathrm{Surfaces} \ar[d,"{\Ends}"] && \mathrm{Surfaces} \ar[d,swap,"{\Ends}"] \ar[rr,"{\mathrm{Gr}_\mathfrak{B}(-)}"] && \mathrm{Surfaces} \ar[d,"{\Ends}"] \\
\cE \ar[rr,"{\Upsilon^+}"] && \cE && \cE \ar[rr,"{\Upsilon^\cC}"] && \cE
\end{tikzcd}
\]

\begin{rem}
\label{rmk:Cantor-compactifications}
The spaces $\Upsilon^\cC([0,\omega^\alpha])$ are non-homeomorphic for distinct countable ordinals $\alpha$. To see this, we extract the ordinal $\alpha$ as a topological invariant of $\Upsilon^\cC([0,\omega^\alpha])$. First, removing the condensation points (those points all of whose neighbourhoods are uncountable) of $\Upsilon^\cC([0,\omega^\alpha])$ results in the disjoint union of countably infinitely many copies of $[0,\omega^\alpha]$. The Cantor-Bendixson filtration of this disjoint union is simply the disjoint union of countably infinitely many copies of the Cantor-Bendixson filtration of $[0,\omega^\alpha]$, which has precisely $\alpha + 1$ steps. Thus $\alpha$ is a topological invariant of the collection of spaces of the form $\Upsilon^\cC([0,\omega^\alpha])$, so they are non-homeomorphic for distinct $\alpha$.

Since the homeomorphism type of $\Ends(S)$ is an invariant of a surface $S$, this implies that there are uncountably many different surfaces of the form $\mathfrak{B}(\Sigma)$, as pointed out in Remark \ref{rmk:uncountably-many-examples}. Namely, the surface $\mathfrak{B}(\bS^2 \smallsetminus [0,\omega^\alpha])$ has end space $\Upsilon^\cC([0,\omega^\alpha])$ and thus the surfaces $\mathfrak{B}(\bS^2 \smallsetminus [0,\omega^\alpha])$ are pairwise non-homeomorphic as $\alpha$ ranges over the (uncountably many) countable ordinals.
\end{rem}

\begin{rem}
The operations $\Upsilon^+$ and $\Upsilon^\cC$ are clearly functorial and send embeddings to embeddings. Using the topological characterisation of the Cantor set $\cC$, one may see that
\[
\Upsilon^+(\cC) \cong \cC \qquad\text{and}\qquad \Upsilon^\cC(\cC) \cong \cC .
\]
Thus, fixing a choice of such homeomorphisms, an embedding $E \hookrightarrow \cC$ determines embeddings $\Upsilon^+(E) \hookrightarrow \cC$ and $\Upsilon^\cC(E) \hookrightarrow \cC$.
\end{rem}

\subsection{Homogeneous categories and homological stability}\label{s:homogeneous-categories}

In this subsection, we review the basic definitions and results of homogeneous categories and refer the reader to \cite{RWW17} for more details.

\begin{defn}[{\cite[Definition 1.3]{RWW17}}]
A monoidal category $(\mathcal{C}, \oplus, 0)$ is called \emph{homogeneous} if $0$ is initial in $\mathcal{C}$ and if the following two properties hold.
\begin{inlinecond}{\textbf{H1}}
$\Hom(A,B)$ is a transitive $\Aut(B)$-set under post-composition.
\end{inlinecond}
\begin{inlinecond}{\textbf{H2}}
The map $\Aut(A) \rightarrow \Aut(A \oplus B)$ taking $f$ to $f \oplus \id_B$ is injective with image
\[
\Fix(B) := \{ \phi\in \Aut(A\oplus B) \mid \phi\circ (\imath_A\oplus \id_B) = \imath_A \oplus \id_B\}
\]
where $\imath_A\colon 0 \to A$ is the unique map.
\end{inlinecond}
\end{defn}

\begin{defn}[{\cite[Definition 1.5]{RWW17}}]
Let $(\mathcal{C}, \oplus, 0)$ be a monoidal category with $0$ initial. We say that $\cC$ is \emph{prebraided} if its underlying groupoid is braided and for each pair of objects $A$ and $B$ in $\cC$, the groupoid braiding $b_{A,B} \colon A \oplus B \to B\oplus A$ satisfies
\[
b_{A,B} \circ (\id_A \oplus \imath_B) = \imath_B\oplus \id_A \colon A \longrightarrow B\oplus A.
\]
\end{defn}

The homogeneous category we work with later will arise from a braided monoidal groupoid using Quillen's bracket construction \cite[p.~219]{Grayson1976}. Let $(\cG,\oplus,0)$ be a monoidal groupoid. One defines a new category $U\cG$ as follows: $U\cG$ has the same objects as $\cG$ and a morphism in $U\cG$ from $A$ to $B$ is an equivalence class of pairs $(X,f)$ where $X$ is an object of $\cG$ and $f \colon X \oplus A \to B$ is a morphism in $\cG$, and where $(X,f) \sim (X',f')$ if there exists an isomorphism $g \colon X\to X'$ in $\cG$ making the diagram 
\[
\begin{tikzcd}
X \oplus A \ar[r,"f"] \ar[d,swap,"{g \oplus \id_A}"] & B \\
X' \oplus A \ar[ur,swap,"{f'}"]
\end{tikzcd}
\]
commute. We write $[X,f]$ for such an equivalence class.

\begin{defn}
For a pair of objects $(A,X)$ in a monoidcal category $(\cG,\oplus,0)$ we say that $\cG$ satisfies \emph{cancellation} if, for all objects $A,B,C\in \cG$, $A\oplus C\cong B\oplus C$ implies $A\cong B$.
\end{defn}

\begin{thm}[{\cite[Proposition 1.8 and Theorem 1.10]{RWW17}}]
\label{thm-grpd-hgca}
Let $(\cG,\oplus,0)$ be a braided monoidal groupoid with no zero divisors. Then $U\cG$ is a pre-braided monoidal category. If in addition
\begin{enumerate}
    \item\label{cond-cancellation} $\cG$ satisfies cancellation,
    \item\label{cond-injectivity} $\Aut_\cG(A) \to \Aut_\cG(A\oplus B), f \mapsto f\oplus \id_B$ is injective for all $A,B$ in $\cG$,
\end{enumerate}
then $U\cG$ is a homogeneous category.
\end{thm}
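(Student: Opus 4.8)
The plan is to follow the two-step structure of \cite{RWW17}: first show that $U\cG$ is a pre-braided monoidal category with $0$ initial, then verify the homogeneity axioms \textbf{H1} and \textbf{H2} using the two extra hypotheses.

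First I would equip $U\cG$ with a monoidal structure. On objects it is $\oplus$ as in $\cG$; on morphisms I set $[X,f]\oplus[Y,g]:=[X\oplus Y,\,(f\oplus g)\circ(\id_X\oplus b_{Y,A}\oplus\id_C)]$ for $[X,f]\colon A\to B$ and $[Y,g]\colon C\to D$, where $b$ is the braiding of $\cG$, used to reorder $X\oplus Y\oplus A\oplus C$ into $X\oplus A\oplus Y\oplus C$. The routine verifications — independence of the chosen representatives, functoriality, and the pentagon and unit coherences — all reduce to the corresponding facts in $\cG$ together with the hexagon axioms and naturality of $b$. That $0$ is initial is immediate: a morphism $0\to B$ is a class $[X,f]$ with $f\colon X\cong B$, and any two such are related by $g=f'^{-1}\circ f$, so this morphism set is a singleton. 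One then defines the braiding of $U\cG$ by $b_{A,B}:=[0,b^{\cG}_{A,B}]$ and checks the pre-braided identity $b_{A,B}\circ(\id_A\oplus\imath_B)=\imath_B\oplus\id_A$, which unwinds to an equation already holding in $\cG$. The hypothesis that $\cG$ has no zero divisors enters here: it implies that a morphism $[X,f]$ of $U\cG$ is invertible if and only if $X\cong 0$ — the nontrivial direction being that an inverse $[Y,g]$ produces a representative of $[Y,g]\circ[X,f]=[0,\id]$ of the form $[Y\oplus X,-]$, forcing $Y\oplus X\cong 0$ and hence $X\cong 0$. Consequently the underlying groupoid of $U\cG$ is (equivalent to) $\cG$, so in particular $\Aut_{U\cG}(A)=\Aut_\cG(A)$ for every object $A$.

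For \textbf{H1}, given $[X,f],[X',f']\colon A\to B$, the isomorphisms $f\colon X\oplus A\to B$ and $f'\colon X'\oplus A\to B$ give $X\oplus A\cong B\cong X'\oplus A$, so cancellation (hypothesis (1)) yields an isomorphism $g\colon X\to X'$ in $\cG$. Then $\phi:=f\circ(g^{-1}\oplus\id_A)\circ f'^{-1}$ is an automorphism of $B$, and $\phi\circ[X',f']=[X',\phi\circ f']=[X,f]$ (with $g^{-1}$ as comparison isomorphism); hence $\Hom_{U\cG}(A,B)$ is a transitive $\Aut(B)$-set under post-composition. For \textbf{H2}, under the identification $\Aut_{U\cG}=\Aut_\cG$ the map $f\mapsto f\oplus\id_B$ is exactly the map of hypothesis (2), hence injective. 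For its image, one computes $\imath_A\oplus\id_B=[A,\id_{A\oplus B}]\colon B\to A\oplus B$; then for $\phi\in\Aut_\cG(A\oplus B)$ one gets $\phi\circ(\imath_A\oplus\id_B)=[A,\phi]$, so $\phi\in\Fix(B)$ precisely when $[A,\phi]=[A,\id_{A\oplus B}]$, i.e.\ when $\phi=h\oplus\id_B$ for some $h\in\Aut_\cG(A)$ — which is exactly the image of the map. This establishes \textbf{H2}, and together with the first step it shows $U\cG$ is homogeneous.

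The hard part will be the first step: setting up the monoidal structure on $U\cG$ and checking its coherence is a genuine if unglamorous diagram chase through the braiding axioms, and one must be careful in expressing the canonical morphisms $\imath_A$, $\imath_A\oplus\id_B$ and $\id_A\oplus\imath_B$ in terms of explicit representatives $[X,f]$, since the computations of $\Aut$- and $\Hom$-sets in the second step rely on getting these expressions exactly right.
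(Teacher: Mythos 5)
The paper does not prove this theorem: it is stated and cited verbatim from \cite[Proposition 1.8 and Theorem 1.10]{RWW17} as a background result and is used here as a black box, so there is no in-paper proof to compare against. That said, your blind reconstruction is a correct sketch of the argument found in \cite{RWW17} and follows the natural two-step route: set up the monoidal and pre-braided structure on $U\cG$, use the no-zero-divisors hypothesis to pin down the invertible morphisms, derive \textbf{H1} from cancellation via the explicit automorphism $\phi = f\circ(g^{-1}\oplus\id_A)\circ f'^{-1}$, and derive \textbf{H2} from the computations $\imath_A\oplus\id_B=[A,\id_{A\oplus B}]$ and $\phi\circ(\imath_A\oplus\id_B)=[A,\phi]$, which identify $\Fix(B)$ with $\{h\oplus\id_B : h\in\Aut_\cG(A)\}$.

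Two details deserve correction or at least explicit attention. First, your conventions for the monoidal structure and the braiding on $U\cG$ are incompatible: with the reordering $\id_X\oplus b_{Y,A}\oplus\id_C$ and the braiding $b^{U\cG}_{A,B}=[0,b^{\cG}_{A,B}]$, the left-hand side of the pre-braided identity unwinds to $[B,\, b^{\cG}_{A,B}\circ b^{\cG}_{B,A}]\colon A\to B\oplus A$, which agrees with $\imath_B\oplus\id_A=[B,\id_{B\oplus A}]$ only when the double braiding $b^{\cG}_{A,B}\circ b^{\cG}_{B,A}$ factors as $h\oplus\id_A$ --- false in general (e.g.\ for braid groupoids). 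In \cite{RWW17} the reordering is taken to be $\id_X\oplus b^{-1}_{A,Y}\oplus\id_C$, in which case $\id_A\oplus\imath_B=[B,b^{-1}_{A,B}]$ and the identity reduces to $b_{A,B}\circ b^{-1}_{A,B}=\id$, which does hold. Second, the identification $\Aut_{U\cG}(A)=\Aut_\cG(A)$ that you invoke in the \textbf{H2} step needs, in addition to the no-zero-divisors hypothesis, that $\Aut_\cG(0)$ be trivial; otherwise $[0,f]=[0,g]$ only forces $f$ and $g$ to differ by the image of an automorphism of $0$. This is \cite[Proposition 1.7]{RWW17} (cited later in the paper as part of Lemma \ref{lem:underlying-groupoid}), and in the present paper's application it holds because $\bI$ is formally adjoined with $\mathrm{End}_{\cM}(\bI)=\{\id_\bI\}$. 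Your argument should either invoke this additional hypothesis or rephrase \textbf{H2} to avoid leaning on the identification.
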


\begin{defn}[{\cite[Definition 2.1]{RWW17}}]
\label{defn-W_n(A,X)}
Let $(\cC,\oplus,0)$ be a monoidal category with $0$ initial and let $(A,X)$ be a pair of objects of $\cC$. Define $W_n(A,X)_\bullet$ to be the semi-simplicial set with set of $p$-simplices
\[
W_n(A,X)_p := \Hom_\cC(X^{\oplus p+1}, A\oplus X^{\oplus n})
\]
and with face maps
\[
d_i \colon \Hom_\cC(X^{\oplus p+1}, A \oplus X^{\oplus n}) \longrightarrow \Hom_\cC (X^{\oplus p}, A \oplus X^{\oplus n})
\]
defined by precomposing with $\id_{X^{\oplus i}} \oplus \imath_X \oplus \id_{X^{\oplus p-i}}$.
\end{defn}

For connectivity questions, one can often replace this semi-simplicial set by its associated simplicial complex. (This will be the case in our setting; see Proposition \ref{prop:from-W-to-S}.)

\begin{defn}[{\cite[Definition 2.8]{RWW17}}]
\label{defn:Sn(X,A)}
Let $(A,X)$ be a pair of objects of a homogeneous category $(\mathcal{C}, \oplus,0)$ and $n \geq 1$. Define $S_n(A,X)$ to be the simplicial complex whose vertices are morphisms $f\colon X \to A\oplus X^{\oplus n}$ and whose $p$-simplices are $(p+1)$-tuples $(f_0, \ldots, f_p)$ of such morphisms such that there exists a morphism $f\colon X^{\oplus p+1} \to A\oplus X^{\oplus n}$ with $\{f \circ i_0, \ldots, f \circ i_p \} = \{f_0,\ldots,f_p\}$ as unordered sets, where
\[
i_j = \imath_{X^{\oplus j}} \oplus \id_X \oplus \imath_{X^{\oplus p-j}}  \colon X = 0 \oplus X \oplus 0 \longrightarrow X^{\oplus p+1}.
\]
\end{defn}

\subsection{High-connectivity}
\label{s:high-conn-bg}

We now review some of the tools that we will use to prove our connectivity results. More details may be found in \cite[\S 2]{HatcherVogtmann2017} and \cite[\S 3]{HW10}.

\subsubsection{Bad simplices arguments}
\label{sub-bad-sim}

Let $X$ be a simplicial complex and $Y \subseteq X$ a subcomplex. We want to relate the $n$-connectedness of $Y$ to the $n$-connectedness of $X$ via an argument known as a ``bad simplices argument''; see for example \cite[\S 2.1]{HatcherVogtmann2017} for more details. One first chooses a set of simplices in $X \smallsetminus Y$, called \emph{bad simplices}, satisfying the following two conditions:

\begin{enumerate}
\item\label{bad-sim-1} Any simplex with no bad faces lies in $Y$. Here a ``face'' of a simplex $\sigma$ means the simplex spanned by any non-empty subset of its vertices, including $\sigma$ itself.
\item\label{bad-sim-2} If two faces of a simplex of $X$ are bad, then their join (which is also a simplex of $X$) is also bad.
\end{enumerate}

Simplices that have no bad faces are called \emph{good simplices}. In this terminology, condition \eqref{bad-sim-1} says that all good simplices lie in $Y$. The faces of a bad simplex may be good, bad or neither. There is also a relative notion for simplices in the link of a bad simplex $\sigma$, as follows: a simplex $\tau$ in $\Lk(\sigma)$ is called \emph{good for $\sigma$} if any bad face of $\tau \ast \sigma$ is contained in $\sigma$. The simplices that are good for $\sigma$ form a subcomplex of $\Lk(\sigma)$ denoted by $G_{\sigma}$; this is called the \emph{good link} of $\sigma$.

\begin{prop}[{\cite[Proposition 2.1]{HatcherVogtmann2017}}]
\label{prop-bad-sim}
Let $Y$ be a subcomplex of a simplicial complex $X$, and suppose that we have chosen a collection of bad simplices satisfiying conditions \eqref{bad-sim-1} and \eqref{bad-sim-2} above. Fix $n\geq 0$ and suppose that $G_\sigma$ is $(n-\text{dim}(\sigma)-1)$-connected for each bad simplex~$\sigma$. Then the pair $(X,Y)$ is $n$-connected, i.e., $\pi_i(X,Y) = 0$ for all $i\leq n$. 
\end{prop}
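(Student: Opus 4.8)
Proof plan for Proposition 2.14 (the "bad simplices argument", stated as \cite[Proposition 2.1]{HatcherVogtmann2017}).

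The plan is to prove that $\pi_i(X,Y)=0$ for $i\le n$ by a standard simplicial approximation and compression argument, pushing relative cycles off the bad simplices into $Y$. First I would let $f\colon (D^i,\partial D^i)\to (X,Y)$ represent an arbitrary class in $\pi_i(X,Y)$ with $i\le n$; after simplicial approximation with respect to some triangulation $T$ of $D^i$, we may assume $f$ is simplicial and that $f(\partial D^i)\subseteq Y$. The aim is to homotope $f$ rel $\partial D^i$ until its image lies entirely in $Y$. By condition \eqref{bad-sim-1} it suffices to eliminate, one at a time, the simplices $\sigma$ of $X$ that are bad and lie in the image of $f$; a good induction parameter is the dimension of $\sigma$, handled in decreasing order (so that we deal with the highest-dimensional bad simplices in the image first, since condition \eqref{bad-sim-2} guarantees that the bad simplices contained in any given simplex of $X$ have a well-defined maximal dimension and in fact a unique maximal one — their join is bad, hence their "span" is a bad simplex).

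The core local move is the following. Fix a bad simplex $\sigma$ of maximal dimension $d$ among the bad simplices in $\mathrm{im}(f)$, and let $P=f^{-1}(\sigma)\subseteq D^i$. Because $\sigma$ is a simplex and $f$ is simplicial, $P$ is a subcomplex of $T$ which is a combinatorial manifold-with-boundary of dimension $\le i$, and $f$ carries the "link" of $P$ in $D^i$ into $\Lk_X(\sigma)$; moreover, by the choice of $\sigma$ as maximal, this link data actually lands in the good link $G_\sigma$ (any bad face of $\tau\ast\sigma$ for $\tau$ in the relevant link would, by maximality and \eqref{bad-sim-2}, be forced to sit inside $\sigma$). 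Now the hypothesis says $G_\sigma$ is $(n-d-1)$-connected, and the relevant spheres arising as links of the interior cells of $P$ have dimension $\le i-d-1\le n-d-1$; hence every such sphere bounds a disc in $G_\sigma$. Using these fillings one constructs, cell by cell over $P$, a homotopy of $f$ (rel $\partial D^i$, and without touching the preimages of other bad simplices) that removes $\sigma$ from the image while creating no new bad simplices of dimension $\ge d$ and strictly decreasing the number of bad simplices of dimension $d$ in the image. A downward induction on $d$, with an inner induction on the number of top-dimensional bad simplices in the image, then terminates with $\mathrm{im}(f)$ containing no bad simplices at all, hence $\mathrm{im}(f)\subseteq Y$ by \eqref{bad-sim-1}. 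This shows $\pi_i(X,Y)=0$.

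The main obstacle is making the local compression move precise: one has to organise the combinatorics so that (i) "$f^{-1}(\sigma)$ together with its link in $D^i$" is correctly identified as a disc-with-boundary pattern that can be filled using connectivity of $G_\sigma$, (ii) the numerical bookkeeping $\dim(\text{link sphere}) = i - \dim(\sigma) - 1 \le n - \dim(\sigma) - 1$ works out (this is exactly where the hypothesis "$G_\sigma$ is $(n-\dim(\sigma)-1)$-connected" is calibrated), and (iii) the replacement genuinely introduces no new bad simplices of dimension $\ge \dim(\sigma)$ — which is where condition \eqref{bad-sim-2} is used, since any newly-created bad simplex would have to have a bad face meeting the new filling, and \eqref{bad-sim-2} constrains how bad faces can combine. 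Since this is precisely \cite[Proposition 2.1]{HatcherVogtmann2017}, I would simply cite that reference for the full details rather than reproduce the compression argument here; the proof above is the standard one and I merely outline it for completeness.
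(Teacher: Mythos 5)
The paper does not prove Proposition~\ref{prop-bad-sim}; it only states it and cites \cite[Proposition~2.1]{HatcherVogtmann2017}. Your decision to outline the compression argument and then defer to that reference is therefore perfectly consistent with the paper's treatment, and your outline is, in its essentials, the correct Hatcher--Vogtmann argument.

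Two small imprecisions are worth flagging. First, $P=f^{-1}(\sigma)$ is a full subcomplex of the triangulated disc $D^i$, but it need not be a combinatorial manifold-with-boundary, and the argument does not need it to be: what it uses is that $D^i$ itself is a manifold, so the link in $D^i$ of any simplex $\tau$ of the triangulation is a sphere (or a disc, if $\tau$ meets $\partial D^i$). Accordingly, the compression is carried out one simplex at a time: one picks $\tau$ \emph{maximal} among simplices of the triangulation with $f(\tau)=\sigma$, checks that $f|_{\Lk(\tau)}$ lands in $G_\sigma$ (this is where $\sigma$ being of maximal dimension among bad simplices in the image, together with condition~\eqref{bad-sim-2}, is used, exactly as you indicate), fills the resulting sphere of dimension $i-\dim\tau-1\le n-\dim\sigma-1$ in $G_\sigma$ using the connectivity hypothesis, and modifies $f$ on $\mathrm{St}(\tau)$. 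Second, the dimension count works because $\dim\tau\ge\dim\sigma$ (as $f$ is simplicial and $f(\tau)=\sigma$) and $i\le n$; stating the inequality via $\dim\tau$ rather than directly via $\dim\sigma$ makes the calibration with the hypothesis transparent. With these adjustments your sketch is accurate.
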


An immediate consequence is:

\begin{thm}[{\cite[Corollary 2.2]{HatcherVogtmann2017}}]
\label{thm--bad-sim}
Let $Y$ be a subcomplex of a simplicial complex $X$, and suppose that we have chosen a collection of bad simplices satisfiying conditions \eqref{bad-sim-1} and \eqref{bad-sim-2} above. Then:
\begin{enumerate}
\item If $X$ is $n$-connected and $G_\sigma$ is $(n-\text{dim}(\sigma))$-connected for each bad simplex $\sigma$, then $Y$ is $n$-connected.
\item If $Y$ is $n$-connected and $G_\sigma$ is $(n-\text{dim}(\sigma)-1)$-connected for each bad simplex $\sigma$, then $X$ is $n$-connected.
\end{enumerate}
\end{thm}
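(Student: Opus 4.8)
The plan is to derive both statements formally from Proposition \ref{prop-bad-sim} together with the long exact sequence in homotopy of the pair $(X,Y)$, based at a point of $Y$. Throughout, I read ``$m$-connected'' as ``nonempty, path-connected, and $\pi_i = 0$ for $1 \le i \le m$'' (and vacuously true for $m \le -1$).

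First I would dispatch part (2). The hypothesis there is exactly the hypothesis of Proposition \ref{prop-bad-sim}, so that proposition immediately gives $\pi_i(X,Y) = 0$ for all $i \le n$. Since $Y$ is $n$-connected with $n \ge 0$, it is nonempty, so we may base everything in $Y$; the relevant segment of the long exact sequence,
\[
\pi_i(Y) \longrightarrow \pi_i(X) \longrightarrow \pi_i(X,Y),
\]
then has both outer terms vanishing for $1 \le i \le n$, whence $\pi_i(X) = 0$ in that range. Nonemptiness of $X$ is clear ($Y \subseteq X$), and path-connectedness of $X$ follows from $\pi_0(X,Y) = 0$ together with path-connectedness of $Y$. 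Hence $X$ is $n$-connected.

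Next I would handle part (1), where the hypothesis on the good links is one degree stronger: $G_\sigma$ is $(n - \dim(\sigma))$-connected, i.e.\ $\bigl((n+1) - \dim(\sigma) - 1\bigr)$-connected. So Proposition \ref{prop-bad-sim} applies with $n$ replaced by $n+1$, giving $\pi_i(X,Y) = 0$ for all $i \le n+1$. Now the relevant segment of the long exact sequence is
\[
\pi_{i+1}(X,Y) \longrightarrow \pi_i(Y) \longrightarrow \pi_i(X),
\]
and for $i \le n$ the left term vanishes (as $i+1 \le n+1$) while the right term vanishes since $X$ is $n$-connected; thus $\pi_i(Y) = 0$ for $1 \le i \le n$. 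For the $\pi_0$-level statement: $X$ nonempty together with $\pi_0(X,Y) = 0$ forces $Y$ nonempty, and $\pi_1(X,Y) = 0$ together with path-connectedness of $X$ forces path-connectedness of $Y$. Hence $Y$ is $n$-connected.

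The only genuinely delicate point is the low-dimensional bookkeeping: choosing basepoints in $Y$, and checking that the relative vanishing statements $\pi_0(X,Y)=0$ (and, in part (1), also $\pi_1(X,Y)=0$) transfer nonemptiness and path-connectedness in the correct direction between $X$ and $Y$. I do not expect any real obstacle here; all the substantive content is in Proposition \ref{prop-bad-sim}, which we are free to assume, and the remainder is a formal diagram chase with the degree shift in part (1) being the only thing to watch.
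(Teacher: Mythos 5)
Your proof is correct, and it is precisely the standard derivation the paper has in mind when it introduces the result with ``An immediate consequence is:'' (the paper itself gives no argument, deferring to \cite{HatcherVogtmann2017}). Both parts follow from Proposition \ref{prop-bad-sim} and the long exact sequence of the pair $(X,Y)$, with the degree shift in part (1) correctly accounted for; your low-degree bookkeeping for $\pi_0$ and $\pi_1$ is also right.
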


\subsubsection{Complete joins}
\label{subsec-com-join}

The notion of a \emph{complete join} is another useful tool, introduced by Hatcher and Wahl in \cite[\S 3]{HW10}, for proving high-connectivity results. We recall the key definitions and results here.

\begin{defn}
\label{defn:complete-join}
A surjective map of simplicial complexes $\pi \colon Y \to X$ is called a \emph{complete join} if it satisfies the following properties:
\begin{enumerate}
\item\label{complete-join-injective} $\pi$ is injective on individual simplices.
\item\label{complete-join-lifting} For each $p$-simplex $\sigma = \langle v_0,\ldots,v_p \rangle$ of $X$, the subcomplex $Y(\sigma)$ of $Y$ consisting of all $p$-simplices that project under $\pi$ to $\sigma$ is equal to the join $\pi^{-1}(v_0) \ast \pi^{-1}(v_1) \ast \cdots \ast \pi^{-1}(v_p)$, where each $\pi^{-1}(v_i)$ is considered as a $0$-dimensional subcomplex of $Y$.
\end{enumerate}
Condition \eqref{complete-join-lifting} in particular asserts the \emph{existence} of the join $\pi^{-1}(v_0) \ast \pi^{-1}(v_1) \ast \cdots \ast \pi^{-1}(v_p)$ in $Y$. Another way of rephrasing condition \eqref{complete-join-lifting} is the statement that lifts of $\sigma$ to $p$-simplices of $Y$ are in one-to-one correspondence with $(p+1)$-tuples consisting of a choice of lift of $v_i$ to a vertex of $Y$ for each $0\leq i\leq p$. More succinctly: simplices of $X$ may be lifted to $Y$ by arbitrarily and independently choosing lifts of each vertex.
\end{defn}

\begin{defn}
A simplicial complex $X$ is called \emph{weakly Cohen-Macaulay} (sometimes shortened to \emph{wCM}) of dimension $n$ if $X$ is $(n-1)$-connected and the link of each $p$-simplex of $X$ is $(n-p-2)$-connected.
\end{defn}

The key result that we shall use for complete joins is the following.

\begin{prop}[{\cite[Proposition 3.5]{HW10}}]
\label{prop-join-conn}
If $Y$ is a complete join over a wCM complex $X$ of dimension $n$, then $Y$ is also wCM of dimension $n$.
\end{prop}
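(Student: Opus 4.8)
The plan is to establish the two conditions in the definition of weakly Cohen--Macaulay (wCM) of dimension $n$ for $Y$: that $Y$ is $(n-1)$-connected, and that $\Lk_Y(\tilde\sigma)$ is $(n-p-2)$-connected for every $p$-simplex $\tilde\sigma$ of $Y$. I would argue by induction on $n$; the base case $n=0$ is immediate, since $\pi$ is surjective and hence $Y$ is non-empty whenever $X$ is.

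For the link condition in the inductive step, the key is a recursive structure. If $\tilde\sigma = \langle \tilde v_0,\dots,\tilde v_p\rangle$ is a $p$-simplex of $Y$ then $\sigma := \pi(\tilde\sigma) = \langle v_0,\dots,v_p\rangle$ is a $p$-simplex of $X$, because $\pi$ is injective on individual simplices \eqref{complete-join-injective}; and I claim the restriction $\pi\colon \Lk_Y(\tilde\sigma) \to \Lk_X(\sigma)$ is again a complete join. Injectivity on simplices is inherited, and both surjectivity and the lifting property \eqref{complete-join-lifting} for this restriction follow by applying \eqref{complete-join-lifting} to the simplices $\langle v_0,\dots,v_p,w_0,\dots,w_q\rangle$ of $X$ with $\langle w_0,\dots,w_q\rangle \in \Lk_X(\sigma)$: the lifts of such a simplex that restrict to $\tilde\sigma$ on the first $p+1$ vertices are precisely the independent choices of lift $\tilde w_j \in \pi^{-1}(w_j)$. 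Now $X$ being wCM of dimension $n$ implies that $\Lk_X(v)$ is wCM of dimension $n-1$ for each vertex $v$, so by the inductive hypothesis $\Lk_Y(\tilde v)$ is wCM of dimension $n-1$ for each vertex $\tilde v$ of $Y$; writing $\tilde\sigma = \tilde v_0 \ast \langle \tilde v_1,\dots,\tilde v_p\rangle$ and using $\Lk_Y(\tilde\sigma) = \Lk_{\Lk_Y(\tilde v_0)}(\langle \tilde v_1,\dots,\tilde v_p\rangle)$ gives the desired $(n-p-2)$-connectivity of $\Lk_Y(\tilde\sigma)$.

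It then remains to show that $Y$ is $(n-1)$-connected, which I expect to be the heart of the argument. Given $\phi\colon S^j \to \lvert Y\rvert$ with $j \le n-1$, simplicial approximation lets us take $\phi = \lvert f\rvert$ for a simplicial map $f\colon \Sigma \to Y$ out of a triangulation $\Sigma$ of $S^j$. Then $\pi\circ f\colon \Sigma \to X$ is simplicial and, since $X$ is $(n-1)$-connected and $j\le n-1$, null-homotopic, so it extends to a simplicial map on a triangulated $D^{j+1}$ with boundary $\Sigma$. Here I would use the wCM hypothesis on $X$ more fully: by the wCM condition the link of a $p$-simplex of $X$ is $(n-p-2)$-connected, hence non-empty for $p \le n-1$, so every simplex of $X$ of dimension $\le n-1$ is a face of an $n$-simplex; this provides the room for a relative general-position argument homotoping the above extension rel $\Sigma$, and subdividing, so as to obtain a simplicial map $g\colon \Delta \to X$ (with $\partial\Delta = \Sigma$, up to subdividing $\Sigma$) that is \emph{non-degenerate} on every simplex not contained in $\Sigma$, i.e.\ injective on its vertex set. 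One then lifts $g$ through $\pi$ by setting $\tilde g = f$ on $\Sigma$ and choosing an arbitrary lift $\tilde g(v) \in \pi^{-1}(g(v))$ for each remaining vertex $v$. This $\tilde g\colon \Delta \to Y$ is simplicial: on a simplex contained in $\Sigma$ it is $f$, while on any other simplex $\rho$ the vertices $g(v)$ for $v\in\rho$ are pairwise distinct and span a simplex $\tau$ of $X$, so the $\tilde g(v)$ form one lift of each vertex of $\tau$ and hence span a simplex of $Y$ by \eqref{complete-join-lifting}. Thus $\lvert\tilde g\rvert$ extends $\phi$ over $D^{j+1}$ and $\pi_j(\lvert Y\rvert) = 0$.

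The step I expect to be the main obstacle is the one just flagged: arranging a filling $g$ of $\pi\circ f$ that is non-degenerate away from the boundary sphere while still matching $f$ on it. This is a relative general-position statement, and it is exactly here that the full wCM hypothesis on $X$ is needed rather than mere $(n-1)$-connectivity --- indeed, the bare implication ``a complete join over an $(n-1)$-connected complex is $(n-1)$-connected'' is false (e.g.\ two points mapping to one point). Granting such a $g$, the complete-join axiom \eqref{complete-join-lifting} makes the lifting entirely formal, since lifts of interior vertices may be chosen freely and independently; the remaining bookkeeping with subdivisions is routine.
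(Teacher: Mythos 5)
Your induction scheme is sound, and the link argument (showing $\pi$ restricts to a complete join $\Lk_Y(\tilde\sigma) \to \Lk_X(\sigma)$, then combining the standard fact that $\Lk_X(v)$ is wCM of dimension $n-1$ with the nested-link identity) is correct. Your sanity check that mere $(n-1)$-connectivity of $X$ is insufficient is also apt. But the general-position step, which you rightly flag as the crux, contains a gap beyond the vagueness you acknowledge: the condition you require of the filling $g\colon\Delta\to X$ --- injectivity on the vertex set of every simplex not contained in $\Sigma$ --- cannot in general be arranged, because it conflicts with the boundary constraint $g|_\Sigma=\pi\circ f$. Nothing prevents $f$ from collapsing an edge $\{a,b\}$ of $\Sigma$, and then $\pi\circ f$ collapses it too; but the link of $\{a,b\}$ in the triangulated disc $\Delta$ meets the interior, so some simplex $\rho\not\subseteq\Sigma$ contains $\{a,b\}$ and $g$ is forced to be degenerate on $\rho$. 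Subdividing $\Sigma$ only introduces more collapsing, and homotoping $\phi$ so that $f$ becomes simplexwise injective is not available either --- that is essentially a wCM-type property of $Y$, which is what you are trying to prove.

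There is a save you do not notice, which is worth recording: because $\pi$ is injective on each simplex of $Y$, the map $\pi\circ f$ is degenerate on a face of $\Sigma$ exactly when $f$ is, and with the same identification pattern; so the lift $\tilde g|_\Sigma=f$ inherits precisely the same collapses as $g|_\Sigma$, and those cause no harm to the lifting argument. What one actually needs, then, is the weaker condition that for each interior vertex $w$ of $\Delta$, $g(w)$ differs from $g(v)$ for every other vertex $v$ of every simplex of $\Delta$ containing $w$ --- equivalently, $g(w)$ is not a vertex of $g(\Lk_\Delta(w))$. But you neither formulate this weaker condition nor establish that it can be arranged; your observation that every simplex of $X$ of dimension $\leq n-1$ is a face of an $n$-simplex uses only non-emptiness of links, whereas arranging the condition requires the full connectivity hypothesis on links, fed into a careful homotoping-and-subdividing argument of the kind that occupies several preparatory lemmas in the cited Hatcher--Wahl paper. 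So the $(n-1)$-connectivity step of your proof is incomplete at exactly the point where the real content lies.
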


\begin{rem}
\label{rem-cjoin}
If $\pi \colon Y\to X$ is a complete join, then $X$ is a retract of $Y$. In fact, we may define a simplicial map $s \colon X\to Y$ such that $\pi\circ s = \id_X$ by sending a vertex $v\in X$ to any vertex in $\pi^{-1}(v)$ and then extending it to simplices. The fact that $s$ can be extended to simplices is guaranteed by the condition that $\pi$ is a complete join. In particular we may also conclude that if $Y$ is $n$-connected, so is $X$. Proposition \ref{prop-join-conn} is a kind of converse to this.
\end{rem}

\section{Abstracting Mather's infinite iteration argument}
\label{s:Mather}

The core of our argument, reducing the proof of acyclicity of a given mapping class group to proving a kind of homological auto-stability for it, is an adaptation of an infinite iteration (or ``suspension'') argument due to Mather \cite{Mather1971}. We abstract and axiomatise the version that we will need in \S\ref{subsec:Mather-axiomatic} (Proposition \ref{prop:Mather-axiomatic}) and apply it to our setting in \S\ref{subsec:Mather-application}.

\subsection{The general argument}
\label{subsec:Mather-axiomatic}

\begin{defn}
Let $T$ be a topological space and $S \subseteq T$ a closed subspace. Write $\partial_T(S)$ for the topological boundary of $S$ in $T$, that is, $\partial_T(S) = S \cap (\overline{T \smallsetminus S})$.
\end{defn}

We reserve the simpler notation $\partial$ to denote the boundary of a topological manifold. In our situation, $S$ will be a topological manifold but $\partial_T(S)$ will not necessarily coincide with $\partial S$. To illustrate the difference between topological boundary and boundaries of manifolds, we mention the following examples.

\begin{ex}
{~}

\begin{enumerate}
\item Let $T=\bR^n$ and $S \subset \bR^n$ be a compact $n$-dimensional submanifold. Then $\partial_T(S) = \partial S$.
\item Let $T = \bR^n$ and $S \subset \bR^n$ a closed submanifold of positive codimension. Then $\partial_T(S) = S$.
\item Let  $S = [0,1]\times [0,1]\subset [0,2] \times [0,1] = T$. Then $\partial_T(S) = \{1\} \times  [0,1] \subsetneq \partial S$.
\end{enumerate}
\end{ex}

\begin{notation}
For a pair of spaces $A \subseteq X$, write $\mathrm{Homeo}_{A}(X)$ for the topological group of homeomorphisms of $X$ that restrict to the identity on $A \subseteq X$, with the compact-open topology.
\end{notation}

Let $B,S \subseteq T$ be  closed subspaces with $B \cap S = \varnothing$. There is a continuous group homomorphism
\begin{equation}
\label{eq:extend-by-identity-homeo}
\mathrm{Homeo}_{\partial_T(S)}(S) \longrightarrow \mathrm{Homeo}_B(T)
\end{equation}
given by extending homeomorphisms by the identity on $T\smallsetminus S$. Precisely, given a homeomorphism $\varphi$ of $S$ that restricts to the identity on $\partial_T(S)$, let $\widehat{\varphi}$ be the homeomorphism of $T$ defined by $\varphi$ on $S$ and by the identity on $T \smallsetminus S$. The fact that $\widehat{\varphi}$ is continuous uses the assumption that $\varphi$ restricts to the identity on $\partial_T(S)$.

The first step towards our version of Mather's ``suspension'' argument is the following lemma, which constructs self-homeomorphisms of $T$ by \emph{infinite iteration}.

\begin{lem}[Infinite iteration lemma.]
\label{lem:Mather-preparation-lemma}
Suppose that we are given $\varphi \in \mathrm{Homeo}_B(T)$ such that
\begin{itemizeb}
\item $\varphi^k(S)$ is disjoint from $S$ for all $k\geq 1$ and
\item the union $\bigcup_{k\geq i} \varphi^k(S)$ is a closed subset of $T$ for each $i\geq 0$.
\end{itemizeb}
Let $h \in \mathrm{Homeo}_{\partial_T(S)}(S)$. Then the assignment
\begin{equation}
\label{eq:infinite-composition-of-conjugations}
t \longmapsto \begin{cases}
\varphi^k(h(\varphi^{-k}(t))) & t \in \varphi^k(S) \text{ for } k \geq 0 \\
t & \text{otherwise}
\end{cases}
\end{equation}
gives a well-defined element of $\mathrm{Homeo}_B(T)$.
\end{lem}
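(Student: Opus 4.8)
I will verify the three things that need checking: that the map in \eqref{eq:infinite-composition-of-conjugations} is well-defined, that it is a bijection, and that it (together with its inverse) is continuous, so that it defines an element of $\mathrm{Homeo}_B(T)$; the fact that it restricts to the identity on $B$ is then immediate since $B \cap S = \varnothing$ and hence $B \cap \varphi^k(S) = \varnothing$ for all $k \ge 0$ (as $\varphi \in \mathrm{Homeo}_B(T)$ preserves $B$, and $\varphi^k(S)$ is disjoint from $S$, which we must combine with the fact that $\varphi^{-k}$ fixes $B$ pointwise to conclude $B \cap \varphi^k(S) = \varphi^k(\varphi^{-k}B \cap S) = \varphi^k(B \cap S) = \varnothing$).

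\textbf{Well-definedness and bijectivity.}
The only ambiguity in the definition is on overlaps $\varphi^j(S) \cap \varphi^k(S)$ with $j \ne k$; but applying $\varphi^{-\min(j,k)}$ shows such an intersection is contained in $S \cap \varphi^{|j-k|}(S) = \varnothing$ by the first hypothesis, so the sets $\varphi^k(S)$, $k \ge 0$, are pairwise disjoint and the formula is unambiguous. It is also unambiguous on the overlap between a $\varphi^k(S)$ and "otherwise", by construction. Call the resulting map $H$. To see $H$ is a bijection: on the (open, by the second hypothesis with $i=0$ applied to the complement) set $T \smallsetminus \bigcup_{k\ge 0}\varphi^k(S)$ it is the identity, and it maps $\varphi^k(S)$ to itself by the homeomorphism $\varphi^k \circ h \circ \varphi^{-k}$; since $h$ restricts to the identity on $\partial_T(S)$, these pieces glue, and $H$ has a two-sided inverse given by the analogous formula with $h$ replaced by $h^{-1}$.

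\textbf{Continuity — the main obstacle.}
This is the crux. The subtlety is that although each $\varphi^k(S)$ is closed, their union over \emph{all} $k \ge 0$ need not be closed, and $H$ need not be continuous if checked naively piece-by-piece; the point of the second hypothesis is exactly to control the behaviour "at the tail". I would argue as follows. Fix $t_0 \in T$ and show $H$ is continuous at $t_0$. There are two cases. \emph{Case 1:} $t_0 \in \varphi^k(S)$ for some (unique) $k \ge 0$. The set $C_k := \bigcup_{j \ge k}\varphi^j(S)$ is closed by hypothesis, and $\varphi^{k-1}(S), \varphi^{k-2}(S), \dots, \varphi^0(S)$ are finitely many closed sets; their union together with $C_k$ is all of $\bigcup_{j\ge 0}\varphi^j(S)$, but more usefully, $U := T \smallsetminus \big(\bigcup_{0 \le j < k}\varphi^j(S) \cup C_{k+1}\big)$ is an open neighbourhood of $t_0$ on which $H$ agrees with the extension-by-identity of the homeomorphism $\varphi^k h \varphi^{-k}$ of $\varphi^k(S)$ off $\varphi^k(S)$ — i.e. with the image under \eqref{eq:extend-by-identity-homeo} of $\varphi^k h \varphi^{-k}|_{\varphi^k(S)}$, wait, more precisely $H|_U$ is the restriction to $U$ of a continuous map, namely the one built from the single closed piece $\varphi^k(S)$ (whose topological boundary in $U$ is still killed by $\varphi^k h \varphi^{-k}$), using the standard fact that a map agreeing with the identity on a neighbourhood of the topological boundary of a closed set, and with a homeomorphism on that closed set, is continuous. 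Hence $H$ is continuous at $t_0$. \emph{Case 2:} $t_0 \notin \bigcup_{k \ge 0}\varphi^k(S)$, so $H(t_0) = t_0$. Fix an open neighbourhood $V$ of $t_0$; I want an open $W \ni t_0$ with $H(W) \subseteq V$. Choose $i$ so large that $t_0 \notin C_i = \bigcup_{k \ge i}\varphi^k(S)$ — possible because... actually this requires care: $t_0$ avoids \emph{every} $\varphi^k(S)$, but we need it to have a neighbourhood avoiding $C_i$ for some $i$, which is where closedness of $C_i$ enters: since $C_i$ is closed and $t_0 \notin C_i$ for \emph{every} $i$ (indeed $t_0 \notin C_0 \supseteq C_i$), the set $T \smallsetminus C_0$ is an open neighbourhood of $t_0$ on which $H = \mathrm{id}$, so $W := V \cap (T \smallsetminus C_0)$ works. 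So Case 2 is in fact easy once we note $t_0 \notin C_0$. The genuine content is Case 1, and within it the local gluing statement for a single closed piece; I expect the write-up of that local continuity fact (that $H$ is continuous across the topological boundary of $\varphi^k(S)$, using only that $h|_{\partial_T(S)} = \mathrm{id}$ and pushing forward by the homeomorphism $\varphi^k$) to be the main thing to nail down carefully. Applying the same argument to the inverse (built from $h^{-1}$) shows $H^{-1}$ is continuous, so $H \in \mathrm{Homeo}_B(T)$, completing the proof.
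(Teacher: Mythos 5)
Your proof is correct, and it verifies the same three things (well-definedness/bijectivity, continuity, restriction to the identity on $B$) using the same essential ingredients, but it organizes the continuity argument differently from the paper. The paper covers $T$ globally by \emph{two} closed sets, namely $C_0 := \bigcup_{k\geq 0}\varphi^k(S)$ (closed by the hypothesis with $i=0$) and $T \smallsetminus \bigcup_{k\geq 0}\varphi^k(\mathrm{int}(S))$; the map is constant on the second piece, and on the first piece it appeals to the fact that each $\varphi^k(S)$ is open in the subspace topology of $C_0$ (which again uses the full strength of the second hypothesis) to reduce to continuity on each $\varphi^k(S)$ separately. You instead verify continuity pointwise: at a point $t_0 \in \varphi^k(S)$ you build a bespoke open neighbourhood $U = T \smallsetminus(\bigcup_{0\le j<k}\varphi^j(S)\cup C_{k+1})$ meeting only the $k$-th piece, and at a point $t_0 \notin C_0$ you note $T\smallsetminus C_0$ is an open neighbourhood on which $H$ is the identity. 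Both approaches invoke the second hypothesis in the form ``$C_i$ closed'' and both reduce the crux to the single-piece pasting fact that the extension-by-identity of a homeomorphism of a closed set $S\subseteq T$ fixing $\partial_T(S)$ is continuous (which the paper also treats as a black box when introducing \eqref{eq:extend-by-identity-homeo}). Your pointwise version is perhaps a touch more elementary and makes the role of $C_{k+1}$ being closed more explicit; the paper's two-set cover is more compact. Your treatment of the restriction to $B$ via the direct computation $B\cap\varphi^k(S)=\varphi^k(B\cap S)=\varnothing$ is slightly cleaner than the paper's induction on $k$, and is correct.
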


\begin{notation}
Under the conditions of the above lemma, we will denote the element \eqref{eq:infinite-composition-of-conjugations} of $\mathrm{Homeo}_B(T)$ by
\[
h^{\varphi^{\infty}} \colon T \longrightarrow T.
\]
\end{notation}

\begin{rem}
The second hypothesis on $\varphi$ in the statement of Lemma \ref{lem:Mather-preparation-lemma} is equivalent to the a priori stronger-looking statement that $\bigcup_{k \in N} \varphi^k(S)$ is a closed subset of $T$ for any subset $N \subseteq \{0,1,2,\ldots\}$. Intuitively, this may be thought of as saying that the sequence of closed subsets $\varphi^k(S)$ ``converges to an end'' of the space $T$.
\end{rem}

\begin{proof}[Proof of Lemma \ref{lem:Mather-preparation-lemma}]
The first hypothesis on $\varphi$ ensures that the subsets $\varphi^k(S) \subseteq T$ for $k\geq 0$ are pairwise disjoint. Thus \eqref{eq:infinite-composition-of-conjugations} gives a well-defined bijection $T \to T$. We next check that it is continuous. To do this, it will suffice to check that it is continuous when restricted to each piece of a finite covering of $T$ by closed subsets. We will use the covering of $T$ by the two subsets
\[
\textstyle
\bigcup_{k\geq 0} \varphi^k(S) \qquad\text{and}\qquad T \smallsetminus \bigl( \bigcup_{k\geq 0} \varphi^k(\mathrm{int}(S)) \bigr) .
\]
These clearly cover $T$, the first one is closed by the second hypothesis on $\varphi$ (with $i=0$) and the second is the complement of a union of open subsets, hence closed. The map \eqref{eq:infinite-composition-of-conjugations} is \emph{constant} when restricted to the second closed subset (this uses the fact that $h$ restricts to the identity on $\partial_T(S)$); in particular, its restriction to the second closed subset is continuous. It thus suffices to show that \eqref{eq:infinite-composition-of-conjugations} is continuous when restricted to $\bigcup_{k\geq 0} \varphi^k(S)$. The second hypothesis on $\varphi$ (for all $i\geq 0$) implies that each $\varphi^k(S)$ is an open subset of the union $\bigcup_{k\geq 0} \varphi^k(S)$ in the subspace topology induced from $T$. Thus, to show that \eqref{eq:infinite-composition-of-conjugations} is continuous when restricted to $\bigcup_{k\geq 0} \varphi^k(S)$, it will suffice to show that it is continuous when restricted to $\varphi^k(S)$ for each fixed $k\geq 0$. But this is true by construction. Thus we have shown that \eqref{eq:infinite-composition-of-conjugations} is a continuous bijection $T \to T$.

To see that the inverse of \eqref{eq:infinite-composition-of-conjugations} is continuous, we simply replace $h$ with $h^{-1}$ in the formula \eqref{eq:infinite-composition-of-conjugations}: this clearly gives the inverse of \eqref{eq:infinite-composition-of-conjugations}, and it is continuous by the above reasoning with $h^{-1}$ in place of $h$. Thus we have shown that $\eqref{eq:infinite-composition-of-conjugations} \in \mathrm{Homeo}(T)$.

Finally, we must show that \eqref{eq:infinite-composition-of-conjugations} restricts to the identity on $B$. For this, it will suffice to show that $B$ is disjoint from $\varphi^k(S)$ for all $k\geq 0$. This follows by induction on $k$: the base case that $B \cap S = \varnothing$ is true by assumption, and the inductive step follows from the fact that the self-homeomorphism $\varphi \colon T \to T$ fixes $B$.
\end{proof}

The continuous group homomorphism \eqref{eq:extend-by-identity-homeo} induces a group homomorphism of the corresponding mapping class groups:
\begin{equation*}
\iota \colon \pi_0 \bigl( \mathrm{Homeo}_{\partial_T(S)}(S) \bigr) \longrightarrow \pi_0 \bigl( \mathrm{Homeo}_B(T) \bigr)
\end{equation*}
Let us choose subgroups
\[
H \subseteq \pi_0 \bigl( \mathrm{Homeo}_{\partial_T(S)}(S) \bigr) \qquad\text{and}\qquad G \subseteq \pi_0 \bigl( \mathrm{Homeo}_B(T) \bigr)
\]
such that $\iota(H) \subseteq G$, so that we have a restricted homomorphism
\begin{equation*}
\iota|_H \colon H \longrightarrow G.
\end{equation*}

\begin{prop}[The infinite iteration argument for mapping class groups.]
\label{prop:Mather-axiomatic}
In the above setup, suppose that
\begin{itemize}
\item the homomorphism $\iota|_H$ induces isomorphisms on homology in all degrees;
\item there is an element $\varphi \in \mathrm{Homeo}_B(T)$ satisfying the two hypotheses of Lemma \ref{lem:Mather-preparation-lemma}, such that $[\varphi] \in G$ and, for each $h \in \mathrm{Homeo}_{\partial_T(S)}(S)$ with $[h] \in H$, we have $[h^{\varphi^{\infty}}] \in G$.
\end{itemize}
Then $G$ is acyclic.
\end{prop}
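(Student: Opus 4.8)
The plan is to run Mather's classical infinite-iteration (Eilenberg-swindle-style) argument at the level of group homology. First I would set up the ``multiplication by infinitely many copies'' map. Using the hypotheses on $\varphi$, Lemma~\ref{lem:Mather-preparation-lemma} gives, for every $h \in \mathrm{Homeo}_{\partial_T(S)}(S)$ with $[h] \in H$, a homeomorphism $h^{\varphi^\infty} \in \mathrm{Homeo}_B(T)$ whose class lies in $G$. The key algebraic observation is that $h \mapsto h^{\varphi^\infty}$ is compatible with composition: since the pieces $\varphi^k(S)$ are pairwise disjoint and each $h^{\varphi^\infty}$ acts on $\varphi^k(S)$ by the conjugate $\varphi^k h \varphi^{-k}$, we get $(h_1 h_2)^{\varphi^\infty} = h_1^{\varphi^\infty} h_2^{\varphi^\infty}$, so passing to $\pi_0$ we obtain a group homomorphism $\mu \colon H \to G$, $[h] \mapsto [h^{\varphi^\infty}]$.

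Next I would identify how $\mu$ relates to the stabilisation map $\iota|_H \colon H \to G$. By construction, $h^{\varphi^\infty}$ acts as $h$ on the $k=0$ piece $S$ and as the conjugate $\varphi^k h \varphi^{-k}$ on $\varphi^k(S)$ for $k \geq 1$. Conjugation by $[\varphi] \in G$ is an inner automorphism of $G$, hence acts as the identity on $H_*(G)$. Therefore, after ``discarding'' the $k=0$ term, the remaining infinite tail $h \mapsto (h^{\varphi^\infty})$-restricted-to-$\bigcup_{k\geq 1}\varphi^k(S)$ is itself of the form $\mu$ again, but precomposed with the identity on homology. Making this precise: let $\mu' \colon H \to G$ send $[h]$ to the class of the homeomorphism supported on $\bigcup_{k\geq 1}\varphi^k(S)$ acting by $\varphi^k h \varphi^{-k}$ there. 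Then on the one hand $\mu_* = (\iota|_H)_* \cdot \mu'_*$ in the sense that $h^{\varphi^\infty}$ is the product (of commuting homeomorphisms with disjoint supports) of $\iota(h)$ and the tail, giving $\mu_* = \iota_* \cdot \mu'_*$ on $H_*(-)$ by the disjoint-support/commuting-product principle for homology (the two commuting maps induce $\mu_* = $ product of the two maps on homology). On the other hand, conjugating the whole configuration by $\varphi$ identifies $\mu'$ with $\mu$ up to the inner automorphism $c_{[\varphi]}$ of $G$, so $\mu'_* = (c_{[\varphi]})_* \circ \mu_* = \mu_*$ on homology. Combining, $\mu_* = \iota_* \cdot \mu_*$ as self-maps of the appropriate homology, i.e. $\iota_* = \mathrm{id}$ after cancelling $\mu_*$... more carefully, $\mu_* = \iota_* \mu_*$ forces $\iota_* $ to be ``idempotent-like''; together with the hypothesis that $\iota_*$ is an \emph{isomorphism} this is what we exploit.

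Then I would extract acyclicity. The hypothesis that $\iota|_H$ induces isomorphisms $H_*(H) \xrightarrow{\cong} H_*(G)$ in all degrees means in particular that $H_*(G)$ is a ``module-like'' target where $\iota_*$ is invertible. From $\mu_* = \iota_* \cdot \mu_*$ (product in the Pontryagin-type sense coming from commuting disjoint-support homeomorphisms, i.e. $H_*(G)$ receives a product structure via the two inclusions), one deduces that $\iota_*$ acts as the identity for this product, whence by the standard swindle the image of $\iota_*$ in positive degrees is killed: concretely, for $x \in \widetilde H_*(H)$, $\iota_*(x) = \iota_*(x)\cdot 1 = \mu_*(\text{something})$ and iterating the swindle $\iota_*(x) = \iota_*(x) + \iota_*(x) + \cdots$ (the infinite-iteration identity) forces $\iota_*(x) = 0$; since $\iota_*$ is injective (being an isomorphism), $x = 0$. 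Hence $\widetilde H_*(H) = 0$, and as $\iota_*$ is an isomorphism, $\widetilde H_*(G) = 0$, i.e. $G$ is acyclic.

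The main obstacle will be making the ``infinite iteration identity'' $\mu_* = \iota_* + \mu_*$ (additively, on reduced homology, in each degree) genuinely rigorous: one must show that the homeomorphism $h^{\varphi^\infty}$ realises, at the level of homology of classifying spaces, the statement that ``one copy plus infinitely many copies equals infinitely many copies.'' The clean way is to introduce the homomorphism $H \times H \to G$, $([h_1],[h_2]) \mapsto [h_1 \text{ on } S] \cdot [h_2^{\varphi^\infty} \text{ on } \bigcup_{k\geq1}\varphi^k(S)]$, note its two restrictions are $\iota|_H$ and $\mu$ (the latter via the inner-automorphism identification), and that its restriction to the diagonal is $\mu$ again; then applying $H_*(-)$ and the Künneth/Eilenberg–Zilber formula gives the additivity relation $\mu_* = \iota_* + \mu_* $ on $\widetilde H_*$, from which acyclicity follows as above. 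I would spell out this diagonal argument carefully, as it is the crux.
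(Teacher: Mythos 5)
Your plan is essentially the paper's proof (which is Mather's argument): set up $j \colon H \to G$, $[h] \mapsto [h^{\varphi^\infty}]$; set up the ``tail'' map $l \colon H \to G$ supported on $\bigcup_{k\geq 1}\varphi^k(S)$ (your $\mu'$); note that $i = \iota|_H$ and $l$ have representatives with disjoint supports, hence commute, giving a homomorphism $\eta \colon H \times H \to G$ with $\eta \circ \Delta = j$; observe $l = c_{[\varphi]} \circ j$ so that $l_* = j_*$ on homology; and then extract the cancellation $i_*(\alpha) = 0$. You correctly identify the homomorphism $H \times H \to G$ and the diagonal as the crux. Two points, one cosmetic and one substantive.

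Cosmetic: the phrase ``iterating the swindle $\iota_*(x) = \iota_*(x) + \iota_*(x) + \cdots$'' misdescribes the mechanism. The actual step is a \emph{single} cancellation: from $j_*(\alpha) = i_*(\alpha) + l_*(\alpha)$ and $j_*(\alpha) = l_*(\alpha)$ one gets $i_*(\alpha) = 0$. There is no infinite sum in homology; the ``$\infty + 1 = \infty$'' phenomenon is already packaged into the homeomorphism $h^{\varphi^\infty}$ before passing to $\pi_0$. Relatedly, the restriction of $\eta$ to the second factor is $l$, not $j$; they only agree after applying $H_*$, and you should keep them notationally distinct until then.

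Substantive gap: the additivity relation $j_* = i_* + l_*$ on $\widetilde H_r$ is \emph{not} a direct consequence of K{\"u}nneth. In general $\Delta_* \colon H_r(H) \to H_r(H \times H)$ has cross-terms landing in $H_p(H) \otimes H_q(H)$ with $0 < p,q < r$, and these contribute to $\eta_* \Delta_*(\alpha)$. One only gets the clean formula $\Delta_*(\alpha) = \alpha \otimes 1 + 1 \otimes \alpha$ in degree $r$ after knowing $\widetilde H_s(H) = 0$ for all $s < r$. This forces an induction on $r$: assume $\widetilde H_s(G) = 0$ for $s \leq r-1$; use the hypothesis that $\iota|_H$ is a homology isomorphism to transfer this to $\widetilde H_s(H) = 0$ for $s \leq r-1$; only then does K{\"u}nneth give $j_*(\alpha) = i_*(\alpha) + l_*(\alpha)$ in degree $r$, whence $i_*(\alpha) = 0$ and, by injectivity of $i_*$, $H_r(H) = 0$ and hence $H_r(G) = 0$. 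Your write-up omits this induction entirely, and without it the additivity identity you rely on is simply false in general. Build the induction in and the proof is complete.
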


\begin{rem}
The first condition in the hypotheses of Proposition \ref{prop:Mather-axiomatic} is to be thought of as a \emph{homological auto-stability} property of the group $G$.
\end{rem}

\begin{rem}
\label{rmk:special-case-of-Mather}
In practice, we will always apply this proposition in the special case where we have $H = \pi_0(\Homeo_{\partial_T(S)}(S))$ and $G = \pi_0(\Homeo_B(T))$, so the second hypothesis simplifies to the statement that there exists an element $\varphi \in \mathrm{Homeo}_B(T)$ satisfying the two hypotheses of Lemma \ref{lem:Mather-preparation-lemma}.
\end{rem}

\begin{defn}
If a group $G$ admits the structure described above, then we say that it is \emph{homologically dissipated} and that $[\varphi] \in G$ is a \emph{homological dissipator} for $G$. In this terminology, Proposition \ref{prop:Mather-axiomatic} is the statement that \emph{homologically dissipated groups are acyclic}. This should be compared to \emph{binate groups} \cite{Berrick1989} and \emph{dissipated groups} \cite{Berrick2002}; see Remark \ref{rem:difference-in-hypotheses} below.
\end{defn}

\begin{proof}[Proof of Proposition \ref{prop:Mather-axiomatic}]
This is an almost verbatim adaptation of \cite{Mather1971}. However, we give a complete proof, since this technical result is central to our arguments in this paper.

We first note that the construction of Lemma \ref{lem:Mather-preparation-lemma} gives a well-defined function
\[
h \mapsto h^{\varphi^{\infty}} \colon \mathrm{Homeo}_{\partial_T(S)}(S) \longrightarrow \mathrm{Homeo}_B(T).
\]
It is not hard to check that this is continuous in the compact-open topology, and it is clearly a homomorphism. It thus induces a homomorphism
\[
\pi_0 \bigl( \mathrm{Homeo}_{\partial_T(S)}(S) \bigr) \longrightarrow \pi_0 \bigl( \mathrm{Homeo}_B(T) \bigr) ,
\]
which restricts, by our hypotheses, to a homomorphism
\[
j \colon H \longrightarrow G.
\]
We have also assumed that $[\varphi] \in G$, so we have an inner automorphism $[\varphi] \cdot - \cdot [\varphi]^{-1} \colon G \to G$. Define
\[
l \colon H \longrightarrow G
\]
to be the composition $([\varphi] \cdot - \cdot [\varphi]^{-1}) \circ j$. Notice that $l([h])$ is represented by the homeomorphism of $T$ given by the formula
\begin{equation*}
t \longmapsto \begin{cases}
\varphi^k(h(\varphi^{-k}(t))) & t \in \varphi^k(S) \text{ for } k \geq 1 \\
t & \text{otherwise}
\end{cases}
\end{equation*}
(notice that this differs from $\iota|_H$ only on $S \subseteq T$). Finally, we also consider the homomorphism
\[
i = \iota|_H \colon H \longrightarrow G;
\]
recall that this extends a homeomorphism of $S$ by the identity on $T \smallsetminus S$. Since $i([h])$ and $l([h])$ admit representatives with disjoint support (contained in $S$ and in $\bigcup_{k\geq 1} \varphi^k(S)$ respectively), we see that any element of $i(H)$ commutes with any element of $l(H)$. This means that the function
\[
\eta \colon H \times H \longrightarrow G
\]
defined by $\eta(h_0 , h_1) = i(h_0) l(h_1)$ is a homomorphism. If we denote by $\Delta \colon H \to H \times H$ the diagonal homomorphism, then by comparing the formulas for $j$ and for $l$ we see that $\eta \circ \Delta = j$.

We now prove by induction on $r \geq 0$ that $\widetilde{H}_s(G) = 0$ for all $s\leq r$. This is trivial for $r=0$. For the inductive step, we assume that $\widetilde{H}_s(G) = 0$ for all $s\leq r-1$. By the first hypothesis, this implies that $\widetilde{H}_s(H) = 0$ for all $s\leq r-1$. By the K{\"u}nneth formula, this implies that we have $H_r(H \times H) \cong H_r(H) \oplus H_r(H)$ and that for any $\alpha \in H_r(H)$, the class $\Delta_*(\alpha)$ corresponds to $\alpha \oplus \alpha$ under this isomorphism. We thus have
\[
j_*(\alpha) = \eta_*(\Delta_*(\alpha)) = \eta_*(\alpha \oplus \alpha) = i_*(\alpha) + l_*(\alpha).
\]
But $j$ and $l$ differ by an inner automorphism of $G$, which acts trivially on homology, so we also have $j_*(\alpha) = l_*(\alpha)$. Putting this together with the above formula, we deduce that $i_*(\alpha) = 0$. By the first hypothesis, $i_*$ is injective, so $\alpha = 0$. Thus we have shown that $H_r(H) = 0$. By the first hypothesis again, this means that also $H_r(G) = 0$, which finishes the inductive step.
\end{proof}

\begin{rem}
\label{rem:difference-in-hypotheses}
Our notion of \emph{homologically dissipated groups} is inspired by the classical notion of \emph{dissipated groups}~\cite{Berrick2002}. Roughly, a group $G$ acting on a set $X$ is \emph{dissipated} if it admits an exhaustion by subgroups $G_i$ defined by restricting the support of the action, and each $G_i$ admits a ``dissipator'' $\rho_i \in G$ (an element satisfying similar properties to the element \eqref{eq:infinite-composition-of-conjugations} provided by Lemma \ref{lem:Mather-preparation-lemma}). Dissipated groups are \emph{binate} ($\equiv$ \emph{pseudo-mitotic}) by \cite{Berrick2002}, which in turn are acyclic by \cite{Berrick1989} or \cite{Varadarajan1985}.

This notion cannot directly be applied to mapping class groups, since they do not naturally act on any set; instead, they act \emph{up to homotopy} on the underlying surface. However, one may adapt the notion of dissipated groups to an analogous notion of ``homotopy-dissipated groups'' that applies to mapping class groups and still implies acyclicity. One could then try to prove Theorem \ref{thm:disc-minus-Cantor} using the notion of homotopy-dissipated groups together with a ``fragmentation'' argument introduced by Segal \cite{Segal1978} and used in \cite{dlHM83, SergiescuTsuboi1994, SergiescuTsuboi1996, CC21}. However, such a strategy cannot work: the first step of this fragmentation argument in the case of $S = \bD^2 \smallsetminus \cC$ would be to prove that the subgroup $\Map(\bD^2 \smallsetminus \cC)_{\{0\}}$ of mapping classes that are the identity in a neighbourhood of $0 \in \cC$ is homotopy-dissipated. But this group surjects onto the mapping class group of the cylinder, which is $\bZ$ (this surjection is similar to the one described in \cite[paragraph before Lemma A.6]{CC21}), so it is not acyclic and therefore cannot be homotopy-dissipated.

Thus we instead work with the notion of \emph{homologically dissipated groups} introduced above -- which in turn leads to the use of homological stability arguments. The key difference between the notions of \emph{(homotopy-)dissipated groups} and \emph{homologically dissipated groups} is the following. In the former, $G$ admits an exhaustion by subgroups $G_i$ defined by restricting the support of homeomorphisms, and \emph{each} $G_i$ admits a dissipator $[\rho_i] \in G$. In the latter, $G$ has just a single subgroup $H$ defined by restricting the support of homeomorphisms, which admits a dissipator $[\varphi] \in G$ and whose inclusion into $G$ induces isomorphisms on homology.\footnote{In fact $H$ does not have to be a subgroup; we just require a homomorphism $H \to G$.} In other words, the condition that $H \subset G$ is a homology isomorphism replaces the requirement that we have a whole family of subgroups (equipped with dissipators) limiting to $G$.
\end{rem}

\subsection{Application to binary tree surfaces}
\label{subsec:Mather-application}

We now apply Proposition \ref{prop:Mather-axiomatic} to reduce the proof of acyclicity of $\Map(\mathfrak{B}(\Sigma))$ to a homological auto-stability result, which will be part of the results of the next section. Consider the boundary connected sum $\mathfrak{B}(\Sigma) \natural \mathfrak{B}(\Sigma)$ of two copies of $\mathfrak{B}(\Sigma)$. Extending homeomorphisms of one copy of $\mathfrak{B}(\Sigma)$ by the identity on the other copy, we have a homomorphism
\begin{equation}
\label{eq:stabilisation-e}
e \colon \Map(\mathfrak{B}(\Sigma)) \longrightarrow \Map(\mathfrak{B}(\Sigma) \natural \mathfrak{B}(\Sigma)).
\end{equation}

\begin{prop}
\label{prop:acyclicity-assuming-stability}
Suppose that $e$ is a homology isomorphism. Then the group $\Map(\mathfrak{B}(\Sigma))$ is acyclic.
\end{prop}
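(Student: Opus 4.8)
The plan is to apply Proposition~\ref{prop:Mather-axiomatic} in the special case described in Remark~\ref{rmk:special-case-of-Mather}, where $H = \pi_0(\Homeo_{\partial_T(S)}(S))$ and $G = \pi_0(\Homeo_B(T))$. For this I must make the right choices of $T$, $S$, and $B$, and then supply the two ingredients demanded by the proposition: (i) that the inclusion-induced map $\iota|_H \colon H \to G$ is a homology isomorphism in all degrees, and (ii) that there is an element $\varphi \in \Homeo_B(T)$ satisfying the two hypotheses of Lemma~\ref{lem:Mather-preparation-lemma}.

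First I would set up the geometry. Since $\mathfrak{B}(\Sigma)$ is homeomorphic to $\mathfrak{B}(\Sigma) \natural \mathfrak{B}(\Sigma)$ (as noted in the proof of Theorem~\ref{thm:hom-stab}), I want to realise this self-similarity concretely inside a single surface. Recalling that $\mathfrak{B}(\Sigma) = \mathrm{Gr}_\mathfrak{B}(\Sigma)_\circ$ is built from the rooted binary tree, I would take $T = \mathfrak{B}(\Sigma)$ with boundary circle $\partial T$, let $B = \partial T$, and let $S \subseteq T$ be the closed sub-surface corresponding to one of the two children subtrees of the root, so that $S$ is itself a copy of $\mathfrak{B}(\Sigma)$ with $\partial_T(S)$ equal to its single boundary circle (the circle along which it is glued into $T$); thus $\Homeo_{\partial_T(S)}(S) = \Homeo_\partial(S)$ and $H = \Map(\mathfrak{B}(\Sigma))$, while $G = \Map(\mathfrak{B}(\Sigma))$ as well. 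Under these identifications the map $\iota|_H$ is exactly the stabilisation map $e$ of \eqref{eq:stabilisation-e} (extending a homeomorphism of one of the two glued copies by the identity on the rest), so hypothesis (i) of Proposition~\ref{prop:Mather-axiomatic} is precisely the assumption that $e$ is a homology isomorphism.

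Next I would construct the homeomorphism $\varphi$. The infinite binary tree admits a self-embedding sending the whole tree onto the left-child subtree of the root; pushing this construction down one level repeatedly gives a homeomorphism $\varphi$ of $T$ that fixes $\partial T = B$ pointwise, maps $S$ homeomorphically onto a strictly smaller sub-surface $\varphi(S) \subsetneq S$ corresponding to a grandchild subtree lying in the \emph{other} branch relative to $S$, and whose iterates $\varphi^k(S)$ form a nested sequence of sub-surfaces converging to a single end of $T$. I would check the two hypotheses of Lemma~\ref{lem:Mather-preparation-lemma}: that $\varphi^k(S) \cap S = \varnothing$ for all $k \geq 1$ (ensured by choosing the branches so that the image of $S$ under $\varphi$ lands off $S$; this is where the freedom coming from $\mathfrak{B}(\Sigma) \cong \mathfrak{B}(\Sigma)\natural\mathfrak{B}(\Sigma)$ is used), and that each union $\bigcup_{k \geq i} \varphi^k(S)$ is closed in $T$ (true because the $\varphi^k(S)$ shrink toward an end, so any point not among them has a neighbourhood meeting only finitely many of them). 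With $\varphi$ in hand, $[\varphi] \in G$ automatically, and in this special case Remark~\ref{rmk:special-case-of-Mather} tells us the second hypothesis of Proposition~\ref{prop:Mather-axiomatic} is fully satisfied. Proposition~\ref{prop:Mather-axiomatic} then yields that $G = \Map(\mathfrak{B}(\Sigma))$ is acyclic.

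The main obstacle is the careful bookkeeping needed to make $\varphi$ simultaneously fix $\partial T$, send $S$ off itself, and have iterates accumulating at a single end --- essentially one must exhibit a concrete combinatorial model of $\mathfrak{B}(\Sigma)$ in which a "shift toward one end" homeomorphism is visibly available and the disjointness $\varphi^k(S) \cap S = \varnothing$ is transparent. Everything else (identifying $\iota|_H$ with $e$, verifying the closedness hypothesis, invoking Proposition~\ref{prop:Mather-axiomatic}) is formal once the geometric picture is pinned down.
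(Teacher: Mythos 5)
Your overall strategy is the right one, and it matches the paper's: apply Proposition~\ref{prop:Mather-axiomatic} in the special case of Remark~\ref{rmk:special-case-of-Mather} with $T = \mathfrak{B}(\Sigma)$, $B = \partial T$, and $S$ an interior subsurface homeomorphic to $\mathfrak{B}(\Sigma)$, and identify $\iota|_H$ with the stabilisation map $e$. That part is fine. The problem is your construction of $\varphi$, which as written is not a homeomorphism and is also internally inconsistent.

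You describe $\varphi$ as arising from ``a self-embedding sending the whole tree onto the left-child subtree of the root'' and say it ``maps $S$ homeomorphically onto a strictly smaller sub-surface $\varphi(S) \subsetneq S$'' while simultaneously saying $\varphi(S)$ lies ``in the \emph{other} branch relative to $S$.'' These two claims are incompatible: if $\varphi(S) \subsetneq S$ then $\varphi(S) \cap S \neq \varnothing$ and the first hypothesis of Lemma~\ref{lem:Mather-preparation-lemma} (that $\varphi^k(S) \cap S = \varnothing$ for all $k \geq 1$) fails outright; if $\varphi(S)$ is in the other branch then $\varphi(S) \not\subseteq S$ and the ``nested sequence of sub-surfaces'' picture is wrong --- the $\varphi^k(S)$ must be \emph{pairwise disjoint}, not nested. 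More seriously, a map of the binary tree onto a proper subtree is an \emph{embedding}, not a self-homeomorphism of $T$: it is not surjective, and no amount of ``pushing down one level repeatedly'' turns a one-sided shift on the subtrees $\{\text{subtree at }R^k\}_{k \geq 0}$ into a bijection, because nothing is available to map onto the root piece or onto $S = S_0$. This is exactly the obstruction to a naive semi-infinite shift being a homeomorphism.

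The missing idea, which is the crux of the paper's proof, is a clever re-decomposition of the surface (equivalently, of its end space) into a \emph{bi-infinite} ($\bZ$-indexed) pattern. The paper uses the homeomorphism $\cC \cong (\cC\omega)^+$ (so also $\cC \cong (\cC\bZ)^+$) to draw $D_\cC$ as the plane minus a small disc and minus a bi-infinite horizontal sequence of copies of the Cantor set (Figure~\ref{fig:Disc-minus-Cantor-2}). With this picture, $\varphi$ can be taken to be an honest translation by one unit inside a horizontal strip, extended by the identity elsewhere; this \emph{is} a self-homeomorphism of $D_\cC$ fixing $\partial D_\cC$, the images $\varphi^k(S)$ for $k \geq 0$ are pairwise disjoint translates marching toward the single end at infinity of the plane, and the union $\bigcup_{k\geq i}\varphi^k(S)$ is closed precisely because that accumulation end is not a point of the surface. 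Your proposal would need to replace the nested-shift heuristic with this bi-infinite relabelling (or some equivalent device) to produce a well-defined homeomorphism satisfying both hypotheses of Lemma~\ref{lem:Mather-preparation-lemma}.
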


This reduces the proof of Theorem \ref{thm:disc-minus-Cantor} to proving that $e$ is a homology isomorphism, which we will do in the following two sections using the tools of homological stability.

\begin{figure}[tb]
    \centering
    \includegraphics{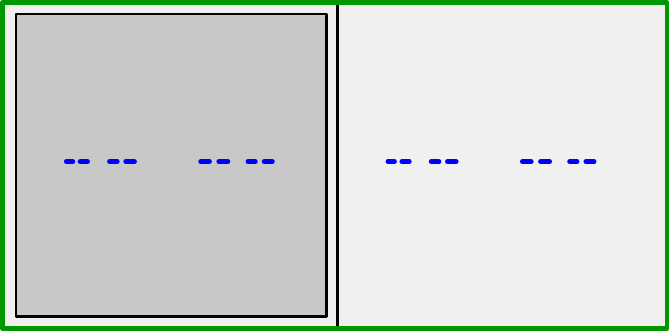}
    \caption{The surface $D_\cC$, which is the disc minus the Cantor set (drawn in blue here). The subsurface $S$ is drawn in darker grey and the boundary is green.}
    \label{fig:Disc-minus-Cantor-1}
    \vspace{2ex}
    \includegraphics[scale=0.37]{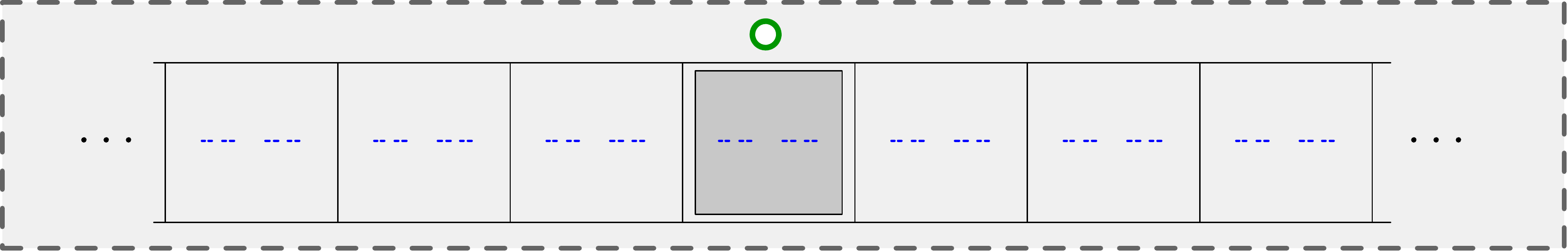}
    \caption{Another viewpoint of $D_\cC$ and the subsurface $S \subseteq D_\cC$. The outer dashed rectangle denotes the plane and the small green circle is $\partial D_\cC$; the inside of this circle is the \emph{outside} of $D_\cC$; we have turned the surface ``inside out'' compared to Figure \ref{fig:Disc-minus-Cantor-1} above and have also decomposed the Cantor set as the $1$-point compactification of a disjoint union of countably infinitely many copies of the Cantor set. Although the bi-infinite sequence of copies of $\cC$ appears to converge to two points at infinity in the picture, we emphasise that there is just a single end at infinity, corresponding to the unique end of the plane.}
    \label{fig:Disc-minus-Cantor-2}
\end{figure}

\begin{proof}
We give full details for the surface $D_\cC = \mathfrak{B}(\bS^2)$ and then indicate how to modify the argument for $\mathfrak{B}(\Sigma)$ for an arbitrary $\Sigma$.

Figures \ref{fig:Disc-minus-Cantor-1} and \ref{fig:Disc-minus-Cantor-2} show two depictions of the surface $D_\cC$, where the removed Cantor set is drawn in blue. Figure \ref{fig:Disc-minus-Cantor-2} should be read as follows: the outer dashed rectangle represents the plane and we remove the interior of a small disc (in the top middle of the figure) to obtain a closed $2$-disc; we then remove the bi-infinite sequence of copies of the Cantor set $\cC$ depicted in the figure in blue. The space of ends is thus the $1$-point compactification $(\cC\omega)^+$ of a countably infinite disjoint union $\cC\omega = \bigsqcup_\omega \cC$ of copies of $\cC$, which is again homeomorphic to the Cantor set. In each of the two figures we have also drawn a subsurface $S$, contained in the interior of $D_\cC$, which is again homeomorphic to $D_\cC$. We emphasise that Figures \ref{fig:Disc-minus-Cantor-1} and \ref{fig:Disc-minus-Cantor-2} are two illustrations of the \emph{same} pair of surfaces $(D_\cC,S)$.

The inclusion $D_\cC \hookrightarrow D_\cC \natural D_\cC$ is isotopy equivalent to the inclusion $S \hookrightarrow D_\cC$ depicted in Figure \ref{fig:Disc-minus-Cantor-1}, so the assumption of the proposition says that this inclusion is a homology isomorphism. Setting $T = D_\cC$, $B = \partial D_\cC$ and $S$ to be the subsurface depicted in grey, Proposition \ref{prop:Mather-axiomatic} implies that it suffices to find a homeomorphism $\varphi \in \Homeo_\partial(D_\cC)$ satisfying the two conditions of that proposition.

To construct $\varphi$ we consider Figure \ref{fig:Disc-minus-Cantor-2}. Define $\varphi$ to be the identity outside of the horizontal strip. On a slightly smaller horizontal strip that contains $S$, define $\varphi$ by translation one unit to the right, and extend this continuously in the remaining small neighbourhood of the top and bottom boundaries of the strip. The iterated images of $S$ under $\varphi$ are clearly disjoint, so the first condition is satisfied. Moreover, the union of any collection of the iterated images of $S$ under $\varphi$ is a closed subset of $D_\cC$ (an essential point here is that the point at infinity is not part of the surface), so the second condition is satisfied. Thus Proposition \ref{prop:Mather-axiomatic} implies that $\Map(D_\cC)$ is acyclic.

Finally, to generalise the argument to $\mathfrak{B}(\Sigma)$ for any surface $\Sigma$ in place of $\Sigma = \bS^2$, we modify Figures \ref{fig:Disc-minus-Cantor-1} and \ref{fig:Disc-minus-Cantor-2} appropriately, by taking infinitely many connected sums with copies of $\Sigma$ along a collection of pairwise disjoint discs converging to all ends of the surface; all of the arguments above go through identically with this modification.
\end{proof}

\section{Homological stability}\label{s:hs}

In this section, we reduce the proof of Theorem~\ref{thm:disc-minus-Cantor} to Theorem~\ref{thm:hom-stab} (using the results of \S\ref{s:Mather}) and then reduce the proof of Theorem~\ref{thm:hom-stab} to the high-connectivity of certain simplicial complexes, which will be proven in \S\ref{s:high-connectivity}.

First, in \S\ref{s:homogeneous-for-inf-surfaces}, we construct a homogeneous category for big mapping class groups; precisely, the goal of this subsection is to construct a pre-braided homogeneous category whose objects are (words on the alphabet of) decorated surfaces of finite or infinite type: see Proposition \ref{prop:pre-braided-homogeneous}. Using this setup and \cite[Thm.~A]{RWW17}, together with Proposition \ref{prop:acyclicity-assuming-stability}, we next show in \S\ref{s:reduction} that both Theorem~\ref{thm:disc-minus-Cantor} (acyclicity) and Theorem~\ref{thm:hom-stab} (homological stability) reduce to proving that a certain family of semi-simplicial sets $W_n(A,X)_\bullet$ is highly-connected. In \S\ref{s:simplicial-sets-complexes} we show that one may pass to the associated simplicial complexes $S_n(A,X)$ and in \S\ref{s:simplifying} we reduce this further to the statement that a certain (simpler) family of simplicial complexes $TC_n(A,X)$ is highly-connected, which will be proven in \S\ref{s:high-connectivity}.

\subsection{The homogeneous category for infinite-type surfaces}\label{s:homogeneous-for-inf-surfaces}

In this subsection, we build the homogeneous categories needed for the proof of homological stability of infinite-type surfaces. We work essentially with the construction of \cite[\S 5.6]{RWW17}, but allow also non-compact surfaces. In the infinite-type setting we face new technical difficulties: for example, the boundary sum operation no longer satisfies the cancellation property; this will be resolved following \cite[Remark 1.11]{RWW17}.

Recall that, in this paper, surfaces are assumed to be second countable, connected and orientable, and their boundaries are assumed to be compact (thus a disjoint union of finitely many circles). Let $\cM$ be the groupoid whose objects are \emph{decorated surfaces} $(S,I)$, where $S$ is a surface with at least one boundary component and $I \colon [-1,1] \hookrightarrow \partial S$ is a parametrised interval in its boundary. The boundary component of $S$ containing $I$ is called the \emph{based boundary} of $S$ and denoted by $\partial_0 S$. The morphisms of $\cM$ are isotopy classes of homeomorphisms restricting to the identity on $I$. (Note that, by definition, the mapping class group of a surface is the group of isotopy classes of homeomorphisms of $S$ that restrict to the identity on $\partial S$.) The following well-known fact allows us to switch freely between the smooth and topological categories. We include a sketch of a proof of this fact in Appendix \ref{appendix-diff-homeo} to emphasise that it does not depend on assuming that surfaces are of finite type.

\begin{lem}
\label{diff-to-homeo}
For any smooth surface $S$, the forgetful map $\Diff(S,\partial S) \to \Homeo(S,\partial S)$ is a weak homotopy equivalence of topological groups.
\end{lem}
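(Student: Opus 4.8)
The plan is to realise the forgetful map, up to weak homotopy equivalence, as the inclusion of a fibre into a fibration whose base is a space of smooth structures on $S$, and then to show that this base is weakly contractible using two-dimensional smoothing theory. The reason to argue this way — rather than via the classical computations of the homotopy types of $\Diff$ and $\Homeo$ of \emph{compact} surfaces — is that every step below is local over $S$, so compactness of $S$ never intervenes; this is precisely the point that needs to be checked for the infinite-type case.

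Fix the given smooth structure $\sigma_0$ on $S$ together with a collar of $\partial S$, and let $\mathrm{Sm}(S)$ denote the space of smooth structures on the underlying topological surface that agree with $\sigma_0$ on that collar, topologised in the usual way (for instance, as a space of $O(2)$-reductions of the tangent microbundle, or of smooth atlases). The group $\Homeo(S,\partial S)$, with the compact-open topology, acts on $\mathrm{Sm}(S)$ by push-forward, $f\cdot\sigma=f_*\sigma$, and the stabiliser of $\sigma_0$ is exactly the subgroup $\Diff(S,\partial S)$, since $f_*\sigma_0=\sigma_0$ holds precisely when $f$ is a $\sigma_0$-diffeomorphism. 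Using the local contractibility of homeomorphism groups of manifolds and the accompanying parametrised isotopy extension theorem (Edwards--Kirby) — statements that hold for arbitrary, not necessarily compact, manifolds — one concludes by a standard argument that the orbit map
\[
\Homeo(S,\partial S)\longrightarrow\mathrm{Sm}(S),\qquad f\longmapsto f_*\sigma_0,
\]
is a Serre fibration with fibre $\Diff(S,\partial S)$. Granting that $\mathrm{Sm}(S)$ is weakly contractible (hence, in particular, connected, so that the orbit map is surjective), the long exact sequence of this fibration immediately gives that $\Diff(S,\partial S)\to\Homeo(S,\partial S)$ is a weak homotopy equivalence.

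It then remains to prove $\mathrm{Sm}(S)\simeq\ast$, i.e.\ that smooth structures on a topological $2$-manifold are essentially unique in a parametrised sense, rel the fixed collar. Here I would invoke the smoothing-theory ``exponential law'' (Kirby--Siebenmann, after Morlet), whose input in dimension two — smoothability and triangulability of surfaces and the vanishing of the relevant smoothing obstructions — is classical, going back to Rad\'{o} and Munkres, and is again insensitive to compactness: it identifies $\mathrm{Sm}(S)$, up to weak equivalence, with the space of sections over $S$ (rel the collar) of a bundle with fibre $\mathrm{Top}(2)/O(2)$. Since $\mathrm{Top}(2)\simeq O(2)$ — equivalently $\Homeo(\bR^2)\simeq O(2)$, a classical fact — this fibre is weakly contractible, hence so is the section space, hence so is $\mathrm{Sm}(S)$. (When $S$ is compact the lemma is of course well known and could alternatively be extracted from the known homotopy types of homeomorphism and diffeomorphism groups of compact surfaces, due to Smale, Earle--Eells, Gramain and Hamstrom; but the argument above applies without change to infinite-type $S$.)

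The step I expect to be the main obstacle is the assertion that the push-forward orbit map $f\mapsto f_*\sigma_0$ is a fibration: this requires some care in topologising $\mathrm{Sm}(S)$ and in applying parametrised isotopy extension for homeomorphisms of a possibly non-compact surface. By contrast, the weak contractibility of $\mathrm{Sm}(S)$ is ``soft'' — it is obstruction theory over the $2$-manifold $S$, and every obstruction group in sight vanishes — so no compactness or finite-type hypothesis is needed there either.
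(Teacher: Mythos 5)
Your argument is essentially the same as the paper's. Both reduce the lemma, via two-dimensional smoothing theory \`a la Kirby--Siebenmann and then obstruction theory, to the weak contractibility of $\mathrm{Homeo}(\bR^2)/O(2)$; the only presentational differences are that you phrase the reduction as an orbit fibration $\Diff(S,\partial S)\to\Homeo(S,\partial S)\to\mathrm{Sm}(S)$ over a space of smooth structures rel a collar, whereas the paper works with the quotient $\Homeo(S)/\Diff(S)$ as the homotopy fibre of $B\Diff(S)\to B\Homeo(S)$ and treats the boundary afterwards via the restriction fibrations to $\partial S$ and the five-lemma. These are interchangeable framings of the same smoothing-theory argument.

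One point worth being more careful about: you dispose of the weak equivalence $\Homeo(\bR^2)\simeq O(2)$ as ``a classical fact,'' whereas the paper deliberately breaks it into two pieces and cites precise sources --- surjectivity on $\pi_0$ via the two-dimensional stable homeomorphism theorem (Brown--Gluck, ultimately Rad\'o's annulus theorem) and the isomorphism on $\pi_i$ for $i\ge 1$ via Yagasaki. The remark following the lemma in the paper explains why this care is warranted: the tempting route of combining Morlet's delooping $\Omega^3\bigl(\mathrm{Homeo}(\bR^2)/O(2)\bigr)\simeq\Diff(\bD^2,\partial\bD^2)$ with Smale's contractibility of $\Diff(\bD^2,\partial\bD^2)$ only controls $\pi_i$ for $i\ge 3$, so it does not by itself give contractibility. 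Your proof is correct provided you supply a reference of the same strength for the full statement $\mathrm{Top}(2)\simeq O(2)$.
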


Just as in \cite[\S 5.6]{RWW17}, the boundary connected sum $\natural$ gives a braided monoidal structure on $\cM$. Given two decorated surfaces $(S_1,I_1)$ and $(S_2,I_2)$, their boundary connected sum  $(S_1\natural S_2,I_1\natural I_2)$ is defined to be the surface obtained by gluing $S_1$ and $S_2$ along the right half-interval $I_1^+ \subset \partial S_1$ and the left half-interval $I_2^- \subset \partial S_2$, together with the embedded interval $I_1\natural I_2 \subset \partial(S_1 \natural S_2)$ given by the union $I_1^-\cup I_2^+$ of the other two half-intervals. For an illustration, see \cite[Fig.~2(b), p.~609]{RWW17}. Extending this in the obvious way to morphisms, we obtain an operation $\natural \colon \cM \times \cM \to \cM$ that is associative up to obvious ``bracket-shuffling'' isomorphisms. Although there are isomorphisms $\bD^2 \natural S \cong S \cong S \natural \bD^2$ given by ``absorbing'' the disc into a collar neighbourhood of $S$, these do not satisfy the unit constraints for a monoidal category. Instead, we formally adjoin an additional object $\bI$ to $\cM$ with $\mathrm{End}_\cM(\bI) = \{\mathrm{id}_\bI\}$ and with no morphisms to or from the other objects of $\cM$. We then extend $\natural$ in the unique way so that $\bI$ is a strict unit. Finally, by \cite[Theorem~4.3]{Schauenburg2001}, we may also force \emph{strict} associativity (without changing the underlying category or the unit object) by making careful choices of concrete (set-theoretic) realisations of $S_1 \natural S_2$ for each $S_1$ and $S_2$. Thus $\cM$ is a strict monidal groupoid. It may be equipped with a braiding given by a half Dehn twist in a neighbourhood of the union of based boundaries $\partial_0 S_1 \cup \partial_0 S_2 \subset S_1 \natural S_2$, as pictured in \cite[Fig.~2(c), p.~609]{RWW17}.

We now consider the Quillen bracket category $U\cM$ associated to the braided monoidal groupoid $\cM$ (the construction $U(-)$ is recalled in \S\ref{s:homogeneous-categories}). Since $\cM$ has no zero divisors, $U\cM$ is a pre-braided monoidal category by \cite[Proposition~1.8]{RWW17}. Next, one would like to apply \cite[Theorem~1.10]{RWW17} (recalled as Theorem \ref{thm-grpd-hgca} in \S\ref{s:homogeneous-categories}) to show that $U\cM$ is also a \emph{homogeneous} category. However, this is impossible since condition \eqref{cond-cancellation} (cancellation for the monoid of objects of $\cM$) fails, due to the self-similarity of certain infinite-type surfaces. On the other hand, condition \eqref{cond-injectivity} does hold:

\begin{lem}
\label{lem:injective-map-of-Aut}
Let $(S_1,I_1)$, $(S_2,I_2)$ be two objects in $\cM$. Then the natural map
\begin{equation}
\label{eq:natural-morphism-1}
\iota_\ast \colon \Aut_\cM(S_1,I_1) \longrightarrow \Aut_\cM(S_1 \natural S_2,I_1\natural I_2)
\end{equation}
is injective.
\end{lem}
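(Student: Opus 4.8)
The plan is to realise $(S_1,I_1)$ as the part of $S_1\natural S_2$ lying on one side of the essential separating arc $a$ along which the boundary connected sum was formed, and then to deduce the statement from the fact that cutting a surface along such an arc induces an injection of mapping class groups. Two degenerate cases are immediate and will be dealt with first: if $S_1\cong\bD^2$ then $\Aut_\cM(S_1,I_1)$ is trivial (Alexander trick), and if $S_2\cong\bD^2$ then the absorption homeomorphism $S_1\natural\bD^2\cong S_1$ identifies \eqref{eq:natural-morphism-1} with the identity map; so we may assume that neither $S_1$ nor $S_2$ is a disc.

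The arc $a\subset S_1\natural S_2$ can be chosen with $\partial a\subset I_1\natural I_2$ (a slight push-off of the seam of the boundary connected sum); it separates $S_1\natural S_2$ into two subsurfaces canonically homeomorphic to $S_1$ and $S_2$, and cutting along $a$ recovers $S_1\sqcup S_2$. Since a homeomorphism of $S_1\natural S_2$ fixing $I_1\natural I_2\cup a$ pointwise restricts to homeomorphisms of the two pieces fixing $I_1$, resp.\ $I_2$, and since enlarging the fixed interval from $I_i$ to $I_i$ together with the relevant portion of $a$ does not affect $\pi_0$, cutting along $a$ yields an isomorphism $\pi_0\Homeo_{I_1\natural I_2\cup a}(S_1\natural S_2)\cong\Aut_\cM(S_1,I_1)\times\Aut_\cM(S_2,I_2)$. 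Under this isomorphism the map \eqref{eq:natural-morphism-1} becomes the composite of the inclusion into the first factor with the ``extend by the identity'' homomorphism
\[
c\colon \pi_0\Homeo_{I_1\natural I_2\cup a}(S_1\natural S_2)\longrightarrow\pi_0\Homeo_{I_1\natural I_2}(S_1\natural S_2)=\Aut_\cM(S_1\natural S_2,I_1\natural I_2),
\]
so it suffices to prove that $c$ is injective. For this I would apply the isotopy extension theorem: $h\mapsto h(a)$ defines a Serre fibration from $\Homeo_{I_1\natural I_2}(S_1\natural S_2)$ onto the path component $\cA$ of $a$ in the space of properly embedded arcs with endpoints $\partial a$, with fibre $\Homeo_{I_1\natural I_2\cup a}(S_1\natural S_2)$; the tail of the long exact sequence gives $\ker(c)=\operatorname{im}\bigl(\pi_1(\cA,a)\xrightarrow{\partial}\pi_0\Homeo_{I_1\natural I_2\cup a}(S_1\natural S_2)\bigr)$, so it is enough to know that $\cA$ is simply connected.

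Since $a$ is essential and does not cut off a disc (neither side being a disc), this simple-connectivity is the main point to establish, and it is where the infinite-type setting requires care: for finite-type surfaces it is classical (it is, for instance, implicit in Gramain's computation of the homotopy type of surface diffeomorphism groups, and the injectivity of $c$ is then a special case of the Paris--Rolfsen description of geometric subgroups of mapping class groups), but for infinite-type surfaces one must argue directly. The cleanest route is to note that the identity component of $\Homeo_{I_1\natural I_2}(S_1\natural S_2)$ already acts transitively on $\cA$, so that $\cA\simeq\Homeo_0/\operatorname{Stab}(a)$ with $\operatorname{Stab}(a)$ weakly equivalent to the identity component of the homeomorphism group of the surface obtained by cutting along $a$; both of these are weakly contractible, using that for \emph{any} surface with non-empty compact boundary the identity component of its boundary-fixing homeomorphism group is weakly contractible. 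This last fact holds without finiteness hypotheses --- it can be reduced to the analogous statement for diffeomorphism groups via Lemma~\ref{diff-to-homeo}, or obtained from the Alexander method for infinite-type surfaces --- and it is really the only non-formal ingredient; granting it, $\partial$ is trivial, $c$ is injective, and hence so is \eqref{eq:natural-morphism-1}.
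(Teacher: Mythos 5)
Your proposal takes a genuinely different route from the paper's --- the paper applies the Alexander method to $f$ directly on $S_1$, using the retraction $S_1\natural S_2\to S_1$ together with Epstein's theorems to show that $f$ preserves all isotopy classes of arcs and curves in $S_1$; you instead cut along the separating arc $a$ and reduce to injectivity of the ``extend by identity'' map $c$ via the isotopy-extension fibration. However, the key step of your argument --- simple-connectivity of the arc space $\cA$ --- is circular as written. The stabiliser of $a$ for the $G_0$-action on $\cA$ (where $G_0$ is the identity component of $\Homeo_{I_1\natural I_2}(S_1\natural S_2)$) is $G_0\cap\Homeo_{I_1\natural I_2\cup a}(S_1\natural S_2)$, and this is what your orbit--stabiliser argument needs to be weakly contractible. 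What actually corresponds, after cutting along $a$, to the identity component of the cut surface's homeomorphism group is $\bigl(\Homeo_{I_1\natural I_2\cup a}(S_1\natural S_2)\bigr)_0$, which is a single path-component of the group you need; the discrepancy between the two on $\pi_0$ is exactly $\ker(c)$. Concretely, granting that $G_0$ is weakly contractible, the fibration gives $\pi_1(\cA)\cong\pi_0\bigl(G_0\cap\Homeo_{I_1\natural I_2\cup a}(S_1\natural S_2)\bigr)\cong\ker(c)$, so the identification of the stabiliser you assert is \emph{equivalent} to the injectivity you are trying to establish, and cannot be deduced from the weak contractibility of $G_0$ and of the cut group's identity component alone.

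To salvage this approach you would need a direct proof that $\cA$ is simply connected (or contractible) --- for instance by a Hatcher-flow contraction of the space of arcs with fixed endpoints in a fixed isotopy class, or by extending Gramain's theorem to the infinite-type setting --- rather than deducing it from orbit--stabiliser considerations. Separately, note that your auxiliary input, that the identity component of $\Homeo_\partial(S)$ is weakly contractible for every surface $S$ with non-empty compact boundary, is strictly stronger than what Lemma~\ref{diff-to-homeo} (a $\Diff$-vs-$\Homeo$ comparison) or the Alexander method (which only controls $\pi_0$) can deliver; it would itself require a separate argument for infinite-type surfaces, which the paper does not supply and does not need for its own proof.
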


We give a different proof than that of \cite[Proposition 5.18]{RWW17}.

\begin{proof}
Let $f\in \Aut_\cM(S_1,I_1)$ and suppose $\iota_\ast(f)=1\in \Aut_\cM(S_1 \natural S_2,I_1\natural I_2)$. Our goal is to prove that $f$ is also isotopic to the identity. By the Alexander method (see for example \cite[Proposition 2.8]{FaMa11} for the finite-type case and see \cite{HMV19} for the infinite-type case), it will suffice to show that $f$ preserves the isotopy classes of all simple closed curves and all simple arcs in $S_1$. Let $C$ be a simple closed curve or a simple arc in $S_1$. Since $\iota_*(f) = 1$, we know that $f(C)$ and $C$ are isotopic, thus in particular homotopic, in $S_1 \natural S_2$. There is a retraction $S_1 \natural S_2 \to S_1$, so it follows that $f(C)$ and $C$ are homotopic in $S_1$. It then follows from \cite[Theorem~2.1]{Epstein1966} (if $C$ is a curve) or \cite[Theorem~3.1]{Epstein1966} (if $C$ is an arc) that $f(C)$ and $C$ are ambient isotopic in $S_1$ by an ambient isotopy fixing the boundary of $S_1$. (Epstein requires surfaces to be triangulated; our surfaces are second countable, which implies triangulability by \cite{Rado1925}. For arcs, see also \cite{Feustel1966}.) Thus $f$ is isotopic to the identity by the Alexander method.
\end{proof}

Thus the only property preventing $U\cM$ from being a homogeneous category is the failure of cancellation in $\cM$. We therefore force $\cM$ to satisfy cancellation using the trick explained in \cite[Remark~1.11]{RWW17}: we replace $\cM$ with the groupoid $\bar{\cM}$ whose set of objects is the free monoid on the set of objects of $\cM$, with no morphisms between distinct objects, and with automorphism groups given by
\[
\Aut_{\bar{\cM}}(w) = \Aut_\cM(A_1 \natural A_2 \natural \cdots \natural A_r)
\]
for any word $w = A_1 A_2 \cdots A_r$ on the objects $A_1,A_2,\ldots,A_r$ of $\cM$. The monoidal structure on $\bar{\cM}$ is given by juxtaposition of words, and the unit is the empty word $\varnothing$. In particular, although we may have an isomorphism $A\natural B \cong A$ in $\cM$ for some non-unit object $B$, in $\bar{\cM}$ we instead have $\Hom_{\bar{\cM}}(A\oplus B,A) = \Hom_{\bar{\cM}}(AB,A) = \varnothing$ since the words $AB$ and $A$ are not equal. Having fixed the problem of cancellation, we now have:

\begin{prop}
\label{prop:pre-braided-homogeneous}
Let $\bar{\cM}$ be the braided monoidal groupoid decribed above. Then the Quillen bracket category $U\bar{\cM}$ is pre-braided and homogeneous.
\end{prop}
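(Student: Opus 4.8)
The plan is to apply Theorem~\ref{thm-grpd-hgca} to the groupoid $\bar{\cM}$, so we must verify that $\bar{\cM}$ is a braided monoidal groupoid with no zero divisors satisfying conditions \eqref{cond-cancellation} and \eqref{cond-injectivity} of that theorem. First I would record that $\bar{\cM}$ inherits a braiding: the braiding on $\cM$ and the ``bracket-shuffling'' isomorphisms assemble, via the identification $\Aut_{\bar{\cM}}(w) = \Aut_\cM(A_1 \natural \cdots \natural A_r)$, into braiding isomorphisms $b_{w,w'}$ on $\bar{\cM}$; the braid relations follow from those in $\cM$ together with naturality. Since the monoidal product on $\bar{\cM}$ is juxtaposition of words, associativity is strict and the empty word is a strict unit, so $\bar{\cM}$ is genuinely a (strict) braided monoidal groupoid.

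Next I would check the hypotheses. The \emph{no zero divisors} condition (every object is non-zero, and the unit is indecomposable) is immediate: the only object isomorphic to the empty word is the empty word itself, since $\bar{\cM}$ has no morphisms between distinct words, and juxtaposition of two non-empty words is non-empty. Condition \eqref{cond-cancellation} (cancellation) is exactly the point of passing from $\cM$ to $\bar{\cM}$: in $\bar{\cM}$ the objects form the \emph{free} monoid on the objects of $\cM$, and the free monoid on any set satisfies cancellation (if $wc = w'c$ as words then $w = w'$), and likewise left cancellation; moreover $w \oplus c \cong w' \oplus c$ in $\bar{\cM}$ forces $wc = w'c$ as words since non-equal words are non-isomorphic. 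Hence cancellation holds. Condition \eqref{cond-injectivity} is precisely Lemma~\ref{lem:injective-map-of-Aut}: for words $w = A_1\cdots A_r$ and $w' = B_1 \cdots B_s$, the stabilisation map $\Aut_{\bar{\cM}}(w) \to \Aut_{\bar{\cM}}(w \oplus w')$ is identified, under the definition of $\Aut_{\bar{\cM}}$, with the map $\Aut_\cM(A_1\natural\cdots\natural A_r) \to \Aut_\cM(A_1\natural\cdots\natural A_r \natural B_1 \natural \cdots \natural B_s)$ given by $f \mapsto f \natural \id$, and Lemma~\ref{lem:injective-map-of-Aut} (applied with $S_1 = A_1\natural\cdots\natural A_r$ and $S_2 = B_1\natural\cdots\natural B_s$) shows this is injective.

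With all hypotheses verified, Theorem~\ref{thm-grpd-hgca} applies directly: $U\bar{\cM}$ is a pre-braided monoidal category (from the no-zero-divisors hypothesis alone), and it is moreover homogeneous because conditions \eqref{cond-cancellation} and \eqref{cond-injectivity} hold. This gives the proposition. I expect the only genuinely non-routine ingredient to be the injectivity statement, which is why it has been isolated as Lemma~\ref{lem:injective-map-of-Aut}; everything else is a matter of bookkeeping about the free monoid and transporting the braided structure from $\cM$ along the definition of $\bar{\cM}$. One small point worth making explicit in the write-up is that the identification $\Aut_{\bar{\cM}}(w) = \Aut_\cM(A_1 \natural \cdots \natural A_r)$ is compatible with the monoidal products on the two sides (juxtaposition on the left corresponds to $\natural$ on the right), which is what allows Lemma~\ref{lem:injective-map-of-Aut} and the cancellation argument to be transported from $\cM$-language to $\bar{\cM}$-language.
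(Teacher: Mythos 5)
Your proposal is correct and follows essentially the same route as the paper: apply Theorem~\ref{thm-grpd-hgca} after checking no zero divisors, cancellation (both automatic from freeness of the object monoid of $\bar{\cM}$), and injectivity of the stabilisation maps via Lemma~\ref{lem:injective-map-of-Aut}. The only cosmetic difference is that the paper views the stabilisation map as a composite of $s$ one-step maps of the form \eqref{eq:natural-morphism-1}, whereas you apply Lemma~\ref{lem:injective-map-of-Aut} once with $S_2 = B_1 \natural \cdots \natural B_s$; both are valid.
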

\begin{proof}
By Theorem \ref{thm-grpd-hgca} (Proposition 1.8 and Theorem 1.10 of \cite{RWW17}), it suffices to check that $\bar{\cM}$ has no zero divisors and satisfies cancellation, and that the natural morphism
\begin{equation}
\label{eq:natural-morphism-2}
\Aut_{\bar{\cM}}(w_1) \longrightarrow \Aut_{\bar{\cM}}(w_1 w_2)
\end{equation}
is injective for any two words $w_1,w_2$ in the objects of $\cM$. No zero divisors and cancellation are automatic from the fact that (by definition) the monoid of objects of $\bar{\cM}$ is free. If we write $w_1 = A_1 A_2 \cdots A_r$ and $w_2 = B_1 B_2 \cdots B_s$, the morphism \eqref{eq:natural-morphism-2} may be written as
\[
\Aut_\cM(A_1 \natural A_2 \natural \cdots \natural A_r) \longrightarrow \Aut_\cM(A_1 \natural A_2 \natural \cdots \natural A_r \natural B_1 \natural B_2 \natural \cdots \natural B_s).
\]
This is a composition of $s$ morphisms of the form \eqref{eq:natural-morphism-1}, so it is injective by Lemma \ref{lem:injective-map-of-Aut}.
\end{proof}

We note that we have not changed the isomorphisms of the category by passing from $\bar{\cM}$ to its Quillen bracket construction $U\bar{\cM}$.

\begin{lem}
\label{lem:underlying-groupoid}
There is a canonical isomorphism of groupoids
\[
\bar{\cM} \cong \mathrm{Iso}(U\bar{\cM}),
\]
where $\mathrm{Iso}(\cC)$ denotes the underlying groupoid of $\cC$. In particular, automorphism groups in $U\bar{\cM}$ are the same as automorphism groups in $\bar{\cM}$.
\end{lem}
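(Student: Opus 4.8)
The statement to prove is Lemma~\ref{lem:underlying-groupoid}: the canonical isomorphism $\bar{\cM} \cong \mathrm{Iso}(U\bar{\cM})$.

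\medskip

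The plan is to exhibit an explicit functor $\Phi \colon \bar{\cM} \to \mathrm{Iso}(U\bar{\cM})$ and check that it is an isomorphism of groupoids. On objects, $\Phi$ is the identity, since $U\bar{\cM}$ has by construction the same objects as $\bar{\cM}$. On morphisms, recall that the Quillen bracket construction sends a morphism $\phi \colon A \to B$ of the groupoid $\bar{\cM}$ to the morphism $[0,\phi] \colon A \to B$ of $U\bar{\cM}$, where we use that $0 \oplus A = A$ (the empty word is a strict unit). So I would set $\Phi(\phi) = [0,\phi]$. First I would check this is a well-defined functor: it is immediate from the definition of composition in $U\bar{\cM}$ that $[0,\psi] \circ [0,\phi] = [0,\psi\circ\phi]$ and that $[0,\id_A] = \id_A$, so $\Phi$ is functorial, and its image lands in the underlying groupoid since every $[0,\phi]$ with $\phi$ an isomorphism of $\bar{\cM}$ is invertible (with inverse $[0,\phi^{-1}]$).

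\medskip

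Next I would show $\Phi$ is bijective on each hom-set. Faithfulness: if $[0,\phi] = [0,\phi']$ in $U\bar{\cM}$, then by definition of the equivalence relation defining morphisms of $U\bar{\cM}$ there is an isomorphism $g \colon 0 \to 0$ in $\bar{\cM}$ with $\phi' \circ (g \oplus \id_A) = \phi$; but $\Aut_{\bar{\cM}}(0) = \{\id\}$ since $0$ is the empty word, so $g = \id$ and hence $\phi = \phi'$. Fullness onto isomorphisms: let $[X,f] \colon A \to B$ be an isomorphism in $U\bar{\cM}$, where $f \colon X \oplus A \to B$ is a morphism of $\bar{\cM}$, i.e.\ an isomorphism. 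Since $\bar{\cM}$ has no morphisms between distinct words, the existence of the isomorphism $f$ forces $X \oplus A = B$ as words; writing $B = XA$ we need to show that in fact $X$ must be the empty word whenever $[X,f]$ is invertible in $U\bar{\cM}$. Here is the key point: an inverse of $[X,f]$ would be a morphism $[Y,g] \colon B \to A$ with $g \colon Y \oplus B \to A$ an isomorphism in $\bar{\cM}$, so $Y \oplus B = A$ as words, i.e.\ $YXA = A$; comparing lengths of words forces $Y$ and $X$ to be empty. Thus $X = 0$, and $[0,f] = \Phi(f)$, giving fullness.

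\medskip

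The main obstacle -- and the only genuinely substantive point -- is the fullness argument just sketched: one must rule out ``non-trivial'' morphisms $[X,f]$ with $X \neq 0$ from being isomorphisms. This is exactly where passing from $\cM$ to $\bar{\cM}$ pays off: in $\cM$ one could have $A \natural B \cong A$ for a non-unit $B$, so a morphism $[B,f]$ with $B \neq 0$ could be an isomorphism in $U\cM$, and the lemma would \emph{fail} for $U\cM$. But in $\bar{\cM}$ the monoid of objects is the free monoid on the objects of $\cM$, and in a free monoid $u v = w$ together with $w$ dividing back to $u$ via another factorisation forces the extra factors to be the identity -- a length/cancellation argument on words. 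I would state this cleanly as: in $\bar{\cM}$, if $[X,f] \colon A \to B$ is invertible then $X = \varnothing$ (the empty word), because invertibility produces words $X, Y$ with $YX A = A$ in the free monoid, whence $X = Y = \varnothing$. Once this is in place, the identification $\Phi$ is manifestly an isomorphism of groupoids, and the final sentence of the lemma (equality of automorphism groups) is the special case $A = B$.
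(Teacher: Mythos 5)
Your proof is correct, and the substantive facts you verify are precisely the two hypotheses that the paper checks in order to invoke \cite[Proposition 1.7]{RWW17}: namely, that $\Aut_{\bar{\cM}}(\varnothing)$ is trivial (your faithfulness step) and that the monoid of objects of $\bar{\cM}$ is free, hence cancellative with no zero divisors (your fullness step, ruling out $X\neq\varnothing$). The only difference is packaging: the paper cites the general criterion from \cite{RWW17}, while you unfold the Quillen bracket construction and check directly that $\phi\mapsto[0,\phi]$ is a bijective-on-objects, fully faithful functor onto the underlying groupoid. Both routes buy the same thing; yours is self-contained at the cost of reproving a general lemma in a special case. One small point worth making explicit in your faithfulness step: the triviality of $\Aut_{\bar{\cM}}(\varnothing)$ is not automatic for a unit object --- it holds here because the unit $\bI$ of $\cM$ was \emph{formally} adjoined with $\mathrm{End}_\cM(\bI)=\{\mathrm{id}_\bI\}$, and by definition $\Aut_{\bar{\cM}}(\varnothing)=\Aut_\cM(\bI)$. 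Saying ``since $0$ is the empty word'' glosses over this.
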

\begin{proof}
By \cite[Proposition 1.7]{RWW17}, it suffices to check that the automorphism group $\Aut_{\bar{\cM}}(\varnothing)$ is trivial and that the monoid of (isomorphism classes of) objects of $\bar{\cM}$ has no zero-divisors. The first statement follows from the fact that $\Aut_{\bar{\cM}}(\varnothing) = \Aut_\cM(\bI) = \{\mathrm{id}_\bI\}$ by definition of $\bar{\cM}$ and by definition of the (formally adjoined) unit of $\cM$. The second statement follows from the fact that the monoid of (isomorphism classes of) objects of $\bar{\cM}$ is free.
\end{proof}

\subsection{Reduction to high-connectivity}\label{s:reduction}

Applied to the pre-braided homogeneous category $U\bar{\cM}$ (see Proposition \ref{prop:pre-braided-homogeneous}), Theorem A of \cite{RWW17}, applied to the constant coefficient system $\bZ$, says the following. Let $A$ and $X$ be two decorated surfaces (possibly of infinite type), each with exactly one boundary component.

\begin{thm}[{\cite[Theorem~A~applied to $U\bar{\cM}$]{RWW17}}]
\label{thm:RWW}
Suppose that the semi-simplicial set $W_n(A,X)_\bullet$ from Definition \ref{defn-W_n(A,X)} is $(\tfrac{n-2}{2})$-connected for each $n\geq 1$. Then the stabilisation map
\begin{equation}
\label{eq:stab-map-general}
\mathrm{Map}(A \natural X^{\natural n}) \longrightarrow \mathrm{Map}(A \natural X^{\natural n+1})
\end{equation}
induces isomorphisms \textup{(}surjections\textup{)} on $H_i(-;\bZ)$ for each $n\geq 1$ and for $i\leq \tfrac{n-2}{2}$ \textup{(}for $i\leq \tfrac{n}{2}$\textup{)}.
\end{thm}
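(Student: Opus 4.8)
The plan is to derive the statement as a direct application of \cite[Theorem~A]{RWW17} to the pre-braided homogeneous category $U\bar{\cM}$ constructed above; the entire content is then a matter of matching the abstract categorical data against the concrete mapping class groups and stabilisation maps. By Proposition~\ref{prop:pre-braided-homogeneous} the category $U\bar{\cM}$ is pre-braided and homogeneous, so every hypothesis of \cite[Theorem~A]{RWW17} --- applied with the constant coefficient system $\bZ$, which is polynomial of degree $0$ --- is in place, with the sole exception of the connectivity requirement on the semi-simplicial sets $W_n(A,X)_\bullet$ of Definition~\ref{defn-W_n(A,X)}, and that is precisely the hypothesis we are assuming. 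Unwinding the Quillen bracket construction, a $p$-simplex of $W_n(A,X)_\bullet$ is a morphism $X^{\oplus p+1}\to A\oplus X^{\oplus n}$ in $U\bar{\cM}$, i.e.\ geometrically an isotopy class of embeddings of $p+1$ disjoint copies of $X$ into $A\natural X^{\natural n}$, so the object whose high connectivity we assume is exactly the one fed into the machine of \cite{RWW17}.

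It remains to identify the automorphism groups and the stabilisation maps. By Lemma~\ref{lem:underlying-groupoid} we have $\Aut_{U\bar{\cM}}(w)=\Aut_{\bar{\cM}}(w)$ for every word $w$ on the objects of $\cM$, and by the definition of $\bar{\cM}$ we have $\Aut_{\bar{\cM}}(AX^{n})=\Aut_{\cM}(A\natural X^{\natural n})$, which by the definition of $\cM$ is the group of isotopy classes of homeomorphisms of the surface $S:=A\natural X^{\natural n}$ that fix the parametrised interval $I$ in its boundary. Since $A$ and $X$ each have exactly one boundary component, so does $S$, and in this case fixing $I$ pointwise and fixing the entire boundary give the same mapping class group: the restriction map $\Homeo_I(S)\to\Homeo_I(\partial_0 S)$ is a fibration with fibre $\Homeo_\partial(S)$, and its base --- the group of self-homeomorphisms of a circle that fix a closed sub-arc pointwise --- is contractible (it deformation retracts onto the identity), so the fibre inclusion is a weak equivalence and $\Aut_{U\bar{\cM}}(S)\cong\Map(S)$. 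Chasing the (strict) boundary-connected-sum monoidal structure through this dictionary identifies the categorical stabilisation map $f\mapsto f\oplus\id_X$ with the map \eqref{eq:stab-map-general} that extends a homeomorphism by the identity on a newly glued copy of $X$.

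With these identifications in hand, \cite[Theorem~A]{RWW17} applied to $U\bar{\cM}$ with $\bZ$-coefficients yields at once that \eqref{eq:stab-map-general} induces an isomorphism on $H_i(-;\bZ)$ for $i\leq\tfrac{n-2}{2}$ and a surjection for $i\leq\tfrac{n}{2}$, as asserted. I do not expect any genuine obstacle here: the only delicate point is bookkeeping --- verifying that the semi-simplicial sets $W_n(A,X)_\bullet$, the automorphism groups, and the stabilisation maps produced by the general formalism of \cite{RWW17} really are the geometric objects named in the statement --- and this is routine once the homogeneous category $U\bar{\cM}$ has been set up as in \S\ref{s:homogeneous-for-inf-surfaces}.
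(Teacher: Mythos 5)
Your proposal is correct and follows the paper's own (very short) proof essentially verbatim: Theorem \ref{thm:RWW} is stated as a citation of \cite[Theorem~A]{RWW17} applied to the pre-braided homogeneous category $U\bar{\cM}$ of Proposition \ref{prop:pre-braided-homogeneous}, and the only content beyond that citation is the identification of $\Aut_{U\bar{\cM}}(A \natural X^{\natural n})$ with $\Map(A \natural X^{\natural n})$ via Lemma \ref{lem:underlying-groupoid} and the definitions of $\bar{\cM}$ and $\cM$, which you reproduce. You actually spell out one step more carefully than the paper's remark does --- why $\pi_0(\Homeo_I(S)) \cong \pi_0(\Homeo_\partial(S))$ once $S$ has a single boundary circle, via contractibility of $\Homeo_I(S^1)$ and the restriction fibration --- which is a welcome clarification of a point the paper leaves implicit in the chain of equalities.
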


\begin{rem}
We note that the statement in \cite{RWW17} has a stabilisation map of the form $\mathrm{Aut}_{\cU\bar{\cM}}(A \natural X^{\natural n}) \to \mathrm{Aut}_{\cU\bar{\cM}}(A \natural X^{\natural n+1})$ in place of \eqref{eq:stab-map-general}; we have applied the identifications
\begin{align*}
\mathrm{Aut}_{\cU\bar{\cM}}(A \natural X^{\natural n}) &= \mathrm{Aut}_{\bar{\cM}}(A \natural X^{\natural n}) \\
&= \mathrm{Aut}_{\cM}(A \natural X^{\natural n}) \\
&= \mathrm{Map}(A \natural X^{\natural n}),
\end{align*}
which follow by Lemma \ref{lem:underlying-groupoid} and the definitions of $\bar{\cM}$ and $\cM$ respectively. Notice that \eqref{eq:stab-map-general} is the map \eqref{eq:stab-maps} from the statement of Theorem \ref{thm:hom-stab}.
\end{rem}

\begin{cor}
\label{coro:reduction-to-high-conn}
Let $\Sigma$ be a connected surface without boundary. Suppose that the semi-simplicial set $W_n(\bD^2,\mathfrak{B}(\Sigma))_\bullet$ from Definition \ref{defn-W_n(A,X)} is $(\tfrac{n-2}{2})$-connected for each $n\geq 1$. Then the mapping class group $\Map(\mathfrak{B}(\Sigma))$ is acyclic.
\end{cor}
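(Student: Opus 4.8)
The plan is to deduce from the connectivity hypothesis, via Theorem~\ref{thm:RWW}, that the stabilisation map $e$ of \eqref{eq:stabilisation-e} induces an isomorphism on homology in every degree, and then to invoke Proposition~\ref{prop:acyclicity-assuming-stability}. First I would apply Theorem~\ref{thm:RWW} with $A = \bD^2$ and $X = \mathfrak{B}(\Sigma)$; both are decorated surfaces with exactly one boundary component, and the connectivity hypothesis of Theorem~\ref{thm:RWW} for this pair is precisely the hypothesis of the corollary. This yields that, for each $n \geq 1$, the $n$-th stabilisation map $s_n \colon \Map(\bD^2 \natural \mathfrak{B}(\Sigma)^{\natural n}) \to \Map(\bD^2 \natural \mathfrak{B}(\Sigma)^{\natural n+1})$ is an isomorphism on $H_i(-;\bZ)$ for all $i \leq \tfrac{n-2}{2}$.

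The next step is to exploit the self-similarity of binary tree surfaces. Absorbing a disc into a collar identifies $\bD^2 \natural \mathfrak{B}(\Sigma)^{\natural n}$ with $\mathfrak{B}(\Sigma)^{\natural n}$, and, since $\mathfrak{B}(\Sigma)$ is homeomorphic to $\mathfrak{B}(\Sigma) \natural \mathfrak{B}(\Sigma)$ (by the classification of surfaces, as observed in the proof of Theorem~\ref{thm:hom-stab}), iterating identifies $\mathfrak{B}(\Sigma)^{\natural n}$ with $\mathfrak{B}(\Sigma)$ for every $n \geq 1$. As in the proof of case~\ref{case:Cantor} of Theorem~\ref{thm:hom-stab}, I would choose these homeomorphisms compatibly, producing isomorphisms $\theta_n \colon \Map(\bD^2 \natural \mathfrak{B}(\Sigma)^{\natural n}) \to \Map(\mathfrak{B}(\Sigma))$ and a single self-homomorphism $\sigma$ of $\Map(\mathfrak{B}(\Sigma))$ with $\theta_{n+1} \circ s_n = \sigma \circ \theta_n$ for all $n \geq 1$, where $\sigma$ is identified with $e$ (post-composed with the disc-absorption isomorphism $\Map(\mathfrak{B}(\Sigma) \natural \mathfrak{B}(\Sigma)) \cong \Map(\mathfrak{B}(\Sigma))$). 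Since each $\theta_n$ is an isomorphism, $\sigma = \theta_{n+1} \circ s_n \circ \theta_n^{-1}$ is an isomorphism on $H_i(-;\bZ)$ whenever $s_n$ is, i.e.\ for all $i \leq \tfrac{n-2}{2}$; letting $n \to \infty$ forces $\sigma$, and hence $e$, to be a homology isomorphism in every degree. Proposition~\ref{prop:acyclicity-assuming-stability} then gives that $\Map(\mathfrak{B}(\Sigma))$ is acyclic.

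Apart from citing the earlier results, the only point requiring genuine care is the construction of the commuting ladder $\{\theta_n\}$: one must choose the self-similarity homeomorphisms $\mathfrak{B}(\Sigma)^{\natural n} \cong \mathfrak{B}(\Sigma)$ compatibly with the boundary connected-sum stabilisations, not merely note that they exist, since it is precisely this compatibility that lets the infinite stable range propagate down to the initial map $e$. I expect this to be the main (and essentially the only) obstacle; it is the same point used, more briefly, in establishing case~\ref{case:Cantor} of Theorem~\ref{thm:hom-stab}, so I would either reference that argument or spell it out here.
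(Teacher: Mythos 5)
Your proof is correct and follows essentially the same route as the paper: apply Theorem~\ref{thm:RWW}, use the self-similarity $\mathfrak{B}(\Sigma) \natural \mathfrak{B}(\Sigma) \cong \mathfrak{B}(\Sigma)$ to identify each stabilisation map with $e$, deduce that $e$ is a homology isomorphism in all degrees, and conclude via Proposition~\ref{prop:acyclicity-assuming-stability}. The ``compatibility'' concern you flag as the main obstacle is in fact automatic and requires no coherent choice across $n$: for each fixed $n$, any decorated diffeomorphism $\phi_n \colon \mathfrak{B}(\Sigma)^{\natural n} \to \mathfrak{B}(\Sigma)$ yields a commuting square identifying $s_n$ with $e$, because extending by the identity on the last summand commutes with conjugation by $\phi_n$ on the source and by $\phi_n \natural \id$ on the target.
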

\begin{proof}
Consider the homomorphism
\[
e_n \colon \Map(\mathfrak{B}(\Sigma)^{\natural n}) \longrightarrow \Map(\mathfrak{B}(\Sigma)^{\natural n+1})
\]
induced by including $\mathfrak{B}(\Sigma)^{\natural n}$ into $\mathfrak{B}(\Sigma)^{\natural n+1}$ and extending diffeomorphisms by the identity on the new $\mathfrak{B}(\Sigma)$ summand. By Theorem \ref{thm:RWW}, this map induces isomorphisms on $H_i(-;\bZ)$ for all $n\geq 2i+2$. Via the self-similarity of the binary tree $\mathfrak{B}$, there is a homeomorphism $\mathfrak{B}(\Sigma) \natural \mathfrak{B}(\Sigma) \cong \mathfrak{B}(\Sigma)$, which, by iteration, induces a homeomorphism $\mathfrak{B}(\Sigma)^{\natural n} \cong \mathfrak{B}(\Sigma)$ for all $n\geq 1$. Using this, we may identify the homomorphism $e_n$ with the homomorphism $e_1 = e = \eqref{eq:stabilisation-e}$ for all $n\geq 1$ and deduce that this map is a homology isomorphism in all degrees. Proposition \ref{prop:acyclicity-assuming-stability} then implies that $\Map(\mathfrak{B}(\Sigma))$ is acyclic.
\end{proof}

Proving Theorems \ref{thm:disc-minus-Cantor} and \ref{thm:hom-stab} has therefore been reduced to proving that the semi-simplicial set $W_n(A,X)_\bullet$ is $(\tfrac{n-2}{2})$-connected for any $A$ and for each $X$ in the list in the statement of Theorem \ref{thm:hom-stab}. (Theorem \ref{thm:disc-minus-Cantor} just requires the case of $A = \bD^2$ and $X = \mathfrak{B}(\Sigma)$.)

\subsection{From semi-simplicial sets to simplicial complexes}
\label{s:simplicial-sets-complexes}

As a first step towards studying the connectivity of $W_n(A,X)_\bullet$, we will replace it by its associated simplicial complex $S_n(A,X)$ (Proposition \ref{prop:from-W-to-S}).

\begin{defn}
For any semi-simplicial set $Y_\bullet$, the simplicial complex $K(Y_\bullet)$ is defined to have objects $Y_0$ and a collection $\{y_0,\ldots,y_p\} \subseteq Y_0$ spans a $p$-simplex if and only if there exists $\sigma \in Y_p$ such that
\[
\{ f_0(\sigma) , \ldots , f_p(\sigma) \} = \{ y_0 , \ldots , y_p \},
\]
where $f_0,\ldots,f_p \colon Y_p \to Y_0$ are the $p+1$ distinct compositions of face maps of $Y_\bullet$.
\end{defn}

The following lemma is immediate from the definitions of geometric realisations of semi-simplicial sets and simplicial complexes respectively.

\begin{lem}
\label{lem:homeomorphism-geometric-realisations}
Suppose that the semi-simplicial set $Y_\bullet$ has the property that, for every simplex $\sigma \in Y_p$, the vertices $f_0(\sigma),\ldots,f_p(\sigma) \in Y_0$ are distinct. Moreover, suppose that each $\sigma \in Y_p$ is determined by the unordered set $\{ f_0(\sigma),\ldots,f_p(\sigma) \} \subseteq Y_0$. Then there is a homeomorphism
\[
\lvert Y_\bullet \rvert \cong \lvert K(Y_\bullet) \rvert .
\]
\end{lem}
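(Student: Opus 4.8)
The statement to prove is Lemma~\ref{lem:homeomorphism-geometric-realisations}: under the two hypotheses (every simplex has distinct vertices under the face maps, and each simplex is determined by its unordered vertex set), the geometric realisations $\lvert Y_\bullet \rvert$ and $\lvert K(Y_\bullet) \rvert$ are homeomorphic.

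My plan is to construct an explicit homeomorphism cell by cell. First I would recall the two relevant constructions. The geometric realisation of the semi-simplicial set $Y_\bullet$ is $\bigl( \bigsqcup_{p \geq 0} Y_p \times \Delta^p \bigr) / {\sim}$, where the equivalence relation is generated by $(f_i(\sigma), x) \sim (\sigma, \delta^i(x))$ for $\sigma \in Y_p$ and $x \in \Delta^{p-1}$, with $\delta^i \colon \Delta^{p-1} \to \Delta^p$ the $i$th coface inclusion. The geometric realisation of the simplicial complex $K(Y_\bullet)$ can be described as $\bigl( \bigsqcup_{p \geq 0} K(Y_\bullet)_p \times \Delta^p \bigr) / {\sim}$ as well, where now $K(Y_\bullet)_p$ denotes the set of $p$-simplices of the complex, each regarded as a \emph{totally ordered} $(p+1)$-tuple of vertices (picking the total order on $Y_0$ — or rather, to be canonical, realising $\lvert K(Y_\bullet)\rvert$ via ordered simplices and the standard face identifications).

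Next I would define the map $\Phi \colon \lvert Y_\bullet \rvert \to \lvert K(Y_\bullet) \rvert$ on representatives: send $(\sigma, x) \in Y_p \times \Delta^p$ to $(\tau, x') \in K(Y_\bullet)_p \times \Delta^p$, where $\tau$ is the $p$-simplex of $K(Y_\bullet)$ spanned by the vertex set $\{f_0(\sigma), \ldots, f_p(\sigma)\}$ (which has exactly $p+1$ elements by the first hypothesis, so is indeed a non-degenerate $p$-simplex), and $x'$ is $x$ with its barycentric coordinates permuted according to the permutation that sorts $(f_0(\sigma), \ldots, f_p(\sigma))$ into the chosen order on $K(Y_\bullet)_p$. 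The key points to verify are: (i) $\Phi$ is well-defined on the quotient, i.e.\ compatible with the face identifications on both sides — this is a routine check that the coface maps $\delta^i$ intertwine correctly with the sorting permutations, using that $f_j \circ (\text{face map}) $ corresponds to dropping the $j$th vertex; (ii) $\Phi$ is a bijection — injectivity uses the second hypothesis (a simplex $\sigma$ is recovered from its vertex set, hence from $\tau$), and surjectivity is immediate from the definition of $K(Y_\bullet)$ (every simplex of $K(Y_\bullet)$ comes from some $\sigma \in Y_\bullet$); (iii) $\Phi$ is continuous with continuous inverse — both realisations carry the quotient (weak) topology, and $\Phi$ is induced by a continuous map on the disjoint union of the cells that is a homeomorphism on each cell $\{\sigma\} \times \Delta^p$ (it is just a coordinate permutation of $\Delta^p$), so it descends to a homeomorphism of quotients.

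I expect the main obstacle to be purely bookkeeping rather than conceptual: checking compatibility of $\Phi$ with the face relations requires tracking how the sorting permutation of the vertex tuple of $\sigma$ relates to the sorting permutation of the vertex tuple of $f_i(\sigma)$, and making sure the coface map $\delta^i$ on $\Delta^p$ is carried to the correct coface map on the target. This is where the hypothesis that the vertices $f_0(\sigma), \ldots, f_p(\sigma)$ are \emph{distinct} is essential (otherwise the "sorting permutation" is not well-defined and the target simplex would be degenerate), and where the hypothesis that $\sigma$ is determined by its unordered vertex set is essential (for injectivity, and also to ensure the assignment $\sigma \mapsto \tau$ does not collapse distinct cells in a way that breaks the cell structure). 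Since the excerpt itself notes that this lemma "is immediate from the definitions", I would keep the write-up short: state the two realisations as quotients of $\bigsqcup_p (\cdot) \times \Delta^p$, define $\Phi$ by the vertex-sorting recipe above, and remark that well-definedness, bijectivity and bicontinuity all follow directly from the two hypotheses and the definition of $K(Y_\bullet)$, without spelling out the index juggling.
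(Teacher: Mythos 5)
Your proposal is correct. The paper itself gives no proof — it simply declares the lemma ``immediate from the definitions'' — and your cell-by-cell construction (choose a total order on $Y_0$, realise $K(Y_\bullet)$ via ordered simplices, send $(\sigma,x)$ to the sorted simplex with the correspondingly permuted barycentric coordinates) is exactly the standard way to make that assertion precise; you also correctly isolate where the two hypotheses enter (distinctness of vertices so that the target simplex is a genuine non-degenerate $p$-simplex and the sorting permutation is well-defined; uniqueness of $\sigma$ given its unordered vertex set so that the cellwise map is a bijection).
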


In our setting, we will denote the associated simplicial complex as follows (this agrees with Definition \ref{defn:Sn(X,A)} in \S\ref{s:homogeneous-categories}, which in turn is \cite[Definition~2.8]{RWW17}).

\begin{defn}
For any pair of decorated surfaces $(A,X)$, let $S_n(A,X) = K(W_n(A,X)_\bullet)$.
\end{defn}

\begin{prop}
\label{prop:from-W-to-S}
For a pair of decorated surfaces $(A,X)$ with $X \neq \bD^2$, the associated semi-simplicial set $W_n(A,X)_\bullet$ satisfies the conditions of Lemma \ref{lem:homeomorphism-geometric-realisations}. Thus we have homeomorphisms
\[
\lvert W_n(A,X)_\bullet \rvert \cong \lvert S_n(A,X) \rvert .
\]
\end{prop}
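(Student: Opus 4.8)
The plan is to unwind the definition of $W_n(A,X)_\bullet$ and its face maps through the Quillen bracket construction $U\bar{\cM}$, re-express everything in terms of subsurfaces of $W:=A\natural X^{\natural n}$, and then check the two hypotheses of Lemma \ref{lem:homeomorphism-geometric-realisations} by hand. By Lemmas \ref{lem:underlying-groupoid} and \ref{lem:injective-map-of-Aut}, a $p$-simplex of $W_n(A,X)_\bullet$ is a morphism $[Z,h]\colon X^{\oplus p+1}\to W$ of $U\bar{\cM}$, in which the complementary word $Z$ is forced (the object monoid of $\bar{\cM}$ is free) and $h$ ranges over $\Aut_{\bar{\cM}}(W)=\Map(W)$ modulo the subgroup $H$ which is the image of $\Aut_{\bar{\cM}}(Z)$ under extension by the identity; its $j$-th vertex is $f_j([Z,h])=[Z,h]\circ i_j$, computed from the braiding element $\gamma_j\in\Aut_{\bar{\cM}}(X^{\oplus p+1})$ in the factorisation $i_j=[X^{\oplus p},\gamma_j]$. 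Geometrically, $[Z,h]$ is an isotopy class of an ordered $(p+1)$-tuple of pairwise disjoint copies of $X$ inside $W$, attached to $\partial_0 W$ in a prescribed cyclic order, and $f_j$ records the $j$-th of these.

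For hypothesis (i), any two faces $f_j(\sigma),f_k(\sigma)$ of a $p$-simplex $\sigma$ are the two vertices of the $1$-dimensional face of $\sigma$ that they span, so it is enough to show that every $1$-simplex $\tau=[Z,h]\colon X^{\oplus 2}\to W$ has $\tau\circ i_0\ne\tau\circ i_1$. Unwinding, $\tau\circ i_1=[Z\natural X,h]$ and $\tau\circ i_0=[Z\natural X,h\circ(\id_Z\oplus b_{X,X})]$, and these agree exactly when the braiding $b_{X,X}\in\Map(X\natural X)$ is supported away from the second copy $X_2$ of $X$. But $b_{X,X}$ carries the first copy $X_1$ onto $X_2$ up to isotopy rel the based interval, whereas $X_1$ and $X_2$ are not isotopic rel the based interval; this is detected on $H_1$, using that $H_1(X)\ne 0$ because a connected surface with boundary and $H_1=0$ is a disc and $X\ne\bD^2$. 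Hence $b_{X,X}$ is not supported away from $X_2$, which gives (i). (When $X=\bD^2$ one has $b_{X,X}=\id$ and the two faces of a $1$-simplex coincide, which is exactly why this case is excluded.)

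For hypothesis (ii) I would argue in two steps. First, a $p$-simplex is determined by its \emph{ordered} face tuple: in the coset description $W_n(A,X)_p=\Map(W)/H$ and $W_n(A,X)_0=\Map(W)/K$, where $K\supseteq H$ consists of the classes supported away from the last copy of $X$, one has $f_j(fH)=(f\tilde{\gamma}_j)K$ with $\tilde{\gamma}_j=\id_Z\oplus\gamma_j$, and conjugating $K$ by $\tilde{\gamma}_j$ produces the classes supported away from the $j$-th of the last $p+1$ copies of $X$; using the standard fact that a mapping class supported away from each of a finite family of disjoint (admissible) subsurfaces is supported away from their union, one gets $\bigcap_{j=0}^{p}\tilde{\gamma}_j K\tilde{\gamma}_j^{-1}=H$, which forces $fH=f'H$ once $f$ and $f'$ have the same ordered face tuple. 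Second, two $p$-simplices with the same \emph{unordered} vertex set have the same ordered face tuple: by (i) the faces of a simplex are $p+1$ distinct vertices, and a nontrivial permutation $\pi$ matching the two tuples would yield a self-homeomorphism of $W$ fixing the based interval and permuting, according to $\pi$, the linearly ordered family of the attaching intervals of the $X$-copies along $\partial_0 W$; since any such homeomorphism preserves the cyclic order of those intervals, $\pi$ must be the identity. Combining the two steps gives (ii), and Lemma \ref{lem:homeomorphism-geometric-realisations} then yields the homeomorphisms $\lvert W_n(A,X)_\bullet\rvert\cong\lvert S_n(A,X)\rvert$.

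The step I expect to be most delicate is the geometric translation of the Quillen-bracket data: identifying precisely which subsurface the braiding $\gamma_j$ (equivalently $b_{X,X}$) moves a given copy of $X$ to, and keeping careful track of the based interval and of the cyclic ordering of the attaching intervals along $\partial_0 W$. Once those translations are in place, both hypotheses reduce to elementary mapping-class-group facts: homological detection of non-isotopic subsurfaces, the support lemma for disjoint subsurfaces, and order-preservation on the based boundary. This parallels the relationship between $W_n$ and $S_n$ in \cite[\S 2]{RWW17}; the only new feature is that the verification goes through verbatim for possibly infinite-type surfaces, as no finiteness hypothesis is used.
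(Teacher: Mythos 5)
Your plan is sound and in fact parallels the paper's: distinctness of vertices; determination by the ordered face tuple; then passage from ordered to unordered. (The paper reaches the middle step via the ``locally standard'' criterion of \cite[Proposition~2.6]{RWW17} rather than arguing it by hand, but the content is the same.) Your verification of hypothesis (i) by $H_1$-detection is essentially the paper's proof of its condition (2) and is correct.

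Both halves of your argument for hypothesis (ii) have real gaps. In Step 1 you cite as ``standard'' the fact that a mapping class supported away from each of several disjoint subsurfaces is supported away from their union. This is precisely what the paper proves as its ``Claim'' --- via a stable Alexander system adapted to the decomposition, following \cite{HMV19} --- and that care is taken exactly to handle infinite-type surfaces. Taking it for granted also undercuts your closing remark that ``no finiteness hypothesis is used'': this Claim is the one place the paper must do extra work precisely to remove finiteness. In Step 2, the cyclic-order argument does not actually yield $\pi = \id$. A homeomorphism $\Psi$ carrying a geometric representative of $\sigma$ to one of $\sigma'$ does preserve cyclic order, but it sends the $j$-th arc of $\sigma$ to the $j$-th arc of $\sigma'$; the comparison of the latter with the $\pi(j)$-th arc of $\sigma$ is an isotopy of a single admissible embedding, and such an isotopy is free to drag its arc across the surface and change its position near the basepoint, so ``$\Psi$ preserves cyclic order'' gives no information about $\pi$. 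Observe that your Step 2 never invokes $\pi_1(X) \neq 1$ directly (only through (i), to know the vertices are distinct), whereas the paper's argument for its condition (3) hinges on a $\pi_1(X)$-obstruction: re-ordering the arcs while keeping them disjoint from each other and from the embedded $X$-copies would force some arc to be dragged around a copy of $X$, changing its homotopy class --- which is exactly what fails when $X = \bD^2$.
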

\begin{proof}
First of all, we recall from \cite[Definition~2.5]{RWW17} that a homogeneous category $\cC$ is \emph{locally standard} at a pair of objects $(A,X)$ if the two canonical morphisms $X \to A \oplus X \oplus X$ are distinct and the map
\begin{equation}
\label{eq:locally-standard-map}
\Hom_\cC(X , A \oplus X^{\oplus n-1}) \longrightarrow \Hom_\cC(X , A \oplus X^{\oplus n})
\end{equation}
sending $f$ to $f \oplus \iota_X$ is injective. By \cite[Proposition~2.6]{RWW17}, this condition implies that any simplex $\sigma \in W_n(A,X)_p$ has distinct vertices $f_0(\sigma),\ldots,f_p(\sigma)$ in $W_n(A,X)_0$ and also that each $\sigma \in W_n(A,X)_p$ is determined by its \emph{ordered} list of vertices $(f_0(\sigma),\ldots,f_p(\sigma)) \in W_n(A,X)_0^{p+1}$. Since $U\bar{\cM}$ is homogeneous (Proposition~\ref{prop:pre-braided-homogeneous}), it will thus suffice to show that:
\begin{enumerate}
\item\label{prop-injective} the map \eqref{eq:locally-standard-map} is injective;
\item\label{prop-distinct} the two canonical morphisms $X \to A \oplus X \oplus X$ are distinct;
\item\label{prop-permutation} for $(v_0,\ldots,v_p) \in W_n(A,X)_0^{p+1}$, there is at most one permutation $\tau$ of $\{0,1,\ldots,p\}$ such that $(v_{\tau(0)},\ldots,v_{\tau(p)})$ is the ordered list of the vertices of a $p$-simplex in $W_n(A,X)_p$.
\end{enumerate}
We will begin by proving property \eqref{prop-injective}. Unwinding the definitions, we first observe that the set $\Hom_{U\bar{\cM}}(X,A \oplus X^{\oplus n})$ is in natural bijection with the quotient set $\Map(A \natural X^{\natural n}) / \Map(A \natural X^{\natural (n-1)})$, where we write $\Map(-) = \pi_0(\Homeo_I(-)) = \Aut_{\cM}(-)$ and where the map \eqref{eq:locally-standard-map} is induced by taking quotients vertically in the square of group homomorphisms
\begin{equation}
\label{eq:square-of-inclusions}
\begin{tikzcd}
\Map(A \natural X^{\natural (n-2)}) \ar[rr,hook] \ar[d,hook] && \Map(A \natural X^{\natural (n-1)}) \ar[d,hook] \\
\Map(A \natural X^{\natural (n-1)}) \ar[r,hook] & \Map(A \natural X^{\natural n}) \ar[r,"b"] & \Map(A \natural X^{\natural n}),
\end{tikzcd}
\end{equation}
where the map marked ``$b$'' is the braiding on the last two copies of $X$ in $A \natural X^{\natural n}$ and the other maps (induced by extending homeomorphisms by the identity) are injective by Lemma \ref{lem:injective-map-of-Aut}. The map \eqref{eq:locally-standard-map} is injective if and only if the square \eqref{eq:square-of-inclusions} is a pullback square, which is equivalent to the following claim, where we write $A \natural X^{\natural n} = \bar{A} \natural X_1 \natural X_2$ with $\bar{A} = A \natural X^{\natural (n-2)}$ and $X_1 = X_2 = X$.
\begin{itemizeb}
\item[\textit{Claim.}] Suppose that an element $g \in \Map(A \natural X^{\natural n})$ admits representatives $g = [\varphi_1] = [\varphi_2]$, where $\varphi_1$ restricts to the identity on $X_1$ and $\varphi_2$ restricts to the identity on $X_2$. Then $g$ admits a representative $[\varphi_{12}]$, where $\varphi_{12}$ restricts to the identity on $X_1 \natural X_2$.
\end{itemizeb}
To prove this claim, first notice that the subsurface $X_1$ of $\bar{A} \natural X_1 \natural X_2$ disconnects the surface into the two pieces $\bar{A}$ and $X_2$. Because of this, since $\varphi_1$ restricts to the identity on $X_1$, it must be equal to $\varphi'_1 \natural \mathrm{id}_{X_1} \natural \psi$ for some homeomorphisms $\varphi'_1$ and $\psi$ of $\bar{A}$ and $X_2$ respectively. It will suffice to show that the homeomorphism $\varphi''_1 = \mathrm{id}_{X_1} \natural \psi$ of $X_1 \natural X_2$ is isotopic to the identity (fixing the boundary of $X_1 \natural X_2$). Now let $\cA$ be a stable Alexander system of essential arcs and curves on $X_1 \natural X_2$ having the property that each arc or curve in $\cA$ is contained either in $X_1$ or in $X_2$. (For infinite-type surfaces this may be ensured by following the construction of stable Alexander systems in \cite{HMV19}: they are built from the boundaries of a stable exhaustion, a ``dual'' curve for each boundary of the exhaustion and a finite stable Alexander system for each stratum of the exhaustion. As long as we ensure that the stable exhaustion respects the decomposition of $X_1 \natural X_2$ into $X_1$ and $X_2$, we may arrange that the stable Alexander system does too.) Let $C$ be an arc or curve in $\cA$. If $C$ lies in $X_1$, then $\varphi''_1(C) = C$. If $C$ lies in $X_2$, then $\varphi''_1(C) = \varphi_1(C)$ and $\varphi_2(C) = C$ are arcs or curves in $X_2$ that are isotopic in $A \natural X^{\natural n}$. Via a retraction $A \natural X^{\natural n} \to X_2$, they are homotopic in $X_2$ and thus isotopic in $X_2$ by \cite[Theorem~2.1~or~3.1]{Epstein1966}. Hence, by the Alexander trick, the homeomorphism $\varphi''_1$ of $X_1 \natural X_2$ is isotopic to the identity. This completes the proof of the Claim and therefore also of property \eqref{prop-injective}.

We next prove property \eqref{prop-distinct}. As noted above, morphisms $X \to A \oplus X \oplus X$ in $U\bar{\cM}$ correspond to elements of $\Map(A \natural X_1 \natural X_2) / \Map(A \natural X_1)$, where we write $X_1 = X_2 = X$ as a notational device to distinguish the two copies of $X$ in the boundary connected sum. The two canonical morphisms correspond, under this identification, to the cosets $[\mathrm{id}]$ and $[\mathrm{id}_A \natural b]$, where $b$ denotes the braiding of $X_1 \natural X_2$. Property \eqref{prop-distinct} is then equivalent to the claim that $\mathrm{id}_A \natural b$ is not isotopic to a homeomorphism of the form $\varphi \natural \mathrm{id}_{X_2}$. To see this, consider the induced actions on $H_1(A \natural X_1 \natural X_2)$. Any homeomorphism isotopic to one of the form $\varphi \natural \mathrm{id}_{X_2}$ must act trivially on all homology classes supported on $X_2$. On the other hand, since $X_2 = X$ is not the disc, it supports non-trivial homology classes, which are acted on non-trivially by the braiding $\mathrm{id}_A \natural b$. This establishes property \eqref{prop-distinct}.

Property \eqref{prop-permutation} is most easily seen using the alternative description of the simplices of $W_n(A,X)_\bullet$ that we explain in the next subsection, as ambient isotopy classes of collections of pairwise disjointly embedded copies of $X$ in the interior of $A \natural X^{\natural n}$ equipped with arcs to the basepoint of $A \natural X^{\natural n}$, where the arcs are also pairwise disjoint except at the basepoint. In this description, the ordering of the list of vertices of a simplex corresponds to the intrinsic ordering of the arcs in a small neighbourhood of the basepoint. Changing the ordering of the vertices without changing the unordered set of vertices would therefore entail isotoping these arcs (fixing their endpoints) so that at the end of the isotopy, we have: (i) they are again pairwise disjoint and disjoint from the embedded copies of $X$; (ii) their ordering near the basepoint is different. This is impossible whenever $X$ has non-trivial fundamental group, which is the case since we have assumed that $X$ is not the disc.
\end{proof}

\subsection{Simplifying the simplicial complexes}\label{s:simplifying}

We now discuss in more detail the morphisms $X^{\oplus i} \to A \oplus X^{\oplus n}$ in $U\bar{\cM}$ in order to simplify the simplicial complex $S_n(A,X)$. We first give an explicit topological description of this set of morphisms (Proposition \ref{prop:hom-set-admissible-embeddings}) in terms of \emph{admissible embeddings} (Definition \ref{defn-admissible-embeddings}). This will allow us to replace $S_n(A,X)$ with another simplicial complex where we remember only boundaries of admissible embeddings.

\begin{construction}
Let $Y$ and $Z$ be two decorated surfaces. Let us consider a smooth embedding
\begin{equation}
\label{eq:admissible-embedding}
f \colon (Y,[0,1]) \lhook\joinrel\longrightarrow (Z,[0,1]),
\end{equation}
where the notation $(-,[0,1])$ means that
\begin{itemizeb}
\item $f$ acts by the identity on $[0,1] \subset \partial Y$;
\item $f$ sends $[-1,0) \subset Y$ into the interior of $Z$.
\end{itemizeb}
Let us consider the subsurface
\begin{equation}
\label{eq:complement}
f^\perp := \overline{Z \smallsetminus f(Y)},
\end{equation}
the closure in $Z$ of the complement of $f(Y)$. This has the structure of a decorated surface, with decoration $I \colon [-1,1] \to \partial (f^\perp)$ given by the decoration of $Z$ on $[-1,0]$ and by the composition $f \circ (-1) \colon [0,1] \to [-1,0] \to Y \to Z$ on $[0,1]$, where the middle arrow is the negative part of the decoration of $Y$.
\end{construction}

\begin{defn}
\label{defn-admissible-embeddings}
An embedding $f$ of the form \eqref{eq:admissible-embedding} is called \emph{pre-admissible} if:
\begin{itemizeb}
\item it is a proper embedding, i.e., its image is a closed subset of $Z$,
\item the intersection $f(Y) \cap f^\perp$ is equal to $f([-1,0])$.
\end{itemizeb}
These conditions imply that $Z = f^\perp \natural f(Y)$. When $Y = X^{\natural i}$ and $Z = A \natural X^{\natural n}$, a pre-admissible embedding $f$ is called \emph{admissible} if, in addition:
\begin{itemizeb}
\item the subsurface $f^\perp$ is diffeomorphic -- as a decorated surface -- to $A\natural X^{\natural (n-i)}$.
\end{itemizeb}
The space of admissible embeddings of $X^{\natural i}$ into $A \natural X^{\natural n}$ is denoted by
\[
\Emba_{[0,1]}(X^{\natural i} , A \natural X^{\natural n} ) \subset \Emb(X^{\natural i} , A \natural X^{\natural n})
\]
and is equipped with the topology induced by the smooth Whitney topology on $\Emb(X^{\natural i} , A \natural X^{\natural n})$. Unwinding this definition, one sees that an embedding $X^{\natural i} \hookrightarrow A \natural X^{\natural n}$ is admissible if and only if it extends (along the canonical inclusion) to a diffeomorphism of $A \natural X^{\natural n}$; this is the characterisation that will be needed in the proof of Proposition \ref{prop:hom-set-admissible-embeddings} below.
\end{defn}

We will also need the following elementary lemma, whose proof is immediate.

\begin{lem}
\label{lem:path-lifting-GH}
Let $H \subset G$ be an inclusion of topological groups and suppose that the quotient map $G \twoheadrightarrow G/H$ admits path lifting. Then there is a natural bijection
\[
\pi_0(G) / \pi_0(H) \cong \pi_0(G/H) .
\]
\end{lem}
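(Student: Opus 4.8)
The plan is to build the bijection directly from the quotient map $q \colon G \twoheadrightarrow G/H$, being slightly careful about conventions. Since $G$ is a topological group, $\pi_0(G)$ is a group under the multiplication inherited from $G$; the inclusion $H \hookrightarrow G$ induces a homomorphism $\pi_0(H) \to \pi_0(G)$, which need not be injective, and by $\pi_0(G)/\pi_0(H)$ I mean the quotient set of $\pi_0(G)$ by the subgroup $N := \mathrm{im}(\pi_0(H)\to\pi_0(G))$ (this is merely a set, since $H$ need not be normal, which matches the fact that $\pi_0(G/H)$ is only a set).

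First I would check that $q$ induces a well-defined map $\bar q \colon \pi_0(G)/\pi_0(H) \to \pi_0(G/H)$. The map $\pi_0(q) \colon \pi_0(G)\to\pi_0(G/H)$ satisfies $\pi_0(q)([gh]) = \pi_0(q)([g])$ for every $h\in H$, simply because $q(gh)=q(g)$ as points of $G/H$; since the coset $[g]N$ consists exactly of the classes $[gh]$ for $h \in H$, the map $\pi_0(q)$ is constant on it and hence descends to $\bar q$. Surjectivity of $\bar q$ is then immediate and does not use the hypothesis: $q$ is surjective, so every path component of $G/H$ contains a point of the form $q(g)$, which is the image of $[g]N$ under $\bar q$.

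The only real content is injectivity, and this is exactly where path lifting enters. Suppose $\bar q([g_0]N) = \bar q([g_1]N)$, i.e.\ $q(g_0)$ and $q(g_1)$ lie in the same path component of $G/H$; pick a path $\gamma$ in $G/H$ from $q(g_0)$ to $q(g_1)$. By the path-lifting hypothesis there is a path $\tilde\gamma$ in $G$ with $\tilde\gamma(0)=g_0$ and $q\circ\tilde\gamma=\gamma$. Then $q(\tilde\gamma(1))=q(g_1)$, so $\tilde\gamma(1)=g_1 h$ for some $h\in H$, and the path $\tilde\gamma$ exhibits $[g_0]=[g_1 h]=[g_1][h]$ in $\pi_0(G)$, whence $[g_0]N=[g_1]N$. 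Finally, for naturality, a continuous homomorphism $f\colon G\to G'$ with $f(H)\subseteq H'$ induces a commuting square relating the two quotient maps, and applying $\pi_0$ and passing to $N$-quotients shows $\bar q$ is natural. I expect no genuine obstacle here — the paper rightly calls the proof immediate — beyond keeping the possibly non-injective, possibly non-normal subgroup $N$ straight, and observing that well-definedness and surjectivity are formal while injectivity is precisely the payload of the path-lifting assumption.
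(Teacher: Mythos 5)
Your proof is correct and is exactly the direct argument the paper has in mind when it declares the proof "immediate" (the paper gives no written proof). You are also right to flag the minor bookkeeping point that $\pi_0(G)/\pi_0(H)$ should be read as the coset space of $\pi_0(G)$ by the image of $\pi_0(H)\to\pi_0(G)$, and that only injectivity uses path lifting.
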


\begin{prop}
\label{prop:hom-set-admissible-embeddings}
For decorated surfaces $A \neq X$ there is a natural bijection
\[
\Hom_{\cU\bar{\cM}} (X^{\oplus i},A\oplus X^{\oplus n}) \cong \begin{cases}
\pi_0 \bigl( \Emba_{[0,1]}(X^{\natural i} , A \natural X^{\natural n} ) \bigr) & \text{for } i\leq n \\
\varnothing & \text{for } i>n.
\end{cases}
\]
\end{prop}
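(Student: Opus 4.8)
The plan is to identify both sides of the claimed bijection with one and the same quotient of mapping class groups, namely $\Map(A\natural X^{\natural n})/\Map(A\natural X^{\natural(n-i)})$ for $i\le n$ (and the empty set for $i>n$), where the subgroup is included via the standard stabilisation map, and then to compose the two identifications. Step~1 is a formal unwinding of the Quillen bracket construction: a morphism $X^{\oplus i}\to A\oplus X^{\oplus n}$ in $U\bar{\cM}$ is an equivalence class $[W,f]$ with $W$ an object of $\bar{\cM}$ and $f$ an \emph{isomorphism} $W\oplus X^{\oplus i}\to A\oplus X^{\oplus n}$ in the groupoid $\bar{\cM}$. Since the object monoid of $\bar{\cM}$ is the \emph{free} monoid on the decorated surfaces and $\bar{\cM}$ has no morphisms between distinct words, the existence of $f$ forces the equality of words $W\oplus X^{\oplus i}=A\oplus X^{\oplus n}$. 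Because $A\neq X$, this equation has a solution if and only if $i\le n$, in which case $W=A\oplus X^{\oplus(n-i)}$ is the unique solution; this disposes of the case $i>n$. For $i\le n$ we then have $f\in\Aut_{\bar{\cM}}(A\oplus X^{\oplus n})=\Aut_{\cM}(A\natural X^{\natural n})=\Map(A\natural X^{\natural n})$, and two representatives $(W,f),(W,f')$ are equivalent precisely when they differ by precomposition with $g\oplus\id_{X^{\oplus i}}$ for some $g\in\Aut_{\bar{\cM}}(W)=\Map(A\natural X^{\natural(n-i)})$. Since $g\mapsto g\oplus\id_{X^{\oplus i}}$ is the standard stabilisation inclusion (extend $g$ by the identity on the last $i$ copies of $X$), which is injective by Lemma~\ref{lem:injective-map-of-Aut}, this yields a bijection
\[
\Hom_{U\bar{\cM}}(X^{\oplus i},A\oplus X^{\oplus n})\;\cong\;\Map(A\natural X^{\natural n})/\Map(A\natural X^{\natural(n-i)})\qquad(i\le n).
\]

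For Step~2 I would recognise this coset space as $\pi_0$ of an embedding space. Set $Z=A\natural X^{\natural n}$ and let $\iota\colon X^{\natural i}\hookrightarrow Z$ be the canonical inclusion onto the last $i$ summands; it is admissible with $\iota^{\perp}\cong A\natural X^{\natural(n-i)}$. The group $\Diff_{[0,1]}(Z)$ acts on $\Emba_{[0,1]}(X^{\natural i},Z)$ by postcomposition, $\psi\mapsto\psi\circ\iota$, and this action is \emph{transitive}: this is exactly the characterisation in Definition~\ref{defn-admissible-embeddings} that an admissible embedding extends along the canonical inclusion to a diffeomorphism of $Z$ --- and since a decorated embedding already fixes $[0,1]$, the extension can be taken to lie in $\Diff_{[0,1]}(Z)$. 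By the isotopy extension theorem (in the strong/Whitney topology) the orbit map is a fibre bundle; in particular it admits path lifting, and its fibre is $\mathrm{Stab}(\iota)=\{\psi\in\Diff_{[0,1]}(Z):\psi|_{\iota(X^{\natural i})}=\id\}$. Lemma~\ref{lem:path-lifting-GH} then gives
\[
\pi_0(\Emba_{[0,1]}(X^{\natural i},Z))\;\cong\;\pi_0(\Diff_{[0,1]}(Z))/\pi_0(\mathrm{Stab}(\iota)).
\]
By Lemma~\ref{diff-to-homeo} together with the fact that $Z$ has a single boundary component, $\pi_0(\Diff_{[0,1]}(Z))=\Map(Z)$; and restriction to $\iota^{\perp}$ identifies $\pi_0(\mathrm{Stab}(\iota))$ with the image of the standard stabilisation map $\Map(\iota^{\perp})\cong\Map(A\natural X^{\natural(n-i)})\to\Map(Z)$, its injectivity again being Lemma~\ref{lem:injective-map-of-Aut} applied to the splitting $Z=\iota^{\perp}\natural\iota(X^{\natural i})$. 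Composing with Step~1 yields the asserted bijection, and one checks directly that all of the identifications are natural --- in particular compatible with the face maps of $W_n(A,X)_\bullet$ --- as needed below.

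The main obstacle lies entirely in Step~2, and specifically in two points. First, one must invoke an isotopy extension theorem valid for proper smooth embeddings of (possibly infinite-type) surfaces equipped with the strong topology, so that the orbit map is genuinely a fibration and Lemma~\ref{lem:path-lifting-GH} applies; this is standard but must be handled with care in the non-compact setting. Second, one must identify the fibre $\mathrm{Stab}(\iota)$, at the level of $\pi_0$, with $\Map(A\natural X^{\natural(n-i)})$ \emph{via the standard inclusion}: here one uses that a homeomorphism fixing an admissibly embedded subsurface pointwise splits as a connected sum of homeomorphisms of the two complementary pieces, and that the resulting map on mapping class groups is injective. That injectivity is precisely Lemma~\ref{lem:injective-map-of-Aut}, and it is exactly what guarantees that the quotients appearing in Steps~1 and~2 agree on the nose. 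Step~1, by contrast, requires nothing beyond the definitions of $\bar{\cM}$ and of the bracket construction.
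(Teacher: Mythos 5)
Your proposal is correct and follows essentially the same two-step route as the paper: first unwinding the bracket construction to obtain $\Hom_{U\bar{\cM}}(X^{\oplus i},A\oplus X^{\oplus n})\cong \pi_0(\Homeo_I(\bar A\natural X^{\natural i}))/\pi_0(\Homeo_I(\bar A))$ (and $\varnothing$ for $i>n$), then passing to $\Diff$ via Lemma~\ref{diff-to-homeo} and identifying the coset space with $\pi_0(\Emba)$ via the orbit/restriction fibre bundle and Lemma~\ref{lem:path-lifting-GH}. The only difference is cosmetic --- you phrase the second step by identifying the fibre $\mathrm{Stab}(\iota)$ directly, whereas the paper factors the restriction map through the topological quotient $\Diff_I(\bar A\natural X^{\natural i})/\Diff_I(\bar A)$ and checks that the induced continuous bijection onto $\Emba$ is a homeomorphism before taking $\pi_0$.
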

\begin{proof}
By definition, $\Hom_{\cU\bar{\cM}} (X^{\oplus i},A\oplus X^{\oplus n})$ is the set of pairs $(Y,f)$ where $f \colon Y\oplus X^{\oplus i} \to A\oplus X^{\oplus n}$ is a morphism in $\Hom_{\bar{\cM}}(Y\oplus X^{\oplus i}, A\oplus X^{\oplus n})$ up to a certain equivalence relation. By definition, $\bar{\cM}$ has no morphisms between distinct objects, so this forces $Y \oplus X^{\oplus i} = A \oplus X^{\oplus n}$. Moreover, objects of $\bar{\cM}$ are words of decorated surfaces and $\oplus$ is concatenation of words, so by cancellation this implies:
\begin{itemizeb}
\item that $Y = A \oplus X^{\oplus (n-i)}$, if $i\leq n$;
\item a contradiction, if $i>n$.
\end{itemizeb}
Hence there are no morphisms if $i>n$ and we may assume from now on that $i\leq n$. Thus we have no choice about $Y$ and $f$ is an automorphism of $A \oplus X^{\oplus n}$ in $\bar{\cM}$, which is an automorphism of $A \natural X^{\natural n}$ in $\cM$, which is an isotopy class of self-homeomorphisms of $A \natural X^{\natural n}$ fixing $I$ pointwise. Two such pairs $(A\oplus X^{\oplus (n-i)},f_1)$ and $(A\oplus X^{\oplus (n-i)},f_2)$ are equivalent if and only if there exists a self-homeomorphism $g$ of $A\natural X^{\natural (n-i)}$ fixing $I$ pointwise making the following diagram commute:
 \[
\begin{tikzcd}
A\natural X^{\natural(n-i)} \natural X \ar[rr,"f"] \ar[d,"{g\natural \id_X}",swap]  && A\natural X^{\natural n}    \\
 A\natural X^{\natural (n-i)}\natural X \ar[urr,"{f'}",swap]  &&
\end{tikzcd}
\]
Thus we have a bijection
\begin{equation}
\label{eq:bijection-1}
\Hom_{\cU\bar{\cM}} (X^{\oplus i},A\oplus X^{\oplus n}) \longleftrightarrow \pi_0(\Homeo_I(\bar{A} \natural X^{\natural i})) / \pi_0(\Homeo_I(\bar{A})),
\end{equation}
where we abbreviate $\bar{A} = A \natural X^{\natural n-i}$. From the commutative diagram
\[
\begin{tikzcd}
\pi_0(\Diff_I(\bar{A})) \ar[d] \ar[r] & \pi_0(\Diff_I(\bar{A} \natural X^{\natural i})) \ar[d] \\
\pi_0(\Homeo_I(\bar{A})) \ar[r] & \pi_0(\Homeo_I(\bar{A} \natural X^{\natural i}))
\end{tikzcd}
\]
and Lemma \ref{diff-to-homeo}, which tells us that the vertical maps are bijections, we have a natural bijection
\begin{equation}
\label{eq:bijection-2}
\pi_0(\Homeo_I(\bar{A} \natural X^{\natural i})) / \pi_0(\Homeo_I(\bar{A})) \longleftrightarrow \pi_0(\Diff_I(\bar{A} \natural X^{\natural i})) / \pi_0(\Diff_I(\bar{A})) .
\end{equation}
Next, we use the fact that the restriction map
\begin{equation}
\label{eq:restriction-map}
\Diff_I(\bar{A} \natural X^{\natural i}) \longrightarrow \Emb_{[0,1]}(X^{\natural i},\bar{A} \natural X^{\natural i}),
\end{equation}
given by pre-composition with the inclusion $X^{\natural i} \hookrightarrow \bar{A} \natural X^{\natural i}$, is a fibre bundle. (This is the Isotopy Extension Theorem.) It therefore remains a fibre bundle when we replace its codomain with its image, which is, by definition, the subspace $\Emba_{[0,1]}(X^{\natural i},\bar{A} \natural X^{\natural i})$ of \emph{admissible} embeddings. The (non-empty) fibres of \eqref{eq:restriction-map} are precisely the orbits of the action of $\Diff_I(\bar{A})$, so the restriction map factors as
\begin{equation}
\label{eq:restriction-map-triangle}
\begin{tikzcd}
& \Diff_I(\bar{A} \natural X^{\natural i}) \ar[dl,two heads] \ar[dr,two heads] & \\
\Diff_I(\bar{A} \natural X^{\natural i}) / \Diff_I(\bar{A}) \ar[rr] && \Emba_{[0,1]}(X^{\natural i} , A \natural X^{\natural n} )
\end{tikzcd}
\end{equation}
where the bottom horizontal map is a continuous bijection. The left diagonal map in \eqref{eq:restriction-map-triangle} is a quotient map (by definition) and the right diagonal map in \eqref{eq:restriction-map-triangle} is also a quotient map (because it is a fibre bundle); thus the bottom horizontal map is in fact a homeomorphism. In particular, it induces a bijection
\begin{equation}
\label{eq:bijection-3}
\pi_0 \bigl( \Diff_I(\bar{A} \natural X^{\natural i}) / \Diff_I(\bar{A}) \bigr) \longleftrightarrow \pi_0 \bigl( \Emba_{[0,1]}(X^{\natural i} , A \natural X^{\natural n} ) \bigr) .
\end{equation}
Finally, since the right diagonal map in \eqref{eq:restriction-map-triangle} is a fibre bundle and the bottom horizontal map in \eqref{eq:restriction-map-triangle} is a homeomorphism, the left diagonal map in \eqref{eq:restriction-map-triangle} is also a fibre bundle; in particular it admits path lifting. Thus Lemma \ref{lem:path-lifting-GH} implies that there is a natural bijection
\begin{equation}
\label{eq:bijection-4}
\pi_0(\Diff_I(\bar{A} \natural X^{\natural i})) / \pi_0(\Diff_I(\bar{A})) \longleftrightarrow \pi_0 \bigl( \Diff_I(\bar{A} \natural X^{\natural i}) / \Diff_I(\bar{A}) \bigr) .
\end{equation}
The claim now follows by composing the bijections \eqref{eq:bijection-1}, \eqref{eq:bijection-2}, \eqref{eq:bijection-4} and \eqref{eq:bijection-3}.
\end{proof}

\begin{rem}
This identification of morphisms in a Quillen bracket category of manifolds as embeddings satisfying a certain admissibility condition is similar to \cite[Proposition~4.8]{PalmerSoulie2019}, which considers topologically-enriched Quillen bracket categories of manifolds.
\end{rem}

By Proposition \ref{prop:hom-set-admissible-embeddings} and the definition of $W_n(A,X)_\bullet$, we deduce:

\begin{cor}
\label{cor:hom-set-admissible-embeddings}
For each $p\geq 0$, there is a natural bijection
\[
W_n(A,X)_p \cong \begin{cases}
\pi_0 \bigl( \Emba_{[0,1]}(X^{\natural (p+1)} , A \natural X^{\natural n} ) \bigr) & \text{for } p\leq n-1 \\
\varnothing & \text{for } p\geq n.
\end{cases}
\]
Moreover, under these identifications, the face maps of $W_n(A,X)_\bullet$ correspond to pre-composition with embeddings $X^{\natural (i-1)} \hookrightarrow X^{\natural i}$ given by the standard inclusion composed with appropriate braidings of $X^{\natural i}$.
\end{cor}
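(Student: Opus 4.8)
The plan is to read off both statements from Proposition \ref{prop:hom-set-admissible-embeddings} and from the explicit form of the bijection constructed in its proof. For the first statement, recall that $W_n(A,X)_p = \Hom_{\cU\bar{\cM}}(X^{\oplus p+1}, A\oplus X^{\oplus n})$ by Definition \ref{defn-W_n(A,X)}, so Proposition \ref{prop:hom-set-admissible-embeddings} applied with $i = p+1$ immediately yields the bijection with $\pi_0(\Emba_{[0,1]}(X^{\natural(p+1)}, A\natural X^{\natural n}))$ when $p+1\leq n$, i.e.\ $p\leq n-1$, and yields $\varnothing$ when $p+1 > n$, i.e.\ $p\geq n$.

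For the face maps, I would first record precisely what the chain of bijections \eqref{eq:bijection-1}--\eqref{eq:bijection-4} does to an element: a morphism of $\cU\bar{\cM}$ of the form $[\,A\oplus X^{\oplus(n-i)},\,\varphi\,]$, where $\varphi\in\Homeo_I(A\natural X^{\natural n})$, is carried to the ambient isotopy class of the admissible embedding $\varphi\circ j_i$, where $j_i\colon X^{\natural i}\hookrightarrow A\natural X^{\natural n}$ denotes the standard inclusion of the last $i$ boundary-connected-sum factors (this is exactly the effect on the representative $\varphi$ of the restriction map \eqref{eq:restriction-map}). Now, by Definition \ref{defn-W_n(A,X)}, the $i$th face map $d_i$ of $W_n(A,X)_\bullet$ is pre-composition with $\alpha_i := \id_{X^{\oplus i}}\oplus\imath_X\oplus\id_{X^{\oplus p-i}}\colon X^{\oplus p}\to X^{\oplus p+1}$. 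Unwinding the monoidal structure of the Quillen bracket category $\cU\bar{\cM}$ (which, following \cite[\S 1]{RWW17}, incorporates the groupoid braiding of $\bar{\cM}$, namely a half Dehn twist near the based boundaries), one sees that $\alpha_i$ is represented by a pair $[\,X,\beta_i\,]$, where $\beta_i$ is the self-homeomorphism of $X^{\natural p+1}$ braiding the newly inserted copy of $X$ into the $(i+1)$st factor, a composite of half-twists.

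Combining these two observations with the composition rule $[\,W,g\,]\circ[\,Z,h\,] = [\,W\oplus Z,\, g\circ(\id_W\oplus h)\,]$ in $\cU\bar{\cM}$, a short computation shows that if a $p$-simplex of $W_n(A,X)_\bullet$ corresponds to an admissible embedding $e\colon X^{\natural(p+1)}\hookrightarrow A\natural X^{\natural n}$, then $d_i$ of that simplex corresponds to $e$ pre-composed with the composite of the standard inclusion $X^{\natural p}\hookrightarrow X^{\natural p+1}$ with $\beta_i$ — that is, to pre-composition with an embedding $X^{\natural p}\hookrightarrow X^{\natural p+1}$ that is the standard inclusion followed by appropriate braidings, as asserted. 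I do not expect a genuine obstacle here, since the argument is pure bookkeeping; the one point requiring care is to keep track of the braiding isomorphisms consistently — those built into the monoidal structure of $\cU\bar{\cM}$ as well as those appearing when one matches up the standard inclusions of $X^{\natural p}$ and $X^{\natural p+1}$ inside $A\natural X^{\natural n}$ — and to fix conventions for which copy of $X$ is ``new'' and in which sense the half-twists act. None of this affects the form of the statement.
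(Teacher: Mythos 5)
Your proof is correct and takes essentially the same approach as the paper, which simply asserts the corollary as an immediate consequence of Proposition \ref{prop:hom-set-admissible-embeddings} and Definition \ref{defn-W_n(A,X)}. You supply the bookkeeping the paper leaves implicit: identifying the bijection of Proposition \ref{prop:hom-set-admissible-embeddings} concretely as $[\bar{A},\varphi]\mapsto[\varphi\circ j_i]$ (restriction of a representative mapping class to the standard inclusion of the last $i$ factors), unwinding $\id_{X^{\oplus i}}\oplus\imath_X\oplus\id_{X^{\oplus p-i}}$ as a pair $[X,\beta_i]$ with $\beta_i$ a composite of half-twists built into the pre-braided monoidal structure, and tracing through the composition rule of the Quillen bracket category — all of which is exactly what the paper's one-line justification amounts to.
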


This also gives a topological description of the vertices and simplices of the associated simplicial complex $S_n(A,X)$ in terms of admissible embeddings. We now use this description to observe that $S_n(A,X)$ is a complete join over the simplicial complex where we forget most information about the admissible embeddings and only remember their restrictions to the based boundary. This will allow us to progressively simplify $S_n(A,X)$ to a more manageable simplicial complex $TC_n(A,X)$ of ``tethered $X$-curves'', whose high-connectivity will imply high-connectivity of $S_n(A,X)$; see Theorem \ref{thm:reduction-to-TC}. We do this simplification in the remainder of this section, and then in \S\ref{s:high-connectivity} we prove high-connectivity of $TC_n(A,X)$ in each of the three settings listed in the statement of Theorem \ref{thm:hom-stab}.

\begin{defn}
A based, embedded loop
\[
\alpha \colon (S^1,\{*\}) \longrightarrow (A\natural X^{\natural n},\{0\})
\]
is called \emph{$X$-admissible} if there exists an admissible embedding $f \colon (X,[0,1]) \hookrightarrow (A\natural X^{\natural n},[0,1])$ (see Definition \ref{defn-admissible-embeddings}) such that $f|_{\partial_0 X} = \alpha$. Similarly, a loop $\alpha$ is called \emph{$X^{\natural i}$-admissible} if there exists an admissible embedding $f \colon (X^{\natural i},[0,1]) \hookrightarrow (A\natural X^{\natural n},[0,1])$ such that $f|_{\partial_0 X^{\natural i}} = \alpha$.
\end{defn}

\begin{defn}
\label{defn:complex-Un(A,X)}
Let $U_n(A,X)$ be the following simplicial complex:
\begin{enumerate}
\item a vertex of $U_n(A,X)$ is an isotopy class of $X$-admissible loops;
\item a $(p+1)$-tuple $[\alpha_0], \ldots, [\alpha_p]$ of vertices spans a $p$-simplex of $U_n(A,X)$ if there exist representatives $\alpha_0,\ldots,\alpha_p$ of the isotopy classes that are pairwise disjoint except at the basepoint $0 \in A\natural X^{\natural n}$ and moreover their concatenation $\alpha_0 \cdots \alpha_p$ is $X^{\natural (p+1)}$-admissible up to isotopy.
\end{enumerate}
\end{defn}

By Corollary \ref{cor:hom-set-admissible-embeddings}, the vertices of $S_n(A,X)$, which are by definition also the vertices of $W_n(A,X)_\bullet$, are isotopy classes of admissible embeddings $f \colon (X,[0,1]) \hookrightarrow (A\natural X^{\natural n},[0,1])$. There is therefore a map of simplicial complexes
\begin{equation}
\label{eq:forgetful-map-from-S-to-U}
\cF \colon S_n(A,X) \longrightarrow U_n(A,X)
\end{equation}
given by sending $[f]$ to $[f|_{\partial_0 X}]$. Let us verify that it does indeed send simplices to simplices. A simplex of $S_n(A,X)$ is a collection of admissible embeddings of $X$ that arises from an admissible embedding of $X^{\natural (p+1)}$ by restricting to the last copy of $X$ in the boundary connected sum $X^{\natural (p+1)}$ and then applying the braiding of $\cM$ ($i$ times, for each $0\leq i\leq p$). The based boundaries of these admissible embeddings are not disjoint on the nose, but they may easily be made pairwise disjoint (except at the basepoint) by shrinking them slightly in a collar neighbourhood of the based boundary of $X$. We record the following key observation.

\begin{obs}
\label{obs-cjoin}
When $X \neq \bD^2$, the forgetful map \eqref{eq:forgetful-map-from-S-to-U} is a complete join.
\end{obs}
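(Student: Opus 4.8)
The plan is to verify the two conditions in Definition \ref{defn:complete-join} directly for the forgetful map $\cF \colon S_n(A,X) \to U_n(A,X)$, using the topological description of simplices of $S_n(A,X)$ provided by Corollary \ref{cor:hom-set-admissible-embeddings}. So a vertex of $S_n(A,X)$ is an isotopy class of admissible embeddings $f \colon (X,[0,1]) \hookrightarrow (A \natural X^{\natural n},[0,1])$, a vertex of $U_n(A,X)$ is the isotopy class of the restriction $f|_{\partial_0 X}$ (an $X$-admissible loop), and $\cF$ records this restriction. The first condition, that $\cF$ is injective on individual simplices, amounts to showing that two admissible embeddings $f_0, f_1$ of a single copy of $X$ with isotopic based-boundary loops but \emph{non-isotopic} as embeddings cannot span an edge of $S_n(A,X)$; and more generally that within a single simplex of $S_n(A,X)$, distinct vertices have distinct images in $U_n(A,X)$.

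First I would establish condition \eqref{complete-join-injective}. The key point is that a vertex of $S_n(A,X)$ is determined by its based-boundary loop \emph{together with the isotopy class of a tether-free complement}, but once we fix that the other vertices of a simplex of $S_n(A,X)$ occupy a fixed isotopy class of complementary subsurface, the embedded copy of $X$ associated to a vertex is recovered up to isotopy from its based boundary: indeed, the copy of $X$ is the closure of a complementary component, and admissibility pins down the homeomorphism type, so the isotopy class of the embedding is determined by the isotopy class of its boundary circle (here one uses that $X \neq \bD^2$ so the boundary circle is essential and "sees" the copy of $X$). Hence $\cF$ is injective on each simplex. Then I would establish condition \eqref{complete-join-lifting}: given a $p$-simplex $\langle [\alpha_0],\ldots,[\alpha_p]\rangle$ of $U_n(A,X)$ and, for each $i$, a choice of vertex $[f_i] \in \cF^{-1}([\alpha_i])$ of $S_n(A,X)$, one must show that $\{[f_0],\ldots,[f_p]\}$ spans a $p$-simplex of $S_n(A,X)$. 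By definition of $U_n(A,X)$, the loops $\alpha_0,\ldots,\alpha_p$ can be made pairwise disjoint (except at the basepoint) and their concatenation is $X^{\natural(p+1)}$-admissible; using the isotopy extension theorem I would thicken each $\alpha_i$ to an embedded copy of $X$ realizing the isotopy class $[f_i]$, doing so inductively so that the embeddings remain pairwise disjoint and the union is an admissible embedding of $X^{\natural(p+1)}$ — at each stage the space of admissible embeddings of $X$ into the complementary subsurface with prescribed based boundary is connected (this is essentially the content that the relevant complement is homeomorphic to $A \natural X^{\natural(n-i)}$, by admissibility), so any two realizations are isotopic and we may choose one disjoint from the previously chosen ones.

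The main obstacle, and the step I would spend the most care on, is the inductive disjointing argument in condition \eqref{complete-join-lifting}: one must ensure that, having thickened $\alpha_0,\ldots,\alpha_{i-1}$ to an admissible $X^{\natural i}$, the next loop $\alpha_i$ (disjoint from the $\alpha_j$ as a \emph{loop}) can be thickened \emph{inside the complement of the already-chosen copies of $X$} to a copy of $X$ in the prescribed isotopy class. The subtlety is that $[f_i]$ is an isotopy class of embeddings in all of $A \natural X^{\natural n}$, and we need a representative living in a prescribed complementary subsurface; this is where admissibility is essential — it guarantees the complement is again of the form $A \natural X^{\natural(n-i)}$, so it contains an admissible copy of $X$ with the given based boundary, and any two such are isotopic within that complement. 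I would also need to observe that $X \neq \bD^2$ is used twice: once so that the based-boundary loop is essential and hence determines the copy of $X$ up to isotopy (giving injectivity), and once implicitly in invoking the Alexander-method-type uniqueness statements underlying Corollary \ref{cor:hom-set-admissible-embeddings}. I expect the verification of \eqref{complete-join-injective} to be comparatively short given the earlier work, while \eqref{complete-join-lifting} carries the real content.
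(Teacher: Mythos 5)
Your overall approach — verify the two conditions of Definition \ref{defn:complete-join} directly, using the admissible-embedding description from Corollary \ref{cor:hom-set-admissible-embeddings} — is the same as the paper's, but both parts of your argument rest on a claim that is actually false, and which would trivialize the statement if true.

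The claim is that the isotopy class of an admissible embedding $f \colon X \hookrightarrow A \natural X^{\natural n}$ is determined by the isotopy class of its based boundary loop $f|_{\partial_0 X}$. You use this explicitly in your argument for \eqref{complete-join-injective} (``the isotopy class of the embedding is determined by the isotopy class of its boundary circle'') and implicitly in \eqref{complete-join-lifting} (``the space of admissible embeddings of $X$ into the complementary subsurface with prescribed based boundary is connected''). But this claim is false: the boundary loop determines the \emph{image} $f(X)$ up to isotopy (it is the complementary piece), but not the \emph{parametrisation}. Precomposing $f$ by a mapping class of $X$ fixing the marked interval gives a different vertex of $S_n(A,X)$ with the same image and same boundary loop. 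If your claim were true, $\cF$ would be globally injective, hence an isomorphism, and there would be nothing to prove. The fibre $\cF^{-1}([\alpha])$ is precisely a torsor over (a quotient of) $\Map(X)$, and the entire content of the complete-join statement is that these parametrisations can be chosen \emph{freely and independently} over the vertices of a simplex.

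With that gap identified, here is what the paper does instead. For condition \eqref{complete-join-injective}, one needs a genuine argument that two distinct vertices of a single simplex cannot have isotopic boundary loops; the paper gives a short homology argument: in $\bar{A} \natural X_1 \natural X_2$ with $\bar{A} = A \natural X^{\natural(n-2)}$, the classes $[\partial_0 X_1], [\partial_0 X_2] \in H_1(\bar{A} \natural X_1 \natural X_2)$ are distinct because $X$ is not a disc. For condition \eqref{complete-join-lifting}, the paper's argument is shorter than yours and avoids the inductive disjointing: a collection of pairwise disjoint $X$-admissible loops representing a $p$-simplex of $U_n(A,X)$ cuts $A \natural X^{\natural n}$ into $p+1$ pieces, each homeomorphic to $X$, and lifting to a $p$-simplex of $S_n(A,X)$ consists in \emph{choosing} a homeomorphism with $X$ for each piece — which may be done independently for each piece. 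You have the division of labour backwards: condition \eqref{complete-join-lifting} is the easy part once stated this way, while \eqref{complete-join-injective} is where a nontrivial input (the homology computation using $X \neq \bD^2$) is required.
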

\begin{proof}
The map \eqref{eq:forgetful-map-from-S-to-U} is surjective by definition of the vertices of $U_n(A,X)$, so we must check conditions \eqref{complete-join-injective} and \eqref{complete-join-lifting} of Definition \ref{defn:complete-join}. Failure of condition \eqref{complete-join-injective} (injectivity on each simplex) would imply, writing $\bar{A} = A \natural X^{\natural (n-2)}$ and $X_1 = X_2 = X$, that the based boundaries $\partial_0 X_1$ and $\partial_0 X_2$ are isotopic in $\bar{A} \natural X_1 \natural X_2$. But since $X$ is not a disc, the homology classes $[\partial_0 X_1], [\partial_0 X_2] \in H_1(\bar{A}\natural X_1\natural X_2)$ must be distinct. Thus condition \eqref{complete-join-injective} holds. Condition \eqref{complete-join-lifting} is the statement that a simplex of $U_n(A,X)$ may be lifted to a simplex of $S_n(A,X)$ by simply choosing lifts of each of its vertices, without requiring any compatibility between the lifts of vertices. This holds in our case because a collection of pairwise disjoint $X$-admissible loops representing a $p$-simplex in $U_n(A,X)$ cuts out $p+1$ pieces from the surface $A \natural X^{\natural n}$, which are each homeomorphic to $X$, and lifting to a $p$-simplex of $S_n(A,X)$ consists in \emph{choosing} a homeomorphism with $X$ for each of these pieces; this may be done independently for each piece. Thus condition \eqref{complete-join-lifting} also holds, so \eqref{eq:forgetful-map-from-S-to-U} is a complete join.
\end{proof}

By Proposition \ref{prop-join-conn} it will suffice, for high-connectivity of $S_n(A,X)$, to show that the complex $U_n(A,X)$ is weakly Cohen-Macaulay (\emph{wCM}). We now give another, isomorphic description of the simplicial complex $U_n(A,X)$ in terms of \emph{tethered curves} (cf.~\cite{HatcherVogtmann2017}), which are embeddings of the ``lasso'' $L = [0,2]/(1 \sim 2)$ into the surface $A\natural X^{\natural n}$.

\begin{defn}
A tethered curve $\alpha \colon L \hookrightarrow A \natural X^{\natural n}$ is called a \emph{tethered $X$-curve} if:
\begin{enumerate}
    \item the arc of $\alpha$ starts from the basepoint, i.e.~$L(0) = 0 \in I \subset A \natural X^{\natural n}$;
    \item the curve $\alpha([1,2])$ bounds a subsurface of $A \natural X^{\natural n}$ that is homeomorphic to $X$;
    \item the surface obtained from $A \natural X^{\natural n}$ by cutting along the arc $\alpha([0,1])$ and removing the interior of the subsurface bounded by $\alpha([1,2])$ is homeomorphic to $A\natural X^{\natural (n-1)}$.
\end{enumerate} 
\end{defn}

\begin{defn}
\label{defn-teth-X-cur}
For a decorated surface $A\natural X^{\natural n}$, its \emph{tethered $X$-curve complex} $TC_n(A,X)$ is defined to be the following simplicial complex:
\begin{enumerate}
\item a vertex of $TC_n(A,X)$ is an isotopy class of tethered $X$-curves;
\item a $(p+1)$-tuple $[\alpha_0], \ldots, [\alpha_p]$ of vertices spans a $p$-simplex of $TC_n(A,X)$ if there exist representatives $\alpha_0,\ldots,\alpha_p$ of the isotopy classes that are pairwise disjoint except at the basepoint $0 \in A\natural X^{\natural n}$ and moreover the complementary surface obtained by cutting out the $p+1$ subsurfaces bounded by $\alpha_0,\ldots,\alpha_p$ is homeomorphic to $A\natural X^{\natural (n-p-1)}$.
\end{enumerate}
\end{defn}

Unwinding the definitions, we see that isotopy classes of $X$-admissible loops in $A\natural X^{\natural n}$ are in one-to-one correspondence with isotopy classes of tethered $X$-curves in $A\natural X^{\natural n}$ and this correspondence respects the property of being representable disjointly with complementary subsurface homeomorphic to $A\natural X^{\natural (n-p-1)}$. We therefore have:

\begin{lem}
\label{lem-TC-U-cplx-iso}
The simplicial complexes $TC_n(A, X)$ and $U_n(A,X)$ are isomorphic.
\end{lem}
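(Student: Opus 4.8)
The plan is to write down an explicit bijection between the vertex sets of $TC_n(A,X)$ and $U_n(A,X)$ and to check that it and its inverse take simplices to simplices; this makes precise the ``unwinding of definitions'' mentioned just above the statement.

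First I would set up the vertex bijection. Given a tethered $X$-curve $\alpha\colon L\hookrightarrow A\natural X^{\natural n}$, write $X_\alpha$ for the subsurface bounded by the curve component $\alpha([1,2])$ (homeomorphic to $X$ by definition) and let $N_\alpha$ be a regular neighbourhood of $X_\alpha\cup\alpha([0,1])$, that is, $X_\alpha$ with a thin band attached along the tether so as to reach the basepoint $0\in I$. Then $N_\alpha$ is homeomorphic to $X$, and its complement $\overline{(A\natural X^{\natural n})\smallsetminus N_\alpha}$ is, after an isotopy sliding the band back onto $X_\alpha$, the surface obtained by cutting along the tether $\alpha([0,1])$ and deleting $\mathrm{int}(X_\alpha)$, hence is homeomorphic to $A\natural X^{\natural(n-1)}$ by condition (3) in the definition of a tethered $X$-curve; so $N_\alpha$ is an admissible embedding of $X$ and its based boundary circle is an $X$-admissible loop $\beta_\alpha$. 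Conversely, given an $X$-admissible loop $\beta=f|_{\partial_0 X}$ for an admissible embedding $f\colon(X,[0,1])\hookrightarrow(A\natural X^{\natural n},[0,1])$, push the part of $\beta$ lying in the interior off the segment $I([0,1])\subset\partial_0$ across a collar: this expresses $\beta$ as a short arc $\delta$ from $0$ into the interior followed by an embedded circle $\gamma$ co-bounding an annulus with the rest of $\beta$, and $(\delta,\gamma)$ is a lasso whose loop $\gamma$ bounds a pushed-off copy of $f(X)$, i.e.\ a tethered $X$-curve $\alpha_\beta$. Uniqueness of regular neighbourhoods and of collars (valid for infinite-type surfaces as well) shows that $\alpha\mapsto\beta_\alpha$ and $\beta\mapsto\alpha_\beta$ descend to isotopy classes, are independent of the choices made, and are mutually inverse.

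Next I would compare the two simplex conditions, applying the vertex maps simplex-wise. If $\alpha_0,\dots,\alpha_p$ are pairwise disjoint (except at $0$) tethered $X$-curves whose curve-components cut out $p+1$ subsurfaces with complement $\cong A\natural X^{\natural(n-p-1)}$, then, choosing the neighbourhoods $N_{\alpha_i}$ small and disjoint, the loops $\beta_{\alpha_i}$ are pairwise disjoint except at $0$, the subsurface caught between the concatenation $\beta_{\alpha_0}\cdots\beta_{\alpha_p}$ and $\partial_0$ is $\bigcup_i N_{\alpha_i}\cong X^{\natural(p+1)}$ (the bands realising the boundary-connected-sum in the cyclic order of the tethers at $0$), and its complement agrees with the complement of $\bigcup_i X_{\alpha_i}$, hence is $\cong A\natural X^{\natural(n-p-1)}$; so $\beta_{\alpha_0}\cdots\beta_{\alpha_p}$ is $X^{\natural(p+1)}$-admissible up to isotopy and the $\beta_{\alpha_i}$ span a $p$-simplex of $U_n(A,X)$. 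Running the argument in reverse --- using that an $X^{\natural(p+1)}$-admissible concatenation of pairwise disjoint $X$-admissible loops cuts off a subsurface $\cong X^{\natural(p+1)}$ that splits along the loops into copies of $X$ with complement $A\natural X^{\natural(n-p-1)}$ --- shows $\beta\mapsto\alpha_\beta$ preserves simplices as well, completing the proof.

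I expect the one genuinely delicate point to be the translation in the last paragraph between the single condition ``$\alpha_0\cdots\alpha_p$ is $X^{\natural(p+1)}$-admissible'' --- a statement about the one subsurface lying between the concatenated loop and $\partial_0$ --- and the collection of conditions on the $p+1$ separate subsurfaces cut off by the individual curves. One must check that the band/tether neighbourhoods may be isotoped away without affecting homeomorphism types and that the boundary-connected-sum decomposition of $X^{\natural(p+1)}$ is precisely the one induced by the ordering of the tethers at the basepoint. Everything else (well-definedness on isotopy classes, mutual inverseness, and the forward direction on simplices) reduces to standard uniqueness of regular neighbourhoods and collars.
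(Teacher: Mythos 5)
Your proposal is correct and follows the same route the paper takes: the paper's proof is exactly the one-sentence ``unwinding the definitions'' remark preceding the lemma, and you have simply written out the details of that unwinding (regular-neighbourhood/collar-pushoff dictionary between lassos and admissible boundary loops, checked to be well-defined on isotopy classes and to match the simplex conditions in both directions). The only cosmetic point is that the subsurface bounded by the concatenation is $\bigcup_i N_{\alpha_i}$ \emph{together with} the connecting bands near the basepoint, which you do flag parenthetically; otherwise this is a faithful, more detailed version of the argument the paper has in mind.
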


To finish this section, we show that high-connectivity of $TC_n(A,X)$ implies high-connectivity of $W_n(A,X)_\bullet$.

\begin{thm}
\label{thm:reduction-to-TC}
Let $A$ and $X$ be two decorated surfaces with $X \neq \bD^2$ and fix an integer $k\geq 1$. If $TC_n(A,X)$ is $(\frac{n-2}{k})$-connected for all $n\geq 1$, then $W_n(A,X)_\bullet$ is $(\frac{n-2}{k})$-connected for all $n\geq 1$.
\end{thm}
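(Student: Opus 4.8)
The plan is to assemble two previously-established reductions: from $W_n(A,X)_\bullet$ to its associated simplicial complex $S_n(A,X)$, and from $S_n(A,X)$ to the tethered $X$-curve complex $TC_n(A,X)$ via the complete join map. First I would invoke Proposition~\ref{prop:from-W-to-S}: since $X\neq\bD^2$, the semi-simplicial set $W_n(A,X)_\bullet$ satisfies the hypotheses of Lemma~\ref{lem:homeomorphism-geometric-realisations}, so $\lvert W_n(A,X)_\bullet\rvert\cong\lvert S_n(A,X)\rvert$. Hence it suffices to prove that $S_n(A,X)$ is $(\tfrac{n-2}{k})$-connected for all $n\geq 1$, assuming the same connectivity for $TC_n(A,X)$.

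The core of the argument is then to upgrade the hypothesis on $TC_n(A,X)$ from mere high-connectivity to the \emph{weakly Cohen--Macaulay} property, so that Proposition~\ref{prop-join-conn} can be applied to the complete join $\cF\colon S_n(A,X)\to U_n(A,X)\cong TC_n(A,X)$ (Observation~\ref{obs-cjoin} and Lemma~\ref{lem-TC-U-cplx-iso}). The point is that the \emph{link} of a $p$-simplex of $TC_n(A,X)$ spanned by tethered $X$-curves $\alpha_0,\dots,\alpha_p$ is naturally isomorphic to $TC_{n-p-1}(A,X)$: cutting out the $p+1$ subsurfaces bounded by the $\alpha_i$ (and cutting along their tethers) leaves a surface homeomorphic to $A\natural X^{\natural(n-p-1)}$, and tethered $X$-curves in the link correspond to tethered $X$-curves in this complementary surface. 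By the standing hypothesis applied to $n-p-1$, this link is $(\tfrac{n-p-3}{k})$-connected; since $k\geq 1$ we have $\tfrac{n-p-3}{k}\geq \tfrac{n-2}{k}-p-1$, which gives exactly the $(\tfrac{n-2}{k}-p-1)$-connectivity of links required for $TC_n(A,X)$ to be wCM of ``dimension'' $\lfloor\tfrac{n-2}{k}\rfloor+1$ (in the sense of the paper's definition, where wCM of dimension $m$ means $(m-1)$-connected with links of $p$-simplices $(m-p-2)$-connected). Care is needed at small values of $n$ where the connectivity bound is negative, but $(-1)$-connected means non-empty and $(-2)$- or lower connectivity is vacuous, so the bookkeeping goes through; one should also record that $TC_n(A,X)$ is genuinely $\lfloor\tfrac{n-2}{k}\rfloor$-connected (not just $\tfrac{n-2}{k}$-connected as a real number), which is what the hypothesis literally provides.

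Having established that $TC_n(A,X)$ is wCM of the appropriate dimension, Proposition~\ref{prop-join-conn} applied to the complete join $S_n(A,X)\to TC_n(A,X)$ shows that $S_n(A,X)$ is also wCM of that dimension, in particular $(\tfrac{n-2}{k})$-connected. Combined with the homeomorphism $\lvert W_n(A,X)_\bullet\rvert\cong\lvert S_n(A,X)\rvert$ from the first step, this yields that $W_n(A,X)_\bullet$ is $(\tfrac{n-2}{k})$-connected for all $n\geq 1$, as desired.

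I expect the main obstacle to be the link identification: verifying carefully that the link of a $p$-simplex of $TC_n(A,X)$ is isomorphic to $TC_{n-p-1}(A,X)$ (or at least admits a map from it inducing the needed connectivity), including checking that the homeomorphism type of the complementary cut surface is indeed $A\natural X^{\natural(n-p-1)}$ regardless of which representatives and which simplex one picks, and that disjointness of tethered curves in the link corresponds correctly to disjointness in the cut surface. The rest is formal manipulation of connectivity indices and invocation of Proposition~\ref{prop-join-conn} and Proposition~\ref{prop:from-W-to-S}.
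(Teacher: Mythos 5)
Your proposal is correct and follows essentially the same route as the paper: reduce $W_n(A,X)_\bullet$ to $S_n(A,X)$ via Proposition~\ref{prop:from-W-to-S}, identify the link of a $p$-simplex of $TC_n(A,X)$ with $TC_{n-p-1}(A,X)$ to upgrade the connectivity hypothesis to the wCM property, and then push this through the complete join $S_n(A,X)\to U_n(A,X)\cong TC_n(A,X)$ via Proposition~\ref{prop-join-conn}. The extra care you take with floors and negative connectivity is sound but is left implicit in the paper.
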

\begin{proof}
First notice that the link of any $p$-simplex in $TC_n(A, X)$ is isomorphic to $TC_{n-p-1}(A, X)$, whose connectivity is at least $\frac{n-p-3}{k} \geq (\frac{n-2}{k})-p-1$ since $k\geq 1$. Hence $TC_n(A, X)$ is \emph{wCM} of dimension $(\frac{n-2}{k})+1$. By Lemma \ref{lem-TC-U-cplx-iso}, this means that $U_n(A,X)$ is \emph{wCM} of dimension $(\frac{n-2}{k})+1$. Observation \ref{obs-cjoin} and \cite[Proposition~3.5]{HW10} (recalled as Proposition \ref{prop-join-conn}) then imply that $S_n(A,X)$ is \emph{wCM} of dimension $(\frac{n-2}{k})+1$, in particular it is $(\frac{n-2}{k})$-connected. Proposition \ref{prop:from-W-to-S} then implies that $W_n(A,X)_\bullet$ is also $(\frac{n-2}{k})$-connected.
\end{proof}

\section{Proof of high-connectivity}
\label{s:high-connectivity}

In this section, we study the connectivity of the tethered $X$-curve complex $TC_n(A,X)$ for an arbitrary decorated surface $A$ and many interesting decorated surfaces $X$, namely for:
\begin{itemizeb}
\item any $X$ of finite type in \S\ref{stab:sft};
\item any telescope $X=\mathfrak{L}(\Sigma)$ in \S\ref{stab:sls};
\item any binary tree surface $X=\mathfrak{B}(\Sigma)$ in \S\ref{stab:sbs}.
\end{itemizeb}
The results in \S\ref{stab:sft} are essentially classical, the only difference being that we allow the starting surface $A$ to be of infinite type, which does not essentially change the arguments. An overview of the strategy of the proofs in \S\ref{stab:sls} and \S\ref{stab:sbs} is given in Figure \ref{fig:structure-of-proof}.

\begin{figure}[tb]
    \centering
    \begin{tikzpicture}
        [x=1mm,y=1mm]
        \node (W) at (30,20) {$W_n(A,X)_\bullet$};
        \node (S) at (30,0) {$S_n(A,X)$};
        \node (U) at (30,-20) {$U_n(A,X)$};
        \node (TC) [anchor=west] at (U.east) {$\hspace{-1.5ex} {} \hyperref[lem-TC-U-cplx-iso]{\cong} TC_n(A,X)$};
        \draw[<->,color=blue] (W) to[out=240,in=120] node[right,font=\footnotesize,align=center]{\hyperref[prop:from-W-to-S]{homeomorphic} \\ \hyperref[prop:from-W-to-S]{realisations}} (S);
        \draw[->>] (S) to node[left,font=\footnotesize,color=blue,align=center]{\hyperref[obs-cjoin]{complete} \\ \hyperref[obs-cjoin]{join}} (U);

        \begin{scope}[yshift=5mm]
        \node (TCL) at (85,-10) {$TC_n(A,\mathfrak{L}(\Sigma))$};
        \node (LS) [color=blue,font=\footnotesize] at (63,-15) {\hyperref[stab:sls]{$X = \mathfrak{L}(\Sigma)$}};
        \node (TLC) at (85,5) {$TLC_n(A,\Sigma)$};
        \node (TLCbar) at (120,5) {$\overline{TLC}_n(A,\Sigma)$};
        \node (LC) at (120,-10) {$LC_n(A,\Sigma)$};
        \node at (85,-2.5) [font=\footnotesize,anchor=west] {$[n-2]$};
        \node (conn) [font=\footnotesize,color=blue] at (110,-20) {\hyperref[prop-conn-tlc]{$(n-2)$-connected}};
        \inclusion[left]{left}{\footnotesize\textcolor{blue}{\hyperref[thm-conn-lins]{$(*)$}}}{(TLC)}{(TCL)}
        \inclusion[right]{above}{\footnotesize\textcolor{blue}{\hyperref[prop-conn-tlc]{$(*)$}}}{(TLC)}{(TLCbar)}
        \draw[->>] (TLCbar) to node[right,font=\footnotesize,color=blue,align=center]{\hyperref[prop-conn-tlc]{homotopy} \\ \hyperref[prop-conn-tlc]{equiv.}} (LC);
        \draw[->,color=blue] (conn) to (LC);
        \end{scope}

        \begin{scope}[yshift=-5mm]
        \node (TCB) at (85,-30) {$TC_n(A,\mathfrak{B}(\Sigma))$};
        \node (BS) [color=blue,font=\footnotesize] at (63,-25) {\hyperref[stab:sbs]{$X = \mathfrak{B}(\Sigma)$}};
        \node (TBC) at (85,-45) {$bTLC_n(A,\Sigma)$};
        \node (TBCinfty) at (120,-45) {$bTLC_n^\infty(A,\Sigma)$};
        \node (TCBinfty) at (120,-30) {$TC_n^\infty(A,\mathfrak{B}(\Sigma))$};
        \node (cont) [font=\footnotesize,color=blue] at (128,-56) {\hyperref[prop-TLCn-inf-cont]{contractible}};
        \incl{(TBC)}{(TCB)}
        \inclusion{right}{\footnotesize\textcolor{blue}{\hyperref[prop:TC-inf-contr]{$(*)$}}}{(TBCinfty)}{(TCBinfty)}
        \draw[<->,color=blue] (TBC) to[out=-30,in=210] node[below,font=\footnotesize]{\hyperref[rem:same-finite-skeleton-TLC]{same $(n-2)$-skeleton}} (TBCinfty);
        \draw[<->,color=blue] (TCB) to[out=30,in=150] node[above,font=\footnotesize]{\hyperref[lem:TC-inf-share-skl]{same $(n-2)$-skeleton}} (TCBinfty);
        \draw[->,color=blue] (cont) to (TBCinfty);
        \end{scope}

        \draw[<->,color=blue] (TC.north east) to (TCL);
        \draw[<->,color=blue] (TC.south east) to (TCB);
    \end{tikzpicture}
    \caption{The strategy of the high-connectivity proofs for $X=\mathfrak{L}(\Sigma)$ and for $X=\mathfrak{B}(\Sigma)$. The symbol \textcolor{blue}{$(*)$} on an inclusion indicates a \emph{bad simplex argument}. The label $[n-2]$ on one of the inclusions indicates that only the $(n-2)$-skeleton includes into the larger complex.}
    \label{fig:structure-of-proof}
\end{figure}
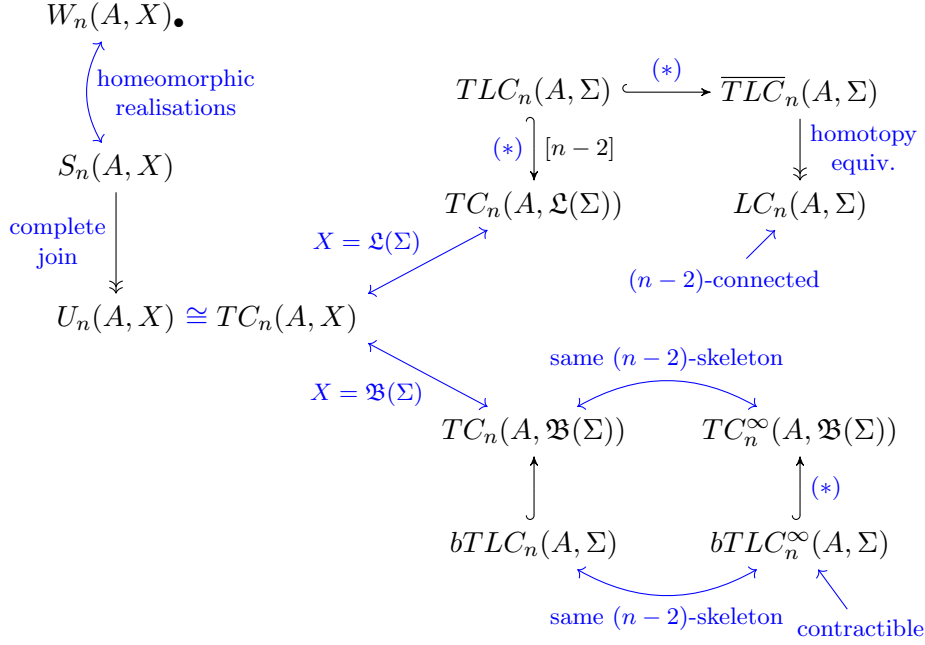

\subsection{Stabilisation by finite-type surfaces}
\label{stab:sft}

The results in this subsection are well-known when $A$ is of finite type. We indicate how this can be generalised to the infinite-type setting.

\begin{thm}
\label{thm-conn-fin}
Let $A$ be a decorated surface. Then:
\begin{enumerate}
\item\label{thm-conn-fin-pdisk} If $X$ is the once-punctured disc, then $TC_n(A,X)$ is $(n-2)$-connected. 
\item\label{thm-conn-fin-torus} If $X$ is the one-holed torus, then $TC_n(A,X)$ is $(\frac{g_A+n-3}{2})$-connected, where $g_A$ is the genus of $A$.
\end{enumerate}
\end{thm}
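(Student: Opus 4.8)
The plan is to prove both parts by induction on $n$, following the classical strategy for tethered curve complexes (as in \cite{HatcherVogtmann2017}), adapted to allow $A$ to be of infinite type. The base cases are the statements that $TC_n(A,X)$ is non-empty for small $n$, which follow by constructing an explicit tethered $X$-curve in $A\natural X^{\natural n}$: cut out the last copy of $X$ in the boundary connected sum and use a tether running from the basepoint to that summand. For the inductive step, the key observation is that the link of any $p$-simplex in $TC_n(A,X)$ is isomorphic to $TC_{n-p-1}(A,X)$ (as already noted in the proof of Theorem \ref{thm:reduction-to-TC}), so one may feed the inductive hypothesis back in through links. The main tool is the ``bad simplex'' machinery (Proposition \ref{prop-bad-sim} and Theorem \ref{thm--bad-sim}): one relates $TC_n(A,X)$ to a larger or auxiliary complex whose high-connectivity is easier to establish, typically a complex of \emph{tethered curves with no admissibility constraint} or a complex where the curve part is allowed to be an arbitrary non-separating curve bounding the appropriate subsurface, and bounds the connectivity of the good links using the link identification above.

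Concretely, for part \ref{thm-conn-fin-pdisk} (the once-punctured disc), a tethered $X$-curve is a tether together with a curve bounding a once-punctured disc, i.e.\ a loop around a puncture; the complex $TC_n(A,X)$ is essentially the complex of ``tethered punctures'' in $A\natural X^{\natural n}$, which is the surface $A$ with $n$ additional punctures. I would compare this to the full complex of tethered arcs to the $n$ marked points; the classical arguments (complexes of arcs/tethered arcs being highly connected, going back to Harer and refined by Hatcher--Wahl and Hatcher--Vogtmann) give $(n-2)$-connectivity, and the only modification needed is that the ``background'' surface $A$ may be infinite type — but since all the relevant moves (surgering intersection points of arcs/curves, applying the change-of-coordinates/Alexander method) take place in a compact subsurface containing all the curves involved, the infinite-type nature of $A$ plays no role. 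For part \ref{case:torus}, a tethered $X$-curve consists of a tether together with a genus-one subsurface with one boundary component (a ``tethered handle''), and the relevant model is the complex of tethered chains or the complex of genus-one subsurfaces studied in \cite{HW10}; here the genus of $A$ enters because one can only embed disjoint handles up to the available genus, giving the bound $\frac{g_A+n-3}{2}$. Again one runs a bad-simplex argument comparing $TC_n(A,X)$ with an auxiliary complex (e.g.\ first forgetting tethers, or first only remembering the symplectic data of the curves), using the link identification $\mathrm{Lk}(\sigma^{(p)})\cong TC_{n-p-1}(A,X)$ together with the fact that $g_{A\natural X^{\natural(n-p-1)}} = g_A + n-p-1$ when $X$ is the one-holed torus, so the inductive bound propagates correctly through links.

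The main obstacle I anticipate is purely bookkeeping rather than conceptual: carefully verifying that the classical high-connectivity results for arc and subsurface complexes, which in the literature are stated for finite-type surfaces with marked points, apply verbatim when the ambient surface $A$ is of infinite type. The point to check is that every step of those proofs — the surgery arguments that reduce the number of intersection points, the transitivity of the mapping class group action on suitably configured systems of curves/arcs, and the applications of the Alexander method — is supported on a compact subsurface, so that the presence of infinitely many ends of $A$ outside this compact region is irrelevant. Once this is granted (and the paper explicitly flags that these results are ``essentially classical'' with this being ``the only difference''), the inductive bad-simplex argument with the link identification $\mathrm{Lk}(\sigma^{(p)}) \cong TC_{n-p-1}(A,X)$ goes through exactly as in the finite-type case, yielding the stated ranges $(n-2)$ and $\frac{g_A+n-3}{2}$ respectively.
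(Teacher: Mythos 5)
Your proposal correctly identifies the two auxiliary complexes the paper uses --- the arc complex $\cA_n(A,X)$ of arcs to isolated ends for part \ref{thm-conn-fin-pdisk}, and the tethered chain complex $TCh(A\natural X^{\natural n})$ of \cite{HatcherVogtmann2017} for part \ref{thm-conn-fin-torus} --- and you correctly flag that the only genuine novelty is allowing $A$ to be of infinite type. However, the mechanism you propose for handling the infinite-type case is materially different from, and less precise than, the paper's. You suggest re-examining the internals of the classical finite-type proofs and checking that each surgery, transitivity, and Alexander-method step ``is supported on a compact subsurface'', which amounts to reproving \cite[Proposition~7.2]{HW10} and \cite[Proposition~5.5]{HatcherVogtmann2017} from scratch in the infinite-type setting. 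The paper instead treats those finite-type results as black boxes and runs a clean exhaustion-plus-compactness argument: choose a nested family $\{A_i\}$ of finite-type subsurfaces whose union is $A$, each containing the based boundary and $\pi_1$-injective in $A$ (and, for part \ref{thm-conn-fin-torus}, with genera realising $g_A$ or tending to infinity); then any map $S^k \to TC_n(A,X)$ has compact image, hence lands in some $TC_n(A_i,X)$, where the classical result kills it. This avoids having to verify anything inside the classical proofs. A second difference: for part \ref{thm-conn-fin-torus} you propose a bad-simplex argument to relate $TC_n(A,X)$ to the tethered-chain complex, whereas the paper observes that the ``tubular neighbourhood'' map $TCh(A\natural X^{\natural n}) \to U_n(A,X)$ is a \emph{complete join}, so by Remark \ref{rem-cjoin} the target is a retract of the source and connectivity transfers directly --- no bad-simplex bookkeeping is needed. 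And for part \ref{thm-conn-fin-pdisk} no comparison map is needed at all: $TC_n(A,X)$ is \emph{isomorphic} to the arc complex, not merely comparable to it. Your version would likely work after considerable extra labour, but you should at minimum state the $\pi_1$-injectivity requirement on your exhausting subsurfaces (otherwise the maps $TC_n(A_i,X)\to TC_n(A,X)$ need not be injective on isotopy classes) and, for the torus case, spell out how the genus of the exhausting pieces is arranged so that the bound $\tfrac{g_{A_i}+n-3}{2}$ eventually reaches $\tfrac{g_A+n-3}{2}$.
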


\begin{proof}
We consider each part in turn.
\begin{enumerate}
\item  Consider the surface $A\natural X^{\natural n}$, which has $n$ special punctures  coming from $X^{\natural n}$. According to Definition \ref{defn-teth-X-cur}, the vertices of $TC_n(A,X)$ are isotopy classes of tethered $X$-curves. Since $X$ is a punctured disc, just as in the proof of \cite[Lemma 5.21]{RWW17}, we may identify the complex $TC_n(A,X)$ with the arc complex $\mathcal{A}_n(A,X)$ with:
    \begin{itemize}
    \item vertices: isotopy classes of embedded arcs connecting the basepoint to an isolated end, which could lie in $X^{\natural n}$ or in $A$;
    \item $p$-simplices: a $(p+1)$-tuple of isotopy classes of arcs spans a $p$-simplex if there exist representative arcs that are pairwise disjoint outside the basepoint.
    \end{itemize}
When $A$ has $k$ isolated ends with $k< \infty$, this simplicial complex is shown to be Cohen-Macaulay of dimension $k+n-1$, in particular $(k+n-2)$-connected, in \cite[Proposition~7.2]{HW10}. (See also \cite[Theorem~2.3]{Tr14} and \cite[Theorem~3.2]{Tietz} for generalisations. When $A$ is the $2$-disc, this complex is actually contractible, by \cite[Theorem~2.48]{Damiolini}.) When $A$ has infinitely many isolated ends, we may choose an increasing family $\{A_i\}_{i\in \bN}$ of finite-type subsurfaces whose union is $A$, each $A_i$ contains the based boundary of $A$ and the inclusion maps induce injections on fundamental groups. Thus each $TC_n(A_i,X)$ is at least $(n-2)$-connected by the previous case. Consider a map $f \colon S^k \to TC_n(A,X)$ for $k\leq n-2$. Since $S^k$ is compact, its image must lie in the subcomplex $TC_n(A_i,X)$ for some $i$. So $f$ is nullhomotopic in $TC_n(A_i,X)$. Thus we have $[f]=0 \in \pi_k(TC_n(A,X))$. Hence we have shown that $TC_n(A,X)$ is $(n-2)$-connected when $A$ has infinitely many isolated ends. This proves part \eqref{thm-conn-fin-pdisk} of the theorem.

\item For part \eqref{thm-conn-fin-torus}, it is more convenient to use the loop model, i.e.~the complex $U_n(A,X)$ (see Definition \ref{defn:complex-Un(A,X)}). To show that $U_n(A,X)$ is highly-connected, we will consider the tethered chain complex $TCh(A\natural X^{\natural n},P)$ from \cite[\S 5.3]{HatcherVogtmann2017}. Here $P$ is an open interval in the based boundary containing the basepoint. To simplify, we may in fact assume that $P$ is the basepoint and denote the complex by $TCh(A\natural X^{\natural n})$. Recall that a \emph{chain} is a pair $(a,b)$ of curves intersecting transversely at a single point, together with an orientation of $b$; a \emph{tethered chain} is the union of a chain and a tether, where the tether is an arc connecting the basepoint with the positive side of the (oriented) $b$-curve of the chain; the interior of the tether is required to be disjoint from the chain. The complex $TCh(A\natural X^{\natural n})$ is then defined as follows:
    \begin{itemizeb}
    \item vertices: isotopy classes of tethered chains in $A \natural X^{\natural n}$;
    \item $p$-simplices: a $(p+1)$-tuple of isotopy classes of tethered chains spans a $p$-simplex if there exist representatives that are pairwise disjoint outside the basepoint.
    \end{itemizeb}
We claim that $TCh(A\natural X^{\natural n})$ is $(\frac{g_A+n-3}{2})$-connected. This holds if $A$ is of finite type by \cite[Proposition 5.5]{HatcherVogtmann2017}. If $A$ is of infinite type, we may do a similar trick as in part \eqref{thm-conn-fin-pdisk}. Let us choose an increasing family $\{A_i\}_{i\in \bN}$ of finite-type subsurfaces whose union is $A$, each $A_i$ contains the based boundary of $A$, the inclusion maps induce injections on fundamental groups and:
    \begin{itemizeb}
    \item the genus $g_{A_1}$ of $A_1$ equals $g_A$ if $A$ has finite genus;
    \item $\lim_{i\to \infty} (g_{A_i}) = \infty$ if $g_A=\infty$.
    \end{itemizeb}
Each $TCh(A_i\natural X^{\natural n})$ is $\bigl(\frac{g_{A_i} + n-3}{2}\bigr)$-connected by \cite[Proposition 5.5]{HatcherVogtmann2017}. Consider a map $f \colon S^k \to TCh(A\natural X^{\natural n})$ for $k\leq \frac{g_A+n-3}{2}$. Since $S^k$ is compact, its image must lie in the subcomplex $TCh(A_i\natural X^{\natural n})$ for some $i$. By increasing $i$ if necessary, we may ensure that $k\leq \frac{g_{A_i}+n-3}{2}$ so that $f$ is nullhomotopic in $TCh(A_i\natural X^{\natural n})$. It thus follows that we also have $[f]=0 \in \pi_k(TCh(A\natural X^{\natural n}))$. Hence $TCh(A\natural X^{\natural n})$ is $(\frac{g_A+n-3}{2})$-connected.

Given any tethered chain, one may take a small tubular neighbourhood of it; the boundary of this tubular neighbourhood is a based loop cutting off a torus from $A\natural X^{\natural n}$, so its isotopy class is a vertex in $U_n(A,X)$. One may check that this construction describes a simplicial map $TCh(A \natural X^{\natural n}) \to U_n(A,X)$ that is a complete join (a simplex of $U_n(A,X)$ is represented by a collection of based loops that cut off disjoint tori; lifting this to a simplex of $TCh(A\natural X^{\natural n})$ consists in choosing a ``core'' tethered chain for each torus, which may be chosen independently for each vertex of the simplex). By Remark \ref{rem-cjoin}, the complex $U_n(A,X)$ is a retract of $TCh(A\natural X^{\natural n})$ and hence it is also $(\frac{g_A+n-3}{2})$-connected. By Lemma \ref{lem-TC-U-cplx-iso} it then follows that $TC_n(A,X)$ is $(\frac{g_A+n-3}{2})$-connected.\qedhere
\end{enumerate}
\end{proof}

\subsection{Stabilisation by telescopes.}
\label{stab:sls}

In this section, we prove that the tethered $X$-curve complex $TC_n(A,X)$ is highly-connected for $1$-holed linear surfaces $X = \mathfrak{L}(\Sigma)$ that are \emph{telescopes}, i.e.~that have the property that the special end of $\mathfrak{L}(\Sigma)$ (corresponding to the unique end of the linear tree $\mathfrak{L}$) is topologically distinguished in the end space of $\mathfrak{L}(\Sigma)$; see Definition \ref{defn:telescope}.

\begin{defn}
Let $\Sigma$ be a connected surface with empty boundary, possibly of infinite type. Recall from Definition \ref{def:linear-binary-tree-surfaces} the \emph{linear surface} $\mathrm{Gr}_\mathfrak{L}(\Sigma)$; we will consider its $1$-holed version $\mathfrak{L}(\Sigma) = \mathrm{Gr}_\mathfrak{L}(\Sigma)_\circ$ (see Notation \ref{notation:one-holed-tree-surfaces}). We note that the surface $\mathfrak{L}(\Sigma)$ may be realised as the union of a family of subsurfaces $\{L_n\}_{n\geq 0}$ constructed as follows:
\begin{itemizeb}
\item $L_0$ is a cylinder with a basepoint in one boundary component. We refer to this boundary as its \emph{based boundary};
\item $L_{n}$ is obtained from $L_{n-1}$ by attaching a $2$-holed copy of $\Sigma$ along the non-based boundary of $L_{n-1}$; the basepoint and based boundary of $L_n$ are inherited from $L_{n-1}$.
\end{itemizeb}
We refer to the non-based boundary components of the $L_n$ as the \emph{level curves} of $\mathfrak{L}(\Sigma)$. The end of $\mathfrak{L}(\Sigma)$ accumulated by the level curves is called the \emph{level end} of $\mathfrak{L}(\Sigma)$. If $E$ denotes the space of ends of $\Sigma$, then the space of ends of $\mathfrak{L}(\Sigma)$ is the one-point compactification $(E\omega)^+$ of the disjoint union $E\omega$ of countably infinitely many copies of $E$; the level end corresponds to the point at infinity.
\end{defn}

\begin{ex}\hspace{0pt}
\begin{enumerate}
    \item If $\Sigma = T^2$ is the torus, then $\mathrm{Gr}_\mathfrak{L}(T^2)$ is the \emph{Loch Ness monster surface}, which is illustrated in Figure \ref{fig:Loch-Ness-monster-surface}. Removing the interior of the disc on the left-hand side of the figure, we obtain $\mathfrak{L}(T^2)$. The level curves are also illustrated in the figure.
    \item If $\Sigma = \bR^2$ is the plane (once-punctured sphere), then $\mathfrak{L}(\bR^2)$ is the ($1$-holed) \emph{flute surface}.
\end{enumerate}
\end{ex}

Now for any decorated surface $A$ and $n\geq 1$, the surface $A\natural \mathfrak{L}(\Sigma)^{\natural n}$ inherits a level structure from the $n$ copies of $\mathfrak{L}(\Sigma)$; see Figure \ref{fig:Ln} for an example. Recall that a \emph{tethered curve} in a based surface is the union of a tether and a simple closed curve, where a \emph{tether} is an embedded arc connecting the basepoint of the surface to the curve and whose interior is disjoint from the curve. A \emph{tethered level curve} in a based surface with a level structure is a tethered curve where the curve is one of the level curves of the surface: it therefore consists in a (discrete) choice of one of the level curves together with an arc attaching it to the basepoint.

\begin{figure}[tb]
    \centering
    \includegraphics[scale=0.7]{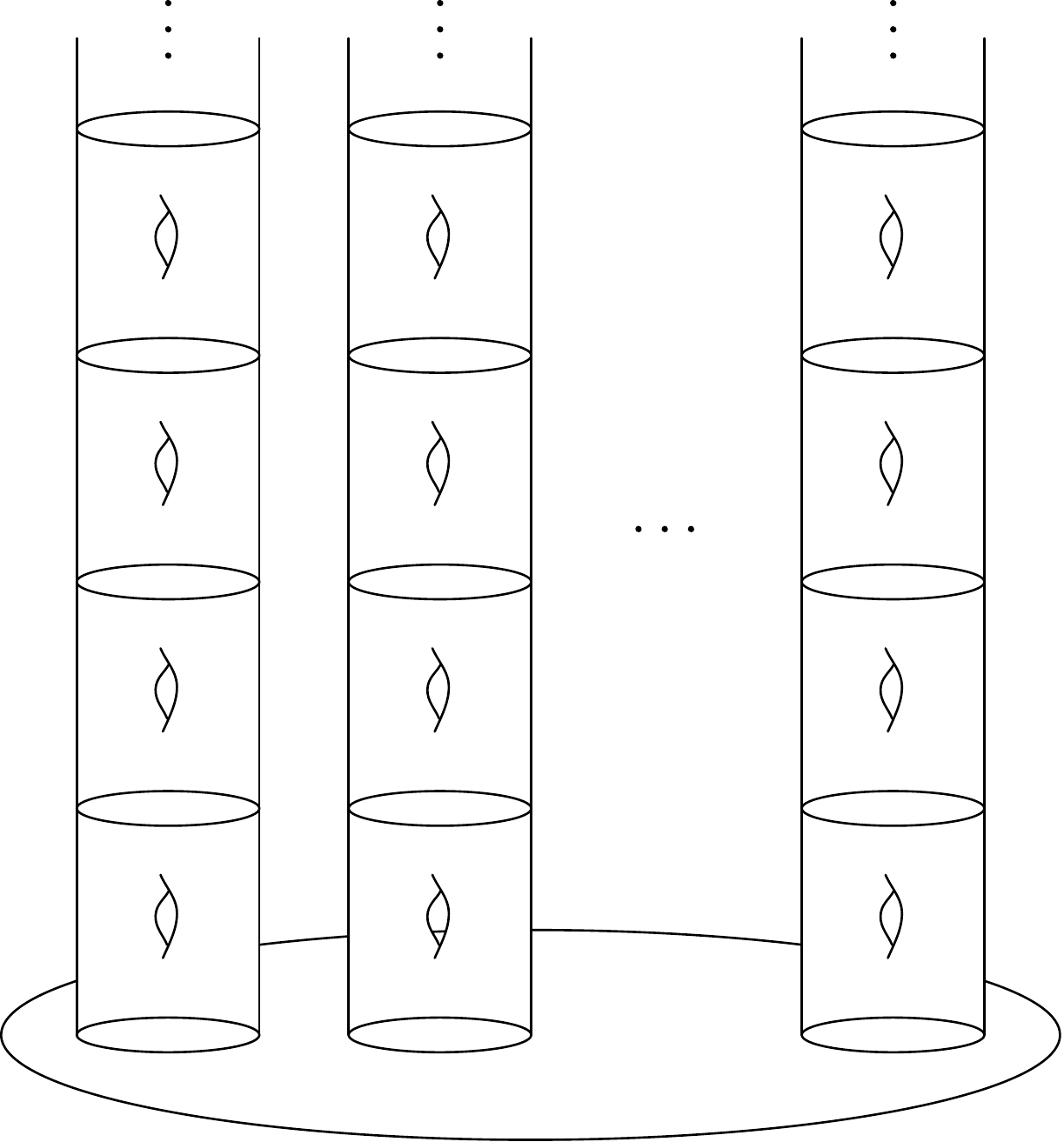}
    \caption{The level structure of $D^2\natural \mathfrak{L}(T^2)^{\natural n}$, inherited from that of $\mathfrak{L}(T^2)$.}
    \label{fig:Ln}
\end{figure}

\begin{defn}
\label{defn:TLC}
The \emph{tethered level curve complex} $TLC_n(A,\Sigma)$ is the simplicial complex with:
\begin{enumerate}
\item vertices: isotopy classes of tethered level curves in $A\natural \mathfrak{L}(\Sigma)^{\natural n}$;
\item $p$-simplices: a $(p+1)$-tuple of isotopy classes of tethered level curves spans a $p$-simplex if there exist representatives that are pairwise disjoint outside the basepoint.
\end{enumerate}
\end{defn}

More generally, we shall need to consider tethered level curve complexes defined on certain (``tame'') subsurfaces of $A\natural \mathfrak{L}(\Sigma)^{\natural n}$.

\begin{defn}
A subsurface $S \subseteq A\natural \mathfrak{L}(\Sigma)^{\natural n}$ is called \emph{tame} if it is connected, closed (as a subset), contains the basepoint of $A\natural \mathfrak{L}(\Sigma)^{\natural n}$ and each level end of $A\natural \mathfrak{L}(\Sigma)^{\natural n}$ has a neighbourhood that is either contained in $S$ or disjoint from $S$.

If $S$ is a tame subsurface of $A\natural \mathfrak{L}(\Sigma)^{\natural n}$, then a \emph{level end of $S$} means a level end of $A\natural \mathfrak{L}(\Sigma)^{\natural n}$ that is also an end of $S$. If a level curve of $A\natural \mathfrak{L}(\Sigma)^{\natural n}$ lies in $S$ and cuts off a level end of $S$ --- i.e.~if the corresponding level end of $A\natural \mathfrak{L}(\Sigma)^{\natural n}$ is also a (level) end of $S$ --- then we call it a \emph{level curve of $S$}. In this way, $S$ inherits a level structure from $A\natural \mathfrak{L}(\Sigma)^{\natural n}$.

We may therefore generalise Definition \ref{defn:TLC} to tame subsurfaces $S$ of $A\natural \mathfrak{L}(\Sigma)^{\natural n}$. A \emph{tethered level curve of $S$} is a tethered curve in $S$ where the curve is one of the level curves of $S$. If $S$ has $k \in \{0,\ldots,n\}$ level ends, we denote by $TLC_k(S)$ the simplicial complex whose vertices are isotopy classes of tethered level curves in $S$ and where a tuple of vertices spans a simplex if there exist representatives that are pairwise disjoint outside of the basepoint.
\end{defn}

\begin{prop}
\label{prop-conn-tlc}
The simplicial complex $TLC_n(A,\Sigma)$ is $(n-2)$-connected.
\end{prop}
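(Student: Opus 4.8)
The plan is to prove $(n-2)$-connectivity of $TLC_n(A,\Sigma)$ by induction on $n$, following the strategy depicted in Figure~\ref{fig:structure-of-proof}: first relate $TLC_n(A,\Sigma)$ to an auxiliary complex $\overline{TLC}_n(A,\Sigma)$ via a bad-simplex argument, then deduce connectivity of $\overline{TLC}_n(A,\Sigma)$ from that of a level-curve complex $LC_n(A,\Sigma)$ (the ``untethered'' version) together with a fibration/homotopy-equivalence argument. The base case $n=1$ is vacuous (a $(-1)$-connected, i.e.\ nonempty, complex: one just exhibits a single tethered level curve in $A\natural\mathfrak{L}(\Sigma)$, which exists since $\mathfrak{L}(\Sigma)$ has a level structure). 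For the inductive step, the key structural observation is that the link of a $p$-simplex $\langle[\alpha_0],\dots,[\alpha_p]\rangle$ in $TLC_n(A,\Sigma)$ is isomorphic to $TLC_k(S)$ for the tame subsurface $S$ obtained by cutting along the $\alpha_i$; since each level curve that we cut along removes one level end, $S$ is (up to homeomorphism) $A'\natural\mathfrak{L}(\Sigma)^{\natural(n-p-1)}$ for a suitable decorated $A'$, so the link is $(n-p-3)$-connected by induction. This shows $TLC_n(A,\Sigma)$ is weakly Cohen–Macaulay of dimension $n-1$ once we know it is $(n-2)$-connected, but of course the $(n-2)$-connectivity itself is the content to be proved.

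The substantive step is to prove $(n-2)$-connectivity of $TLC_n(A,\Sigma)$ directly. Here I would use a bad-simplex argument (Proposition~\ref{prop-bad-sim}) comparing $TLC_n(A,\Sigma)$ with the larger complex $\overline{TLC}_n(A,\Sigma)$, in which one allows tethered level curves together with some extra flexibility (for instance tethers that may pass through earlier level regions, or curves attached at non-minimal levels) so that $\overline{TLC}_n(A,\Sigma)$ surjects onto the level-curve complex $LC_n(A,\Sigma)$ with contractible (or highly connected) fibres. One then shows: (i) $LC_n(A,\Sigma)$ is $(n-2)$-connected — this should follow from a standard ``cut along a level curve'' induction, using that a level curve always exists and that cutting reduces $n$ by one, combined with the fact that the level end is \emph{topologically distinguished} (this is precisely where the telescope hypothesis enters: it guarantees that the homeomorphism type of the complementary surface, hence the inductive structure, is controlled, and that any two level curves can be compared); (ii) the forgetful map $\overline{TLC}_n(A,\Sigma)\twoheadrightarrow LC_n(A,\Sigma)$ is a homotopy equivalence, because the space of tethers to a fixed level curve is contractible (it is, up to homotopy, a space of arcs in a surface with prescribed endpoints, which is connected and simply connected after fixing the curve); and (iii) the bad simplices — those tethered level curves in $\overline{TLC}$ that fail to lie in $TLC$, roughly those whose tether is ``knotted'' around lower-level curves — have good links that are themselves highly connected, again by an inductive identification with a smaller $TLC$ complex. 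Chaining these together via Proposition~\ref{prop-bad-sim} (and Theorem~\ref{thm--bad-sim}) yields that $TLC_n(A,\Sigma)$ is $(n-2)$-connected.

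The main obstacle I anticipate is step (i) together with the correct bookkeeping in step (iii): one must verify that cutting $A\natural\mathfrak{L}(\Sigma)^{\natural n}$ along a tethered level curve (or a disjoint collection of them) really does produce a surface of the form (decorated $A'$) $\natural\,\mathfrak{L}(\Sigma)^{\natural(n-p-1)}$, and this is exactly where the assumption that the level end is topologically distinguished is indispensable — without it, different level curves could cut off non-homeomorphic pieces, the complementary surface would not be recognizable, and the induction would break. Concretely, the point is that a neighbourhood of the level end of $\mathfrak{L}(\Sigma)$ is, by the telescope hypothesis, characterized up to homeomorphism among neighbourhoods of ends, so cutting along a level curve of $A\natural\mathfrak{L}(\Sigma)^{\natural n}$ removes exactly one ``telescope tail'' and leaves the rest intact. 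A secondary technical point is handling the infinite-type surface $A$ (and the infinitely many level ends when $\Sigma$ itself is complicated): as in the proof of Theorem~\ref{thm-conn-fin}, one reduces to a finite-type exhaustion $\{A_i\}$ using compactness of spheres, since any map $S^k\to TLC_n(A,\Sigma)$ with $k\leq n-2$ has image in a finite subcomplex and hence factors through $TLC_n(A_i,\Sigma)$ for some $i$. I would carry out the argument first assuming $A$ (and $\Sigma$) of finite type, establish the clean inductive statement there, and then append the exhaustion argument at the end.
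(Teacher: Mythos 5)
Your outline reproduces the shape of the paper's argument (enlarge $TLC_n$ to a complex $\overline{TLC}_n$, project to the level curve complex $LC_n$, and compare via bad simplices), but two of the load-bearing steps are wrong or missing. First, you invoke the telescope hypothesis (the level end being topologically distinguished) as ``indispensable'' both for the connectivity of $LC_n(A,\Sigma)$ and for recognising the complement after cutting. This proposition does not use that hypothesis at all: the vertices of $TLC_n(A,\Sigma)$ are \emph{tethered level curves}, i.e.\ the curve is one of the prescribed level curves of the fixed level structure, so cutting along it removes exactly one level end by construction, with no topological recognition problem. (The topologically distinguished hypothesis is only needed later, in Corollary \ref{thm-conn-lins}, where one passes from level curves to arbitrary tethered $\mathfrak{L}(\Sigma)$-curves and must know that each cut-off copy of $\mathfrak{L}(\Sigma)$ contains at most one level end.) Moreover, the connectivity of the level curve complex needs no induction and no hypothesis: by definition $p+1$ level curves span a simplex of $LC$ exactly when the level ends they cut off are pairwise distinct, so $LC$ is the join of the $k$ discrete sets of level curves associated to the $k$ level ends, hence $(k-2)$-connected since connectivity plus two is additive under joins. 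Your proposed ``cut along a level curve induction'' for step (i) is therefore both unnecessary and, as justified, ungrounded.

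Second, your bad-simplex comparison between $TLC$ and $\overline{TLC}$ is not set up correctly. The complexes $TLC_k(S)$ and $\overline{TLC}_k(S)$ have the \emph{same} vertices; the enlargement consists of allowing simplices in which several tethers are attached to the same level curve. Consequently there are no ``bad vertices'' and no notion of a tether being ``knotted around lower-level curves''; the bad simplices are precisely those simplices of $\overline{TLC}$ each of whose level curves carries at least two tethers, and the key computation is that the good link of such a $p$-simplex is a join $\ast_{i=1}^{d} TLC_{k_i}(S_i)$ over the basepoint-containing components $S_i$ of the cut surface, with $\sum_i k_i \geq k-\tfrac{p+1}{2}$ and each $k_i<k$, so that induction (on the number $k$ of level ends of a \emph{tame subsurface}, not merely on $n$ for $A\natural\mathfrak{L}(\Sigma)^{\natural n}$) gives connectivity at least $k-p-2$. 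Without the correct notion of badness and without the strengthened statement over tame subsurfaces, the good links cannot be identified and the bad-simplex argument as you describe it cannot be run. Finally, for the homotopy equivalence $\overline{TLC}_k(S)\to LC_k(S)$ one cannot argue simplexwise (the map is not injective on simplices, so it is not a complete join); the paper applies the Hatcher--Vogtmann fibering criterion, showing the fibre over the barycentre of a simplex (the complex of tether systems for its level curves) deformation retracts onto the star of a fixed tether system via a Hatcher flow. Your appeal to ``the space of tethers to a fixed curve is contractible'' gestures at this but is not a proof of the statement actually needed. The exhaustion over finite-type subsurfaces you append is harmless but not required once the argument is set up as above.
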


\begin{rem}
Clearly two vertices whose level curves lie in the same copy of $\mathfrak{L}(\Sigma)$ cannot be joined by an edge. The simplicial complex $TLC_n(A,\Sigma)$ therefore has dimension $n-1$, so Proposition \ref{prop-conn-tlc} implies that it is homotopy equivalent to a wedge of $(n-1)$-spheres.
\end{rem}

\begin{proof}[Proof of Proposition \ref{prop-conn-tlc}]
We will prove the following stronger statement: for any $n,k\geq 0$ and any tame subsurface $S$ of $A\natural \mathfrak{L}(\Sigma)^{\natural n}$ with $k$ level ends, the simplicial complex $TLC_k(S)$ is $(k-2)$-connected. To prove this statement, we will include $TLC_k(S)$ into a larger complex $\overline{TLC}_k(S)$, then project this complex onto the \emph{level curve complex} $LC_k(S)$. This is the complex with level curves of $S$ as vertices and $p+1$ of them span a $p$-simplex if the level ends that they cut off are pairwise distinct. We first observe that $LC_k(S)$ is $(k-2)$-connected, then show that the projection $\overline{TLC}_k(S) \to LC_k(S)$ is a homotopy equivalence using a fibering argument of Hatcher and Vogtmann~\cite[Corollary 2.7]{HatcherVogtmann2017} and then apply a bad simplices argument to show that $TLC_k(S)$ is also $(k-2)$-connected by induction on $k$.

\uline{The level curve complex:} The complex $LC_k(S)$ is the join of the $k$ (zero-dimensional) subcomplexes given by the level curves that cut off a fixed level end. Since connectivity plus $2$ is additive under joins, it follows that $LC_k(S)$ is $(k-2)$-connected.

\uline{The larger complex:} The complex $\overline{TLC}_k(S)$ has the same vertices as $TLC_k(S)$ but simplices are allowed to share level curves, in other words, $p+1$ isotopy classes of tethered level curves span a $p$-simplex if they admit representatives where each pair satisfies one of the following two conditions:
\begin{itemizeb}
\item they are pairwise disjoint except at the basepoint and (hence) their level curves cut off different level ends of $S$;
\item the interiors of their tethers are disjoint and their level curves are equal.
\end{itemizeb}

\uline{The projection:} The projection $\pi \colon \overline{TLC}_k(S) \to LC_k(S)$ forgets the tethers and maps a tethered level curve to its level curve. This is a simplicial map but it is not a complete join, since it fails to be injective on individual simplices in general. We instead want to apply \cite[Corollary 2.7]{HatcherVogtmann2017} here. The fibre $\pi^{-1}(y)$ of this projection over the barycentre $y$ of a simplex $\sigma$ of $LC_k(S)$ consists of all tether systems for $\sigma$, i.e.~tethers that are only attached to level curves in $\sigma$. Let us fix one system $\sigma_0 \in \pi^{-1}(y)$ that has one tether attached to each level curve in $\sigma$ (so it lies in the subcomplex $TLC_k(S)$, although we will not use this fact). We will deformation retract the fibre $\pi^{-1}(y)$ into the star of $\sigma_0$ using a Hatcher flow \cite{Hat91}, see also \cite[\S 2.4]{HatcherVogtmann2017}. Denote the tethers of $\sigma_0$ by $t_0,\ldots,t_p$. Given any other tether system lying in this fibre, we get rid of the intersections of its tethers with the $t_i$ by putting it into minimal position with respect to the $t_i$, then pushing the intersection points with $t_0$ towards the basepoint and discarding the closed curve part. In this way, step by step, we get rid of all intersection points of the tethers with $t_0$ and then $t_1, \ldots, t_p$. At the end, we have a tether system that is disjoint from each $t_i$, so it lies in the star of $\sigma_0$. Thus we have deformation retracted $\pi^{-1}(y)$ onto the star of $\sigma_0$, which is contractible, so $\pi^{-1}(y)$ is contractible for the barycentre $y$ of each simplex $\sigma$ of $LC_k(S)$. Hence the projection $\pi$ is a homotopy equivalence by \cite[Corollary 2.7]{HatcherVogtmann2017}. Thus $\overline{TLC}_k(S)$ is also $(k-2)$-connected.

\uline{The inclusion:} Finally, we analyse the connectivity of the inclusion map $TLC_k(S) \hookrightarrow \overline{TLC}_k(S)$ using a bad simplices argument. We call a simplex in $\overline{TLC}_k(S)$ \emph{bad} if each of its level curves has at least two tethers attached to it. Notice that this definition clearly satisfies conditions \eqref{bad-sim-1} and \eqref{bad-sim-2} of \S\ref{sub-bad-sim}; also notice that vertices cannot be bad. Given a bad $p$-simplex $\sigma$, the surface obtained by cutting $S$ open along a system of tethered level curves representing $\sigma$ has several components. Some components (more precisely, one component for each level curve that we have cut along) do not contain the basepoint, so they will not contribute to the good link $G_\sigma$. Some components do contain the basepoint but do not contain any level ends, so they will also not contribute to $G_\sigma$. In any case, if we denote by $S_1,\ldots,S_d$ the components that contain the basepoint, then each $S_i$ is a tame subsurface of $A\natural \mathfrak{L}(\Sigma)^{\natural n}$ and $G_\sigma$ may be identified with the join complex $\ast_{i=1}^d TLC_{k_i}(S_i)$, where $k_i$ is the number of level ends of $S_i$. As noted above, the components that are cut off by the level curves of $\sigma$ do not contain the basepoint and each removes one level end from $S$ when it is cut off. Each level curve of $\sigma$ corresponds to at least two vertices, since $\sigma$ is bad, so the number of level ends that are removed is at most $\frac{p+1}{2}$. Thus we have $\sum_{i=1}^d k_i \geq k-\frac{p+1}{2}$. We also have $k_i < k$ for each $i$, since at least one level end has been removed when passing from $S$ to $S_i$. By induction, we know that $TLC_{k_i}(S_i)$ is at least $(k_i - 2)$-connected. Since connectivity plus $2$ is additive under joins, we deduce that the connectivity of $G_\sigma$ is at least
\[
\sum_{i=1}^d k_i - 2 \geq k-\tfrac{p+1}{2}-2 \geq k-p-2,
\]
where the last inequality holds since $p\geq 1$ by our earlier observation that vertices cannot be bad. Therefore, by Theorem \ref{thm--bad-sim} and the fact that $\overline{TLC}_k(S)$ is $(k-2)$-connected, we conclude that $TLC_k(S)$ is also $(k-2)$-connected.
\end{proof}

We have not so far needed to assume that the level end of the linear surface $\mathfrak{L}(\Sigma)$ is topologically distinguished (see Definition \ref{defn:topologically-distinguished}) in the end space of $\mathfrak{L}(\Sigma)$. This assumption will now be used to deduce high-connectivity of the complex $TC_n(A,\mathfrak{L}(\Sigma))$.

\begin{rem}
A key observation about topologically distinguished ends is the following. If a surface $X$ has a topologically distinguished end, then $X\natural X$ cannot be homeomorphic to $X$. We also mention the following alternative characterisation of topologically distinguished points. A point $x \in E$ is topologically distinguished if and only if the following holds: if a neighbourhood $U$ of a different point $y\neq x$ in $E$ contains a homeomorphic copy of a neighbourhood of $x$, then $U$ contains $x$.
\end{rem}

\begin{cor}
\label{thm-conn-lins}
Let $\mathfrak{L}(\Sigma)$ be a telescope, i.e.~a linear surface whose level end is topologically distinguished. Assume that $\Sigma \neq \bS^2$. Then the complex $TC_n(A,\mathfrak{L}(\Sigma))$ is $(n-3)$-connected.
\end{cor}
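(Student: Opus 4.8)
The plan is to deduce the corollary from Proposition~\ref{prop-conn-tlc} by a bad simplices argument (\S\ref{sub-bad-sim}). Write $X = TC_n(A,\mathfrak{L}(\Sigma))$, let $Y \subseteq X$ be the full subcomplex on those vertices $[\alpha]$ whose curve part is isotopic to a level curve of $A \natural \mathfrak{L}(\Sigma)^{\natural n}$, and assume $n \geq 3$ (for $n \leq 2$ the statement is vacuous or just asserts $X \neq \varnothing$, which is clear).

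The first step is to show that $Y$ is $(n-3)$-connected. The point is that every tethered level curve is a tethered $\mathfrak{L}(\Sigma)$-curve in the sense of Definition~\ref{defn-teth-X-cur}: its curve part cuts off the ``tail'' of one of the $n$ copies of $\mathfrak{L}(\Sigma)$, which is again homeomorphic to $\mathfrak{L}(\Sigma)$ by the classification of surfaces, while the complementary ``finite'' piece $L_m$ can be absorbed into one of the \emph{other} $\mathfrak{L}(\Sigma)$-summands by means of the end-space homeomorphism $(E\omega)^+ \sqcup (E \times \{1,\dots,k\}) \cong (E\omega)^+$, where $E = \Ends(\Sigma)$. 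The same computation shows that every simplex of $TLC_n(A,\Sigma)$ of dimension $\leq n-2$ is a simplex of $X$; the requirement of a spare $\mathfrak{L}(\Sigma)$-summand in the absorption step is precisely what restricts this to the $(n-2)$-skeleton. Hence $Y$ contains $TLC_n(A,\Sigma)^{(n-2)}$, which is $(n-3)$-connected since $TLC_n(A,\Sigma)$ is $(n-2)$-connected and has dimension $n-1$ (Proposition~\ref{prop-conn-tlc}); and $Y$ is obtained from this subcomplex by attaching cells of dimension $n-1$ along their boundaries, so it remains $(n-3)$-connected.

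Next I would run the bad simplices argument, calling a simplex of $X$ \emph{bad} when each of its vertices is a tethered $\mathfrak{L}(\Sigma)$-curve whose curve part is \emph{not} isotopic to a level curve; conditions \eqref{bad-sim-1} and \eqref{bad-sim-2} hold trivially, and for a bad $p$-simplex $\sigma$ the good link $G_\sigma$ consists of the simplices of $\Lk_X(\sigma)$ all of whose vertices are tethered level curves. As in the proof of Theorem~\ref{thm:reduction-to-TC}, $\Lk_X(\sigma)$ is the tethered $\mathfrak{L}(\Sigma)$-curve complex of the complementary surface $M$, which is homeomorphic to $A \natural \mathfrak{L}(\Sigma)^{\natural(n-p-1)}$ because $\sigma$ is a simplex of $X$. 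Here the hypothesis that $\mathfrak{L}(\Sigma)$ has a topologically distinguished level end enters, via the alternative characterisation recalled in the Remark before the statement: it ensures that the copy of $\mathfrak{L}(\Sigma)$ cut off by each vertex of $\sigma$ carries a well-defined level end, that distinct vertices of $\sigma$ consume distinct ones among the $n$ level ends of $A \natural \mathfrak{L}(\Sigma)^{\natural n}$, and hence that $M$ inherits a level structure with exactly $n-p-1$ level ends in such a way that the level curves of $M$ coincide with the ambient level curves contained in $M$. With this identification, $G_\sigma$ contains $TLC_{n-p-1}(A,\Sigma)^{(n-p-3)}$ (by the argument of the previous paragraph applied to $M$) and differs from it only by cells of dimension $\geq n-p-2$, so $G_\sigma$ is $(n-p-4)$-connected by Proposition~\ref{prop-conn-tlc}. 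Since $n-p-4 = (n-3) - \dim(\sigma) - 1$, Theorem~\ref{thm--bad-sim}\,(2) then yields that $X = TC_n(A,\mathfrak{L}(\Sigma))$ is $(n-3)$-connected. (The hypothesis $\Sigma \neq \bS^2$ is used only to guarantee that $\mathfrak{L}(\Sigma)$ is not a cylinder, so that it is a legitimate stabilising surface and the complete-join reduction of \S\ref{s:simplifying} and the bad simplices machinery apply in the non-degenerate case.)

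The routine parts are the skeletal connectivity bookkeeping and the end-space absorption. I expect the main obstacle to be the good-link analysis in the third step: one has to show that cutting along the curves of a bad simplex removes exactly one level end per curve, so that the complement is once more of the form $A \natural \mathfrak{L}(\Sigma)^{\natural m}$ with a compatible level structure. This is exactly where the topological distinguishedness of the level end is indispensable — and it is also the point that has no analogue in the binary tree case, where the corresponding ``level end'' is a Cantor set and a genuinely different strategy is required.
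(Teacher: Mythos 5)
Your proof follows essentially the same route as the paper's: identify the $(n-2)$-skeleton of $TLC_n(A,\Sigma)$ as a subcomplex of $TC_n(A,\mathfrak{L}(\Sigma))$, invoke Proposition~\ref{prop-conn-tlc}, and run a bad simplices argument in which the topological distinguishedness of the level end controls the good links. The one cosmetic difference is that you run the bad simplices argument on the full complexes (with $Y$ the full subcomplex on level-curve vertices, shown $(n-3)$-connected by a cell-attachment observation), whereas the paper first restricts to the $(n-2)$-skeletons and extracts the connectivity of the full complex at the very end; both variants are fine.

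Two small inaccuracies are worth flagging. First, you assert that each copy of $\mathfrak{L}(\Sigma)$ cut off by a bad vertex ``carries a well-defined level end'' and that ``distinct vertices consume distinct ones among the $n$ level ends,'' so that $M$ has \emph{exactly} $n-p-1$ level ends. What topological distinguishedness actually gives you is \emph{at most} one: the cut-off copy of $\mathfrak{L}(\Sigma)$ has a unique end that looks like the level end, but this end could correspond to an end of $A$ rather than to one of the $n$ ambient level ends (this happens, for instance, if $A$ itself contains subsurfaces homeomorphic to $\mathfrak{L}(\Sigma)$). So the correct statement is $n' \geq n-p-1$, which is also what the paper proves. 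Fortunately only this lower bound on $n'$ is needed — the connectivity of the good link improves as $n'$ grows — so your conclusion stands, but the intermediate claim should be weakened and the corresponding $A$ should be allowed to change to some $A'$. Second, the hypothesis $\Sigma \neq \bS^2$ is needed specifically to ensure the vertex map $TLC_n(A,\Sigma)_0 \to TC_n(A,\mathfrak{L}(\Sigma))_0$ is injective (distinct level curves are non-isotopic), which is what makes $Y$ a genuine subcomplex of $X$; the reason you offer (non-degeneracy of the stabilisation and the complete-join machinery) pertains to Theorem~\ref{thm:reduction-to-TC} rather than to this connectivity statement.
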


\begin{proof}
Assume that $n\geq 2$ (otherwise the claim is trivial). A vertex of $TLC_n(A,\Sigma)$ is a tethered level curve in $A\natural \mathfrak{L}(\Sigma)^{\natural n}$. This cuts off from $A\natural \mathfrak{L}(\Sigma)^{\natural n}$ a subsurface homeomorphic to $\mathfrak{L}(\Sigma)$ and the complementary surface is homeomorphic to $A\natural \mathfrak{L}(\Sigma)^{\natural (n-1)}$; to prove this latter fact we use the fact that $n\geq 2$ to ensure that we may ``absorb'' a finite connected sum $\Sigma \sharp \cdots \sharp \Sigma$ into one of the remaining copies of $\mathfrak{L}(\Sigma)$. The isotopy class of this tethered curve is therefore a vertex of $TC_n(A,\mathfrak{L}(\Sigma))$. This gives us a map of vertex sets
\begin{equation}
\label{eq:map-of-vertex-sets}
TLC_n(A,\Sigma)_0 \longrightarrow TC_n(A,\mathfrak{L}(\Sigma))_0,
\end{equation}
which is injective since distinct level curves are not isotopic (because we have assumed that $\Sigma \neq \bS^2$). For any $p$-simplex of $TLC_n(A,\Sigma)$ with $p\leq n-2$, the images of its vertices in $TC_n(A,\mathfrak{L}(\Sigma))_0$ span a $p$-simplex by the same argument as above for vertices, using the fact that $n-p-1\geq 1$ in order to be able to ``absorb'' a finite connected sum $\Sigma \sharp \cdots \sharp \Sigma$ into one of the remaining copies of $\mathfrak{L}(\Sigma)$. Thus we have an embedding of simplicial complexes
\begin{equation}
\label{eq:embedding-of-complexes}
TLC_n(A,\Sigma)^{(n-2)} \lhook\joinrel\longrightarrow TC_n(A,\mathfrak{L}(\Sigma)),
\end{equation}
where on the left we have the $(n-2)$-skeleton of $TLC_n(A,\Sigma)$. Note that the $(n-1)$-simplices of $TLC_n(A,\Sigma)$ are not all sent to simplices of $TC_n(A,\mathfrak{L}(\Sigma))$ under \eqref{eq:map-of-vertex-sets}, so the embedding \eqref{eq:embedding-of-complexes} does not extend to the whole complex $TLC_n(A,\Sigma)$. Nevertheless, the fact that $TLC_n(A,\Sigma)$ is $(n-2)$-connected (Proposition \ref{prop-conn-tlc}) implies that its $(n-2)$-skeleton is $(n-3)$-connected. We will now use a bad simplices argument for the inclusion
\begin{equation}
\label{eq:embedding-of-complexes-restricted}
TLC_n(A,\Sigma)^{(n-2)} \lhook\joinrel\longrightarrow TC_n(A,\mathfrak{L}(\Sigma))^{(n-2)},
\end{equation}
where we have restricted the right-hand side of \eqref{eq:embedding-of-complexes} to its $(n-2)$-skeleton, to deduce that $TC_n(A,\mathfrak{L}(\Sigma))$ is also $(n-3)$-connected.

We say a simplex in $TC_n(A,\mathfrak{L}(\Sigma))^{(n-2)}$ is \emph{bad} if none of its vertices lie in $TLC_n(A,\Sigma)^{(n-2)}$. This means, equivalently, that none of its vertices, which are (isotopy classes of) tethered curves, is isotopic to a tethered \emph{level} curve. Let $\sigma$ be a bad $p$-simplex and $G_\sigma$ its good link. We need to show that $G_\sigma$ is at least $(n-p-4)$-connected. Each of the $p+1$ tethered curves of $\sigma$ cuts off from $A\natural \mathfrak{L}(\Sigma)^{\natural n}$ one homeomorphic copy of $\mathfrak{L}(\Sigma)$. Since the level end of $\mathfrak{L}(\Sigma)$ is topologically distinguished, each homeomorphic copy of $\mathfrak{L}(\Sigma)$ that is cut off contains at most one level end of $A\natural \mathfrak{L}(\Sigma)^{\natural n}$ (it may contain zero, if $A$ also contains homeomorphic copies of $\mathfrak{L}(\Sigma)$). Denote by $S_\sigma$ the complementary surface obtained after cutting off all subsurfaces bounded by the tethered curves of $\sigma$ and denote by $n'$ the number of level ends of $A\natural \mathfrak{L}(\Sigma)^{\natural n}$ contained in $S_\sigma$. By the preceding argument we know that $n' \geq n-p-1$. There is a decomposition $S_\sigma \cong A' \natural \mathfrak{L}(\Sigma)^{\natural n'}$ of based surfaces equipped with level structures, where the right-hand side has the standard level structure inherited from $\mathfrak{L}(\Sigma)$ and the level structure of $S_\sigma$ consists of all level curves of $A\natural \mathfrak{L}(\Sigma)^{\natural n}$ that are contained in $S_\sigma$. Note that $A'$ may be different from $A$ (part of $A$ may have been cut off and we may have absorbed some copies of $\Sigma$ into $A'$). Under this identification, the good link $G_\sigma$ identifies with the complex $TLC_{n'}(A',\Sigma)^{(n'-2)}$. This has connectivity at least $n'-3 \geq n-p-4$ as it is the $(n'-2)$-skeleton of $TLC_{n'}(A',\Sigma)$, which is $(n'-2)$-connected  by Proposition \ref{prop-conn-tlc}. Theorem \ref{thm--bad-sim} therefore implies that $TC_n(A,\mathfrak{L}(\Sigma))^{(n-2)}$ is $(n-3)$-connected, thus the whole complex $TC_n(A,\mathfrak{L}(\Sigma))$ is also $(n-3)$-connected.
\end{proof}

\begin{cor}
\label{thm-conn-lins-shifted}
Let $\mathfrak{L}(\Sigma)$ be a telescope, i.e.~a linear surface whose level end is topologically distinguished. Assume that $\Sigma \neq \bS^2$. Then the complex $TC_n(A\natural \mathfrak{L}(\Sigma),\mathfrak{L}(\Sigma))$ is $(n-2)$-connected.
\end{cor}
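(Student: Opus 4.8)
The plan is to observe that increasing the stabilisation index by one has exactly the same effect on the tethered curve complex as enlarging the base surface from $A$ to $A\natural\mathfrak{L}(\Sigma)$; concretely, I would show that $TC_n(A\natural\mathfrak{L}(\Sigma),\mathfrak{L}(\Sigma))$ and $TC_{n+1}(A,\mathfrak{L}(\Sigma))$ are \emph{the same} simplicial complex, and then quote Corollary~\ref{thm-conn-lins} with $n$ replaced by $n+1$.

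First I would record the bookkeeping identity. By the strict associativity of $\natural$ arranged on $\bar{\cM}$ in \S\ref{s:homogeneous-for-inf-surfaces} (or, without appealing to strictness, by the canonical homeomorphism of decorated surfaces, under which $TC_n(-,-)$ is plainly invariant), we have an equality of decorated surfaces
\[
(A\natural\mathfrak{L}(\Sigma))\natural\mathfrak{L}(\Sigma)^{\natural n}=A\natural\mathfrak{L}(\Sigma)^{\natural(n+1)},
\]
compatibly with basepoints and boundary decorations; moreover $(A\natural\mathfrak{L}(\Sigma))\natural\mathfrak{L}(\Sigma)^{\natural(n-p-1)}=A\natural\mathfrak{L}(\Sigma)^{\natural(n-p)}$ for every $p$. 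Unwinding Definition~\ref{defn-teth-X-cur}: a vertex of either complex is the isotopy class of a tethered curve in $A\natural\mathfrak{L}(\Sigma)^{\natural(n+1)}$ whose curve bounds a copy of $\mathfrak{L}(\Sigma)$ and whose complement, after cutting along the tether and removing that copy, is homeomorphic to $A\natural\mathfrak{L}(\Sigma)^{\natural n}$; and a $(p+1)$-tuple of such vertices spans a $p$-simplex precisely when the complement of the $p+1$ subsurfaces they cut off is homeomorphic to $A\natural\mathfrak{L}(\Sigma)^{\natural(n-p)}$. These are verbatim the same vertex and simplex conditions for the two complexes, so $TC_n(A\natural\mathfrak{L}(\Sigma),\mathfrak{L}(\Sigma))=TC_{n+1}(A,\mathfrak{L}(\Sigma))$.

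Then I would simply invoke Corollary~\ref{thm-conn-lins}: its hypotheses ($\mathfrak{L}(\Sigma)$ a telescope and $\Sigma\neq\bS^2$) are exactly those in force here, so $TC_{n+1}(A,\mathfrak{L}(\Sigma))$ is $((n+1)-3)$-connected, i.e.\ $(n-2)$-connected; by the identification above so is $TC_n(A\natural\mathfrak{L}(\Sigma),\mathfrak{L}(\Sigma))$.

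I do not anticipate a genuine obstacle: all the substantive connectivity input (the level curve complex, the fibering argument of Hatcher--Vogtmann, and the two bad-simplex arguments) has already been carried out in the proof of Corollary~\ref{thm-conn-lins}. The only point requiring a little care is the decorated-surface bookkeeping — checking that ``absorbing'' the extra $\mathfrak{L}(\Sigma)$ summand into the base respects the parametrised boundary interval $I$ — and this is precisely what the strict monoidal structure set up in \S\ref{s:homogeneous-for-inf-surfaces} guarantees.
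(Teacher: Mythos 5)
Your approach is the paper's approach, but the key identification is misstated, and as literally written it fails: $TC_n(A\natural\mathfrak{L}(\Sigma),\mathfrak{L}(\Sigma))$ and $TC_{n+1}(A,\mathfrak{L}(\Sigma))$ are \emph{not} the same simplicial complex. Your bookkeeping of the complement conditions is correct only in the range $p\leq n-1$: there the condition ``complement $\cong (A\natural\mathfrak{L}(\Sigma))\natural\mathfrak{L}(\Sigma)^{\natural(n-p-1)}$'' agrees with ``complement $\cong A\natural\mathfrak{L}(\Sigma)^{\natural(n-p)}$''. But $TC_{n+1}(A,\mathfrak{L}(\Sigma))$ also has $n$-simplices, namely $(n+1)$-tuples of pairwise disjoint tethered $\mathfrak{L}(\Sigma)$-curves whose complement is homeomorphic to $A$ itself (e.g.\ the tethered standard copies of $\mathfrak{L}(\Sigma)$ in $A\natural\mathfrak{L}(\Sigma)^{\natural(n+1)}$), and these have no counterpart in $TC_n(A\natural\mathfrak{L}(\Sigma),\mathfrak{L}(\Sigma))$, which by Definition~\ref{defn-teth-X-cur} only has simplices of dimension at most $n-1$ (for $p\geq n$ the required complement $(A\natural\mathfrak{L}(\Sigma))\natural\mathfrak{L}(\Sigma)^{\natural(n-p-1)}$ does not exist). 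So the two complexes even have different dimensions, and the asserted equality is false.

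The correct statement --- and the one the paper uses --- is that $TC_n(A\natural\mathfrak{L}(\Sigma),\mathfrak{L}(\Sigma))$ is isomorphic to the $(n-1)$-skeleton $TC_{n+1}(A,\mathfrak{L}(\Sigma))^{(n-1)}$, which is exactly the part of your verification that is valid. The conclusion then still follows, but you need one extra (standard) step that your write-up omits: since the inclusion of the $(n-1)$-skeleton into $TC_{n+1}(A,\mathfrak{L}(\Sigma))$ induces isomorphisms on $\pi_i$ for $i\leq n-2$, and $TC_{n+1}(A,\mathfrak{L}(\Sigma))$ is $(n-2)$-connected by Corollary~\ref{thm-conn-lins}, the skeleton --- hence $TC_n(A\natural\mathfrak{L}(\Sigma),\mathfrak{L}(\Sigma))$ --- is $(n-2)$-connected as well. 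With that repair your argument coincides with the paper's proof; note the same skeleton (rather than equality) phenomenon recurs in the binary-tree case (Theorem~\ref{thm-conn-btree-shifted}), so it is worth being careful about it.
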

\begin{proof}
This follows immediately from Corollary \ref{thm-conn-lins} and the observation that $TC_n(A\natural \mathfrak{L}(\Sigma),\mathfrak{L}(\Sigma))$ is isomorphic to the $(n-1)$-skeleton of $TC_{n+1}(A,\mathfrak{L}(\Sigma))$.
\end{proof}

\subsection{Stabilisation by binary tree surfaces.}
\label{stab:sbs}

In this subsection, we prove that the tethered $X$-curve complex $TC_n(A,X)$ is highly-connected for $1$-holed binary tree surfaces $X = \mathfrak{B}(\Sigma)$.

\begin{defn}
Let $\Sigma$ be a connected surface with empty boundary, possibly of infinite type. Recall from Definition \ref{def:linear-binary-tree-surfaces} the \emph{binary tree surface} $\mathrm{Gr}_\mathfrak{B}(\Sigma)$; we will consider its $1$-holed version $\mathfrak{B}(\Sigma) = \mathrm{Gr}_\mathfrak{B}(\Sigma)_\circ$ (see Notation \ref{notation:one-holed-tree-surfaces}). We note that the surface $\mathfrak{B}(\Sigma)$ may be realised as the union of a family of subsurfaces $\{C_n\}_{n\geq 0}$ constructed as follows:
\begin{itemizeb}
\item $C_0$ is a cylinder with a basepoint in one boundary component. We refer to this boundary as its \emph{based boundary};
\item $C_n$ is obtained from $C_{n-1}$ by attaching a $3$-holed copy of $\Sigma$ along each non-based boundary component of $C_{n-1}$. The basepoint and based boundary of $C_n$ are inherited from $C_{n-1}$. Note that $C_n$ has $2^n$ non-based boundary components.
\end{itemizeb}
We refer to the non-based boundary components of the $C_n$ as the \emph{level curves} of $\mathfrak{B}(\Sigma)$. The ends of $\mathfrak{B}(\Sigma)$ accumulated by the level curves form a Cantor set in the end space of $\mathfrak{B}(\Sigma)$. We refer to these ends as the \emph{level ends} of $\mathfrak{B}(\Sigma)$. If $E$ denotes the space of ends of $\Sigma$, then the space of ends of $\mathfrak{B}(\Sigma)$ is the Cantor compactification $(E\omega)^\cC$ (see Definition \ref{defn:Cantor-compactification}) of the disjoint union $E\omega$ of countably infinitely many copies of $E$; the level ends correspond to the Cantor set at infinity in $(E\omega)^\cC$.
\end{defn}

Now for any decorated surface $A$ and $n\geq 1$, the surface $A\natural \mathfrak{B}(\Sigma)^{\natural n}$ inherits a level structure from the $n$ copies of $\mathfrak{B}(\Sigma)$; see Figure \ref{fig:Bn} for an example. We define a binary-tree analogue of the tethered level curve complex of \S\ref{stab:sls} (Definition \ref{defn:TLC}).

\begin{figure}[tb]
    \centering
    \includegraphics[scale=0.7]{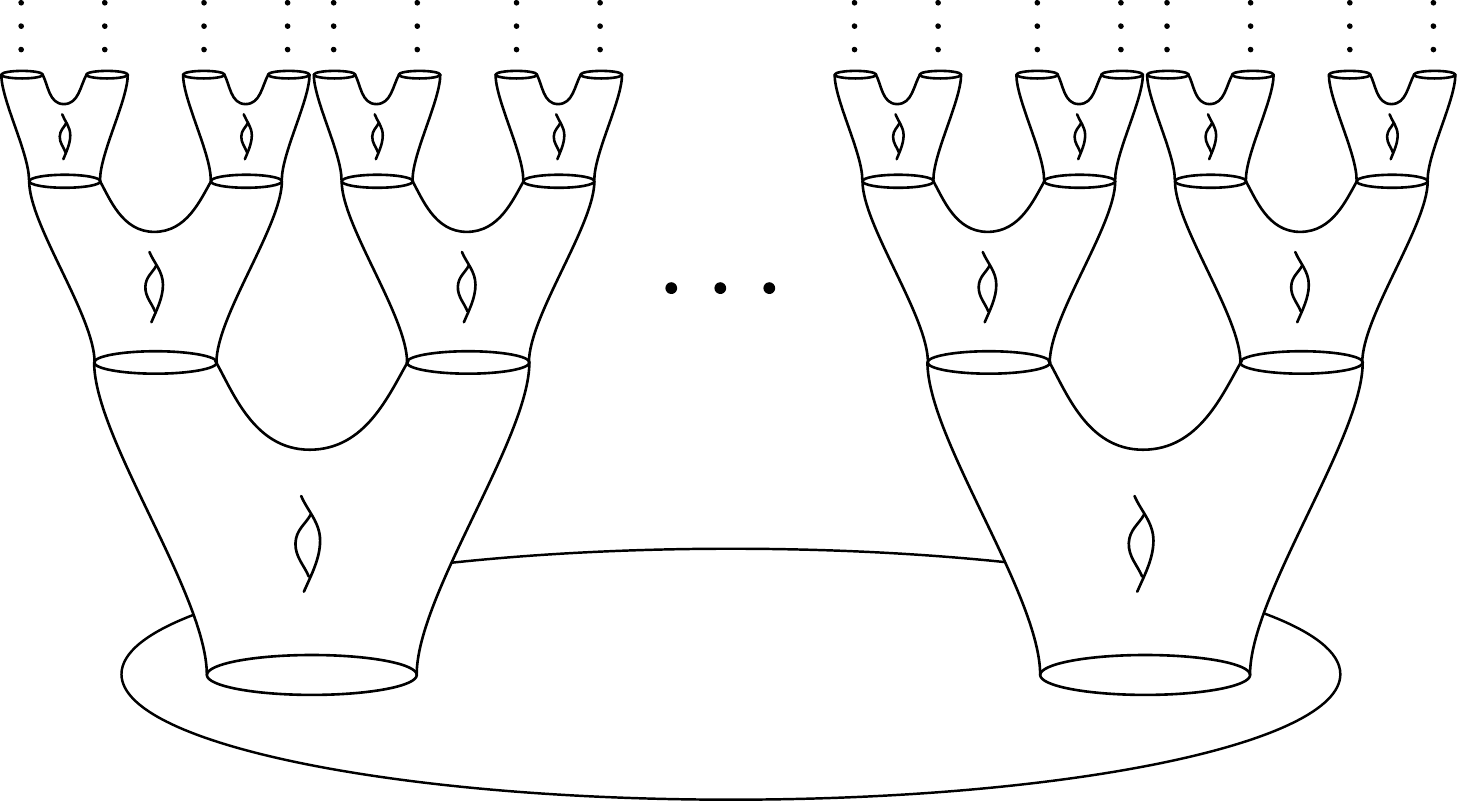}
    \caption{The level structure of $D^2\natural \mathfrak{B}(T^2)^{\natural n}$, inherited from that of $\mathfrak{B}(T^2)$.}
    \label{fig:Bn}
\end{figure}

\begin{defn}
The \emph{binary tethered level curve complex} $bTLC_n(A,\Sigma)$ is the simplicial complex with:
\begin{enumerate}
\item vertices: isotopy classes of tethered level curves in $A\natural \mathfrak{B}(\Sigma)^{\natural n}$;
\item $p$-simplices: a $(p+1)$-tuple of isotopy classes of tethered level curves spans a $p$-simplex if there exist representatives that are pairwise disjoint outside the basepoint and:
  \begin{itemizeb}
  \item (if $p<n-1$): some level ends of $A\natural \mathfrak{B}(\Sigma)^{\natural n}$ lie outside of the subsurfaces bounded by the $p+1$ tethered level curves;
  \item (if $p=n-1$): all level ends of $A\natural \mathfrak{B}(\Sigma)^{\natural n}$ lie inside the subsurfaces bounded by the $p+1$ tethered level curves;
  \end{itemizeb}
\item there are no $p$-simplices for $p\geq n$.
\end{enumerate}
\end{defn}

As in \cite[\S 2]{SW19} and  \cite[\S 4]{SkipperWu2021}, we define an intermediate complex which helps us to determine the connectivity of $bTLC_n(A,\Sigma)$.

\begin{defn}
The simplicial complex $bTLC_n^\infty(A,\Sigma)$ has:
\begin{enumerate}
\item vertices: isotopy classes of tethered level curves in $A\natural \mathfrak{B}(\Sigma)^{\natural n}$;
\item $p$-simplices: a $(p+1)$-tuple of isotopy classes of tethered level curves spans a $p$-simplex if there exist representatives that are pairwise disjoint outside the basepoint and some level ends of $A\natural \mathfrak{B}(\Sigma)^{\natural n}$ lie outside of the subsurfaces bounded by the $p+1$ tethered level curves.
\end{enumerate}
\end{defn}

\begin{rem}
\label{rem:same-finite-skeleton-TLC}
The definitions of $bTLC_n^\infty(A,\Sigma)$ and $bTLC_n(A,\Sigma)$ are identical for $p$-simplices with $p\leq n-2$, so they have the same $(n-2)$-skeleton. On the other hand $bTLC_n(A,\Sigma)$ is $(n-1)$-dimensional whereas $bTLC_n^\infty(A,\Sigma)$ is infinite-dimensional. The complex $bTLC_n^\infty(A,\Sigma)$ is therefore obtained from $bTLC_n(A,\Sigma)$ by throwing away its top-dimensional simplices and adding new $p$-simplices for all $p\geq n-1$.
\end{rem}

\begin{prop}
\label{prop-TLCn-inf-cont}
Let $\Sigma$ be any connected surface without boundary and let $A$ be any decorated surface. Then the simplicial complex $bTLC_n^\infty(A,\Sigma)$ is contractible for any $n\geq 1$.
\end{prop}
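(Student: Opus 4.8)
The plan is to show that $bTLC_n^\infty(A,\Sigma)$ is contractible by the standard technique for ``$\infty$''-versions of curve complexes (cf.~\cite[\S 2]{SW19}, \cite[\S 4]{SkipperWu2021}): exhibit an element $v_0$ (a fixed vertex, i.e.\ a fixed isotopy class of tethered level curve) that is ``absorbing'', in the sense that for \emph{every} simplex $\sigma$ of $bTLC_n^\infty(A,\Sigma)$ there is a vertex $w$ with $v_0 \ast \sigma \ast w$ a simplex (or directly that $v_0 \ast \sigma$ is a simplex), so that the complex is a cone. The crucial point making this possible here, and the reason one works with $bTLC_n^\infty$ rather than $bTLC_n$, is that there is no upper bound on the dimension of simplices: the simplex-condition only requires that \emph{some} level end lies outside the union of the subsurfaces bounded by the tethered level curves involved, which can always be arranged by choosing $v_0$ to cut off a subsurface missing ``most'' of the Cantor set of level ends.

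More concretely, first I would fix a vertex $v_0 \in bTLC_n^\infty(A,\Sigma)$: choose a tethered level curve whose curve is one of the level curves of $A\natural \mathfrak{B}(\Sigma)^{\natural n}$ sitting very ``close to the based boundary'', say a level curve of $C_0 \cup (\text{first branching of one copy of }\mathfrak{B}(\Sigma))$, so that the subsurface it bounds is a copy of $\mathfrak{B}(\Sigma)$ and the level ends \emph{not} contained in this subsurface form a nonempty (indeed infinite) set --- here one uses $n\geq 1$ so that at least one copy of $\mathfrak{B}(\Sigma)$ is present and the binary branching guarantees that a level curve near the root leaves an entire sub-Cantor-set of level ends on the outside. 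The heart of the argument is then the following claim: for any simplex $\tau = [\alpha_0],\dots,[\alpha_p]$ of $bTLC_n^\infty(A,\Sigma)$, the isotopy class $[v_0]$ together with $\tau$ spans a simplex. To see this, realise $\tau$ by pairwise-disjoint (off the basepoint) representatives $\alpha_0,\dots,\alpha_p$; by definition of $bTLC_n^\infty$ there is a level end $e$ of $A\natural\mathfrak{B}(\Sigma)^{\natural n}$ outside all the subsurfaces bounded by the $\alpha_i$. Now isotope the representative of $v_0$ so that its tether runs in a thin collar of the based boundary and then out to a level curve cutting off a \emph{small} neighbourhood of $e$ disjoint from all the $\alpha_i$ and their bounded subsurfaces; this is possible because the complement of $\bigcup_i(\text{subsurface bounded by }\alpha_i)$ is an open subsurface containing the end $e$, hence contains a level curve arbitrarily close to $e$, and one can route a disjoint tether to it along the based boundary collar (which meets none of the $\alpha_i$ except at the basepoint). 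After this isotopy, $v_0$ is disjoint from $\tau$ off the basepoint, and \emph{some} level end (e.g.\ any level end outside both the neighbourhood of $e$ just used and the subsurfaces of $\tau$, of which there are still plenty, again by the Cantor-set structure of the level ends and the binary branching) lies outside all $p+2$ subsurfaces. Hence $[v_0]\ast\tau$ is a simplex, so $bTLC_n^\infty(A,\Sigma)$ is a cone with cone point $[v_0]$, therefore contractible.

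The main obstacle I anticipate is the careful verification of the disjointness and ``leftover level end'' conditions during the isotopy: one must make genuinely precise the claim that, given finitely many pairwise-disjoint tethered level curves, one can always push a new tethered level curve off all of them while keeping (i) its curve a bona fide level curve of $A\natural\mathfrak{B}(\Sigma)^{\natural n}$, and (ii) at least one level end of the ambient surface outside the enlarged family. Step (i) requires knowing that the complement of the union of the bounded subsurfaces still contains level curves of the ambient surface cutting off arbitrarily small neighbourhoods of any level end in that complement --- which follows from the explicit construction of $\mathfrak{B}(\Sigma)$ as $\bigcup_n C_n$ and the fact that the level ends form a Cantor set, so every neighbourhood of a level end contains a level curve. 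Step (ii) is where the \emph{binary} (as opposed to \emph{linear}) nature of the tree is essential: cutting off finitely many copies of $\mathfrak{B}(\Sigma)$ removes only finitely many ``branches'', leaving an uncountable set of level ends behind, so there is never an obstruction. I would also remark (as in Remark \ref{rem:same-finite-skeleton-TLC}) that this contractibility statement, combined with the fact that $bTLC_n$ and $bTLC_n^\infty$ share their $(n-2)$-skeleton, is exactly what will later feed the bad-simplices argument computing the connectivity of $bTLC_n(A,\Sigma)$ and ultimately of $TC_n(A,\mathfrak{B}(\Sigma))$.
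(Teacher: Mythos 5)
Your overall plan — exploit the binary branching so that, after cutting off finitely many subsurfaces, a Cantor set of level ends remains, and cone things off towards the survivors — is the right intuition, and it is what makes $bTLC_n^\infty$ (rather than $bTLC_n$) the tractable object. However, the execution contains a genuine gap: you are not exhibiting a cone point.

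You fix a vertex $[v_0]$ whose level curve sits near the root, and then, given a simplex $\tau$ with a leftover level end $e$, you ``isotope the representative of $v_0$ so that its tether runs in a thin collar of the based boundary and then out to a level curve cutting off a small neighbourhood of $e$.'' But replacing the level curve near the root by a level curve near $e$ is in general \emph{not} an isotopy of $v_0$; it produces a different vertex of the complex. This is explicit when $\Sigma \neq \bS^2$: already in the linear case the paper notes in the proof of Corollary~\ref{thm-conn-lins} that distinct level curves of $\mathfrak{L}(\Sigma)$ are not isotopic, and in the binary case, even when the underlying unbased curves happen to be isotopic, the \emph{tethered} level curves carry the additional tether data, and a fixed $[v_0]$ with a fixed tether class need not have a representative disjoint from the tethers of an arbitrary $\tau$ (tethers can wind essentially). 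So the final claim, ``$bTLC_n^\infty(A,\Sigma)$ is a cone with cone point $[v_0]$,'' is not established — the ``cone point'' you actually use depends on $\tau$.

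The standard repair, and what the proofs in \cite{SW19} and \cite{SkipperWu2021} (which the paper cites and invokes verbatim) actually do, is weaker in its conclusion but strong enough: one does not find a global cone point, one cones off each finite piece separately. Given a map $f \colon S^k \to bTLC_n^\infty(A,\Sigma)$, its image lies in a finite subcomplex $K$, which involves finitely many tethered level curves; the Cantor set of level ends not captured by any of them is non-empty (this is exactly your step (ii), the point where binary branching is used), so one may \emph{choose} a tethered level curve $v_K$ attached to a level curve deep inside this surviving region, with a tether routed to avoid the finitely many tethers appearing in $K$ (this is a finite disjointness problem on arcs, solvable by the usual surgery/minimal-position arguments, i.e.\ your step (i)). Then $v_K \ast \sigma$ is a simplex for every simplex $\sigma$ of $K$, because the leftover-level-end condition is still satisfied after adding $v_K$ (again by the Cantor structure). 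Hence $K$ is contained in the cone $v_K \ast K$ inside $bTLC_n^\infty(A,\Sigma)$, so $f$ is null-homotopic. Since $k$ was arbitrary, the complex is weakly contractible, hence contractible. The crucial conceptual change from your proposal is that $v_K$ must be allowed to depend on $K$; nothing forces a single vertex to work universally, and the proposition does not require it.
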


\begin{proof}
The proof is a verbatim adaptation of the proof of \cite[Proposition~4.26]{SkipperWu2021}, which in turn follows a similar strategy to the proof of \cite[Proposition~3.1]{SW19}. In fact, the special case $(A = \bD^2 , \Sigma = \bS^2)$ coincides precisely with the $d=2$ case of \cite[Proposition~4.26]{SkipperWu2021} (allowing $d\geq 2$ would correspond to replacing the infinite binary tree $\mathfrak{B}$ with the infinite $d$-ary tree). The proof carries over identically for the general case of $(A,\Sigma)$.
\end{proof}

It follows by Remark \ref{rem:same-finite-skeleton-TLC} that $bTLC_n(A,\Sigma)$ is $(n-3)$-connected. However, instead of using this fact, we will go the other way (anticlockwise) around the square in the bottom-right of Figure \ref{fig:structure-of-proof} outlining the strategy of proof. To do this, we will pass via the following complex, which builds a bridge between $bTLC_n^\infty(A,\Sigma)$ and $TC_n(A,\mathfrak{B}(\Sigma))$.

\begin{defn}
We define the simplicial complex $TC_n^\infty(A,\mathfrak{B}(\Sigma))$ by:
\begin{enumerate}
\item vertices: isotopy classes of tethered curves $\alpha \colon L \hookrightarrow A\natural \mathfrak{B}(\Sigma)^{\natural n}$ such that $\alpha([1,2])$ bounds a subsurface homeomorphic to $\mathfrak{B}(\Sigma)$ and some of the level ends of $A\natural \mathfrak{B}(\Sigma)^{\natural n}$ lie outside of this subsurface.
\item $p$-simplices: a $(p+1)$-tuple of vertices spans a $p$-simplex if there exist representatives that are pairwise disjoint except at the basepoint and some of the level ends of $A\natural \mathfrak{B}(\Sigma)^{\natural n}$ lie outside of the subsurfaces bounded by the $p+1$ curves.
\end{enumerate}
\end{defn}

\begin{lem}
\label{lem:TC-inf-share-skl}
The $(n-2)$-skeletons of $TC_n(A,\mathfrak{B}(\Sigma))$ and of $TC_n^\infty(A,\mathfrak{B}(\Sigma))$ are isomorphic.
\end{lem}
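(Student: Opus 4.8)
The plan is to exhibit a bijection between the vertices of $TC_n(A,\mathfrak{B}(\Sigma))$ and those of $TC_n^\infty(A,\mathfrak{B}(\Sigma))$ and then check that this bijection carries the $(n-2)$-skeleton of one isomorphically onto that of the other. First I would observe that a vertex of $TC_n^\infty(A,\mathfrak{B}(\Sigma))$ is a tethered curve $\alpha$ whose curve part bounds a subsurface homeomorphic to $\mathfrak{B}(\Sigma)$ and such that \emph{some} level end lies outside this subsurface. The key point is that, by the self-similarity $\mathfrak{B}(\Sigma)\natural\mathfrak{B}(\Sigma)\cong\mathfrak{B}(\Sigma)$ (and, by iteration, $\mathfrak{B}(\Sigma)^{\natural n}\cong\mathfrak{B}(\Sigma)$), the complement of the subsurface bounded by $\alpha([1,2])$ after cutting along the tether $\alpha([0,1])$ is automatically homeomorphic to $A\natural\mathfrak{B}(\Sigma)^{\natural(n-1)}$ provided that at least one level end of $A\natural\mathfrak{B}(\Sigma)^{\natural n}$ remains in the complement: indeed the complement then contains at least one whole copy of $\mathfrak{B}(\Sigma)$ into which the remaining finitely many extra copies of $\Sigma$ (or any finite discrepancy in the end structure) can be absorbed, since $\mathfrak{B}(\Sigma)$ absorbs any finite boundary connected sum with $\Sigma$-pieces. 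Hence the "some level end outside" condition is exactly the condition ensuring that $\alpha$ is a tethered $\mathfrak{B}(\Sigma)$-curve in the sense of the definition preceding Definition \ref{defn-teth-X-cur}, so the vertex sets of $TC_n(A,\mathfrak{B}(\Sigma))$ and $TC_n^\infty(A,\mathfrak{B}(\Sigma))$ coincide. (Strictly, every vertex of $TC_n(A,\mathfrak{B}(\Sigma))$ automatically leaves some level end in its complement, since the complement is homeomorphic to $A\natural\mathfrak{B}(\Sigma)^{\natural(n-1)}$ with $n\geq 1$, which contains level ends.)

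Next I would treat the $p$-simplices for $p\leq n-2$. In $TC_n(A,\mathfrak{B}(\Sigma))$, a $(p+1)$-tuple of disjointly representable tethered curves spans a $p$-simplex iff the complement of the $p+1$ bounded subsurfaces is homeomorphic to $A\natural\mathfrak{B}(\Sigma)^{\natural(n-p-1)}$. In $TC_n^\infty(A,\mathfrak{B}(\Sigma))$, the condition is instead that some level end lies outside all $p+1$ subsurfaces. The plan is to show these two conditions are equivalent when $p\leq n-2$, again via the self-similarity/absorption argument: if the complement retains at least one level end then it contains a full copy of $\mathfrak{B}(\Sigma)$, and the finite discrepancy in the remaining end structure (coming from having cut off $p+1$ copies of $\mathfrak{B}(\Sigma)$, each possibly carrying finitely many stray $\Sigma$-summands relative to the "standard" decomposition) is absorbed into that copy, so the complement is homeomorphic to $A\natural\mathfrak{B}(\Sigma)^{\natural(n-p-1)}$ — note $n-p-1\geq 1$ here, which is exactly why we need $p\leq n-2$. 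Conversely, if the complement is $A\natural\mathfrak{B}(\Sigma)^{\natural(n-p-1)}$ with $n-p-1\geq 1$, it certainly contains a level end. This establishes that the identity on vertices sends $p$-simplices to $p$-simplices in both directions for $p\leq n-2$, giving the claimed isomorphism of $(n-2)$-skeletons. I would also note that for $p=n-1$ the two complexes genuinely differ (in $TC_n$ the complement must be homeomorphic to $A$, a closed condition, whereas in $TC_n^\infty$ one merely asks that some level end remains outside, and moreover $TC_n^\infty$ has simplices in all dimensions), which is why the statement is restricted to the $(n-2)$-skeleton.

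The main obstacle is the absorption argument: making precise that cutting off $p+1$ disjoint tethered $\mathfrak{B}(\Sigma)$-curves from $A\natural\mathfrak{B}(\Sigma)^{\natural n}$, subject to "some level end remains", always yields a complement homeomorphic to $A\natural\mathfrak{B}(\Sigma)^{\natural(n-p-1)}$. This requires the classification of surfaces (Richards' theorem, recalled in \S\ref{s:inf-type}): one checks genus, number of boundary components, and the homeomorphism type of the pair $(\Ends,\Ends_{np})$. Genus and boundary count are immediate; for the end space one uses that the end space of $\mathfrak{B}(\Sigma)$ is the Cantor compactification $(E\omega)^\cC$, that such a Cantor compactification is unchanged (up to homeomorphism of pairs) by adjoining finitely many extra copies of $E$ to the "$\omega$" part, and that a complement retaining a level end contains a clopen neighbourhood homeomorphic to a whole copy of $\mathfrak{B}(\Sigma)$'s end space into which the rest can be merged. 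I expect this to be a short but slightly delicate point-set topology argument with the end spaces; the rest of the proof is formal bookkeeping once the vertex and simplex conditions are matched up.
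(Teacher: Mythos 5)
Your proposal is correct and takes essentially the same route as the paper: match vertex sets, then match $p$-simplex conditions for $p\leq n-2$, using the self-similarity $\mathfrak{B}(\Sigma)^{\natural m}\cong\mathfrak{B}(\Sigma)$ together with the fact that the set of level ends remaining in the complement is a clopen subspace of a Cantor set, hence either empty or itself a Cantor set. The paper states this last point directly (rather than phrasing it as an ``absorption'' argument), which is the clean way to close the gap you flag as the main obstacle, but you identify the same clopen-in-Cantor observation in your final paragraph, so there is no substantive difference.
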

\begin{proof}
Suppose that $n\geq 2$ (otherwise the statement is vacuous). Vertices of $TC_n(A,\mathfrak{B}(\Sigma))$ are tethered $\mathfrak{B}(\Sigma)$-curves, meaning tethered curves that cut off a subsurface homeomorphic to $\mathfrak{B}(\Sigma)$ and where the complementary surface is homeomorphic to $A\natural \mathfrak{B}(\Sigma)^{\natural (n-1)}$. Since $\mathfrak{B}(\Sigma)^{\natural m} \cong \mathfrak{B}(\Sigma)$ for all $m\geq 1$, the last part of this condition is equivalent to saying that the complementary surface is homeomorphic to $A\natural \mathfrak{B}(\Sigma)$. This, in turn, is equivalent to saying that the space of level ends of $A \natural \mathfrak{B}(\Sigma)^{\natural n}$ that are not bounded by the tethered curve is homeomorphic to the Cantor set. Now the space of level ends of $A \natural \mathfrak{B}(\Sigma)^{\natural n}$ that are not bounded by the tethered curve is always a clopen subspace of the space of all level ends, which is a Cantor set. All clopen subspaces of the Cantor set are either empty or homeomorphic to the Cantor set itself. Thus the condition is equivalent to saying that the space of level ends of $A \natural \mathfrak{B}(\Sigma)^{\natural n}$ that are not bounded by the tethered curve is non-empty. This is precisely the condition for a tethered curve that cuts off a subsurface homeomorphic to $\mathfrak{B}(\Sigma)$ to be a vertex of $TC_n^\infty(A,\mathfrak{B}(\Sigma))$. Hence $TC_n(A,\mathfrak{B}(\Sigma))$ and $TC_n^\infty(A,\mathfrak{B}(\Sigma))$ have the same set of vertices.

An identical argument shows that the $p$-simplices of $TC_n(A,\mathfrak{B}(\Sigma))$ and of $TC_n^\infty(A,\mathfrak{B}(\Sigma))$ are the same for $p\leq n-2$, since $\mathfrak{B}(\Sigma)^{\natural (n-p-1)} \cong \mathfrak{B}(\Sigma)$.
\end{proof}

\begin{prop}
\label{prop:TC-inf-contr}
Let $\Sigma$ be any connected surface without boundary and let $A$ be any decorated surface. Then the simplicial complex $TC_n^\infty(A, \mathfrak{B}(\Sigma))$ is contractible for any $n\geq 1$.
\end{prop}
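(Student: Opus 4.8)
The plan is to run a bad-simplex argument comparing $TC_n^\infty(A,\mathfrak{B}(\Sigma))$ with the subcomplex of tethered \emph{level} curves, which has already been shown to be contractible. First I would observe that every tethered level curve in $A\natural\mathfrak{B}(\Sigma)^{\natural n}$ cuts off a subsurface homeomorphic to $\mathfrak{B}(\Sigma)$, and that the condition imposed on simplices of $bTLC_n^\infty(A,\Sigma)$ — that some level ends lie outside the union of the bounded subsurfaces — is literally the same condition appearing in the definition of $TC_n^\infty(A,\mathfrak{B}(\Sigma))$. Hence there is an inclusion of simplicial complexes
\[
bTLC_n^\infty(A,\Sigma) \lhook\joinrel\longrightarrow TC_n^\infty(A,\mathfrak{B}(\Sigma)),
\]
injectivity on isotopy classes being clear since a tethered curve isotopic to a tethered level curve is a tethered level curve up to isotopy. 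By Proposition \ref{prop-TLCn-inf-cont} the source is contractible, so it suffices, by part (2) of Theorem \ref{thm--bad-sim} applied for every $n$, to exhibit a collection of bad simplices in the target whose good links are all highly connected (in fact contractible).

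Next I would declare a simplex of $TC_n^\infty(A,\mathfrak{B}(\Sigma))$ to be \emph{bad} if none of its vertices is isotopic to a tethered level curve. Conditions \eqref{bad-sim-1} and \eqref{bad-sim-2} of \S\ref{sub-bad-sim} are then immediate: a simplex with no bad faces has every vertex represented by a tethered level curve and therefore (again because the simplex conditions of $bTLC_n^\infty$ and $TC_n^\infty$ coincide) lies in $bTLC_n^\infty(A,\Sigma)$; and the join of two all-non-level vertex sets is again all-non-level. Unwinding the definition of good link, a simplex $\tau\in\Lk(\sigma)$ is good for a bad simplex $\sigma$ precisely when every vertex of $\tau$ is a tethered level curve, so $G_\sigma$ is the full subcomplex of $\Lk(\sigma)$ spanned by the tethered level curves disjoint from $\sigma$.

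It then remains to identify $G_\sigma$ for a bad $p$-simplex $\sigma$. Cutting $A\natural\mathfrak{B}(\Sigma)^{\natural n}$ along the $p+1$ tethered curves representing $\sigma$ and discarding the subsurfaces they bound leaves a connected based surface $S_\sigma$ containing the basepoint. Since $\sigma$ is a simplex of $TC_n^\infty$, some level ends survive in $S_\sigma$, and the set of surviving level ends is a non-empty clopen subspace of the Cantor set of all level ends, hence again a Cantor set; using $\mathfrak{B}(\Sigma)^{\natural m}\cong\mathfrak{B}(\Sigma)$, it follows that $S_\sigma$, with its inherited level structure, is equivalent to $A'\natural\mathfrak{B}(\Sigma)^{\natural n'}$ for some decorated surface $A'$ and some $n'\geq 1$. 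Under this identification $G_\sigma\cong bTLC_{n'}^\infty(A',\Sigma)$, which is contractible by Proposition \ref{prop-TLCn-inf-cont}; in particular it is $(n-p-2)$-connected for every $n$. Theorem \ref{thm--bad-sim}(2) then yields that $TC_n^\infty(A,\mathfrak{B}(\Sigma))$ is $n$-connected for every $n$, i.e.\ contractible. The main obstacle I expect is precisely the bookkeeping in this last step: making rigorous that the complementary surface $S_\sigma$, together with the level structure it inherits from $A\natural\mathfrak{B}(\Sigma)^{\natural n}$, gives a complex isomorphic to some $bTLC_{n'}^\infty(A',\Sigma)$ with $n'\geq 1$. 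This is the binary-tree counterpart of the ``tame subsurface'' discussion in \S\ref{stab:sls}, and it rests on the elementary topological fact that every non-empty clopen subset of the Cantor set of level ends is again a Cantor set, so that no surviving level-end structure can degenerate.
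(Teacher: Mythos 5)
Your proof is correct and takes essentially the same approach as the paper's: the same bad-simplex comparison with the full subcomplex $bTLC_n^\infty(A,\Sigma)$, the same notion of bad simplex (all vertices non-level), and the same identification of the good link of a bad $p$-simplex with a complex of the form $bTLC_{n'}^\infty(A',\Sigma)$ via the clopen-subset-of-a-Cantor-set observation (the paper simply takes $n'=1$ directly, since $\mathfrak{B}(\Sigma)^{\natural m}\cong\mathfrak{B}(\Sigma)$).
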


\begin{proof}
Since the complex $bTLC_n^\infty(A,\Sigma)$ is isomorphic to the full subcomplex of $TC_n^\infty(A,\mathfrak{B}(\Sigma))$ spanned by all vertices (tethered curves) that are isotopic to tethered \emph{level} curves, we may use a bad simplices argument to deduce contractibility of $TC_n^\infty(A, \mathfrak{B}(\Sigma))$ from the contractibility of $bTLC_n^\infty(A,\Sigma)$ (Proposition \ref{prop-TLCn-inf-cont}).

Let us call a vertex of $TC_n^\infty(A,\mathfrak{B}(\Sigma))$ \emph{bad} if it does not lie in $bTLC_n^\infty(A,\Sigma)$, i.e.~its curve is not isotopic to a level curve, and we call a simplex \emph{bad} if all of its vertices are bad. Let $\sigma$ be a bad $p$-simplex and consider its good link $G_\sigma$. It will suffice to show that $G_\sigma$ is contractible, since Proposition \ref{prop-bad-sim} will then imply that the inclusion is a weak equivalence. Denote by $C_\sigma$ the complementary surface obtained from $A\natural \mathfrak{B}(\Sigma)^{\natural n}$ by cutting off all subsurfaces bounded by the tethered curves of $\sigma$. By definition of the simplices of $TC_n^\infty(A, \mathfrak{B}(\Sigma))$, some of the level ends of $A\natural \mathfrak{B}(\Sigma)^{\natural n}$ lie in $C_\sigma$. As in the previous proof, the collection of level ends lying in $C_\sigma$ is homeomorphic to a clopen subset of the Cantor set, hence homeomorphic to the Cantor set. Thus we may identify $C_\sigma \cong A' \natural \mathfrak{B}(\Sigma)$ as based surfaces equipped with level curves (the level curves in $C_\sigma$ being those inherited from $A\natural \mathfrak{B}(\Sigma)^{\natural n}$). The good link $G_\sigma$ then identifies with the complex $bTLC_1^\infty(A',\Sigma)$, which is contractible by Proposition \ref{prop-TLCn-inf-cont}.
\end{proof}

Combining Proposition \ref{prop:TC-inf-contr} and Lemma \ref{lem:TC-inf-share-skl}, we immediately deduce:

\begin{thm}
\label{thm-conn-btree}
Let $\Sigma$ be any connected surface without boundary and let $A$ be any decorated surface. Then the simplicial complex $TC_n(A,\mathfrak{B}(\Sigma))$ is $(n-3)$-connected.
\end{thm}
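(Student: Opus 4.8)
The final statement, Theorem \ref{thm-conn-btree}, asserts that $TC_n(A,\mathfrak{B}(\Sigma))$ is $(n-3)$-connected, and the excerpt explicitly says it is an immediate consequence of Proposition \ref{prop:TC-inf-contr} and Lemma \ref{lem:TC-inf-share-skl}. So the plan is very short: combine the two preceding results.

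\textbf{The plan.} First I would recall from Proposition \ref{prop:TC-inf-contr} that the auxiliary complex $TC_n^\infty(A,\mathfrak{B}(\Sigma))$ is contractible, hence in particular $(n-2)$-connected. Next I would invoke Lemma \ref{lem:TC-inf-share-skl}, which says that the $(n-2)$-skeletons of $TC_n(A,\mathfrak{B}(\Sigma))$ and $TC_n^\infty(A,\mathfrak{B}(\Sigma))$ are isomorphic as simplicial complexes. The only remaining point is the general fact that the connectivity of a simplicial complex in degrees $\leq m$ depends only on its $(m+1)$-skeleton: a map $S^k \to Y$ with $k \le m$, together with a nullhomotopy, can be taken to land in the $(k+1)$-skeleton of $Y$ after simplicial approximation (so $\pi_k(Y^{(m+1)}) \to \pi_k(Y)$ is an isomorphism for $k \le m$, where $Y^{(j)}$ denotes the $j$-skeleton). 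Applying this with $m = n-3$: since $TC_n^\infty(A,\mathfrak{B}(\Sigma))$ is $(n-2)$-connected, its $(n-2)$-skeleton is $(n-3)$-connected; this skeleton equals the $(n-2)$-skeleton of $TC_n(A,\mathfrak{B}(\Sigma))$ by Lemma \ref{lem:TC-inf-share-skl}; and therefore $TC_n(A,\mathfrak{B}(\Sigma))$ itself is $(n-3)$-connected. For the degenerate low values $n = 1, 2$ the statement is vacuous (``$(-2)$-connected'' and ``$(-1)$-connected'' impose no condition beyond non-emptiness, and the complex is non-empty since tethered $\mathfrak{B}(\Sigma)$-curves exist), matching the hypothesis $n \geq 2$ used in Lemma \ref{lem:TC-inf-share-skl}.

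\textbf{The main obstacle.} Honestly, there is essentially no obstacle here; all the work has already been done in Proposition \ref{prop-TLCn-inf-cont}, Proposition \ref{prop:TC-inf-contr} (the bad-simplices argument transporting contractibility of $bTLC_n^\infty$ to $TC_n^\infty$ via the good-link identification $G_\sigma \cong bTLC_1^\infty(A',\Sigma)$), and Lemma \ref{lem:TC-inf-share-skl} (the Cantor-set clopen-subspace argument showing the skeleta agree). The one thing worth stating carefully is the skeleton principle: truncating to the $(m+1)$-skeleton does not change homotopy groups in degrees $\le m$. This is standard (cellular approximation), so I would cite it in a sentence rather than prove it. Thus the proof is genuinely a two-line deduction: contractibility in Proposition \ref{prop:TC-inf-contr} gives $(n-2)$-connectivity, restricting to the $(n-2)$-skeleton costs one degree of connectivity, and Lemma \ref{lem:TC-inf-share-skl} identifies that skeleton with the corresponding skeleton of $TC_n(A,\mathfrak{B}(\Sigma))$, which therefore inherits $(n-3)$-connectivity.
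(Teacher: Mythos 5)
Your proposal is correct and is exactly the paper's argument: the paper's proof of Theorem \ref{thm-conn-btree} is literally the single sentence ``Combining Proposition \ref{prop:TC-inf-contr} and Lemma \ref{lem:TC-inf-share-skl}, we immediately deduce \ldots'', and you have filled in that ``immediate'' deduction correctly with the standard cellular-approximation fact that passing to the $(n-2)$-skeleton preserves $\pi_k$ for $k \leq n-3$.
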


However, the statement that we will need is the following shifted version:

\begin{thm}
\label{thm-conn-btree-shifted}
Let $\Sigma$ be any connected surface without boundary and let $A$ be any decorated surface. Then the simplicial complex $TC_n(A\natural \mathfrak{B}(\Sigma),\mathfrak{B}(\Sigma))$ is $(n-2)$-connected.
\end{thm}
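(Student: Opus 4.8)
The plan is to deduce Theorem~\ref{thm-conn-btree-shifted} from Theorem~\ref{thm-conn-btree} by the same skeleton-shifting trick used to prove Corollary~\ref{thm-conn-lins-shifted} from Corollary~\ref{thm-conn-lins}. The key observation is that there is a natural isomorphism of simplicial complexes between $TC_n(A\natural \mathfrak{B}(\Sigma),\mathfrak{B}(\Sigma))$ and the $(n-1)$-skeleton of $TC_{n+1}(A,\mathfrak{B}(\Sigma))$. Granting this, Theorem~\ref{thm-conn-btree} says that $TC_{n+1}(A,\mathfrak{B}(\Sigma))$ is $((n+1)-3) = (n-2)$-connected, and a space that is $(n-2)$-connected has an $(n-1)$-skeleton that is still $(n-2)$-connected (killing homotopy in degrees $\leq n-2$ only involves attaching cells of dimension $\leq n-1$). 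Hence $TC_n(A\natural \mathfrak{B}(\Sigma),\mathfrak{B}(\Sigma))$ is $(n-2)$-connected, as claimed.

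The substance is therefore the isomorphism $TC_n(A\natural \mathfrak{B}(\Sigma),\mathfrak{B}(\Sigma)) \cong TC_{n+1}(A,\mathfrak{B}(\Sigma))^{(n-1)}$. To set this up, fix in $A \natural \mathfrak{B}(\Sigma)^{\natural (n+1)}$ a standard tethered $\mathfrak{B}(\Sigma)$-curve $\delta$ cutting off the \emph{first} copy of $\mathfrak{B}(\Sigma)$ (the one closest to $A$), so that the complementary surface is canonically $(A\natural \mathfrak{B}(\Sigma)) \natural \mathfrak{B}(\Sigma)^{\natural n}$; here we use repeatedly that $\mathfrak{B}(\Sigma) \natural \mathfrak{B}(\Sigma) \cong \mathfrak{B}(\Sigma)$, so that absorbing the first $\mathfrak{B}(\Sigma)$ into $A$ does not change the surface up to homeomorphism. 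Vertices and simplices of $TC_n(A\natural\mathfrak{B}(\Sigma),\mathfrak{B}(\Sigma))$, i.e. systems of disjoint tethered $\mathfrak{B}(\Sigma)$-curves in $A\natural\mathfrak{B}(\Sigma)^{\natural n}$ whose complement is $A\natural\mathfrak{B}(\Sigma)^{\natural (n-p-1)}$, correspond to systems of disjoint tethered $\mathfrak{B}(\Sigma)$-curves in $A\natural\mathfrak{B}(\Sigma)^{\natural(n+1)}$ that are disjoint from $\delta$, together with $\delta$ itself; the complement condition translates to a $\mathfrak{B}(\Sigma)$-absorption condition that is automatic because of $\mathfrak{B}(\Sigma)\natural\mathfrak{B}(\Sigma)\cong\mathfrak{B}(\Sigma)$ (this is exactly the phenomenon exploited in the proof of Lemma~\ref{lem:TC-inf-share-skl} and Corollary~\ref{thm-conn-lins}). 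Such augmented systems are precisely the simplices of $TC_{n+1}(A,\mathfrak{B}(\Sigma))$ that contain the vertex $[\delta]$, i.e. the simplices of the star of $[\delta]$; but the link of any vertex in $TC_{n+1}(A,\mathfrak{B}(\Sigma))$ is isomorphic to $TC_n$ of the complementary surface, and chasing the identification shows that forgetting $\delta$ gives the claimed isomorphism onto the $(n-1)$-skeleton of $TC_{n+1}(A,\mathfrak{B}(\Sigma))$ --- the dimension bound $n-1$ appearing because $TC_n(A\natural\mathfrak{B}(\Sigma),\mathfrak{B}(\Sigma))$ has dimension $n-1$, matching the link of $[\delta]$ in $TC_{n+1}$ (which is $\cong TC_n(A\natural\mathfrak{B}(\Sigma),\mathfrak{B}(\Sigma))$) sitting inside a complex of dimension $n$.

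Concretely, the cleanest way to phrase this is: the link of a vertex $[\alpha]$ in $TC_{m}(A,\mathfrak{B}(\Sigma))$ is isomorphic to $TC_{m-1}(A',\mathfrak{B}(\Sigma))$ where $A'$ is the complementary surface of $\alpha$, and when $\alpha$ is the standard curve above we have $A' \cong A\natural\mathfrak{B}(\Sigma)$. Applying this with $m = n+1$ gives $\mathrm{Lk}([\delta]) \cong TC_n(A\natural\mathfrak{B}(\Sigma),\mathfrak{B}(\Sigma))$, and since $\mathrm{Lk}([\delta])$ is a full subcomplex of $TC_{n+1}(A,\mathfrak{B}(\Sigma))$ of the correct dimension, its $(n-2)$-connectivity follows from that of the ambient complex by the standard fact that connectivity passes to skeleta/links at least one dimension above. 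I expect the only mild subtlety --- and the one point to write carefully --- is verifying that the identification of simplices respects the \emph{complementary-surface} conditions in both directions, which again rests entirely on the self-similarity $\mathfrak{B}(\Sigma)^{\natural k}\cong\mathfrak{B}(\Sigma)$ for $k\geq 1$ and on the fact that the space of level ends not enclosed by a curve system is a clopen, hence empty or Cantor, subset of the Cantor set --- exactly as in the proof of Lemma~\ref{lem:TC-inf-share-skl}. Everything else is formal.
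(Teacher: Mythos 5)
Your first paragraph is correct and is essentially the paper's argument, just phrased through $TC_{n+1}(A,\mathfrak{B}(\Sigma))$ and Theorem~\ref{thm-conn-btree} rather than through $TC^\infty_{n+1}(A,\mathfrak{B}(\Sigma))$ and Proposition~\ref{prop:TC-inf-contr} --- an equivalent packaging since the two complexes share the same $(n-1)$-skeleton by Lemma~\ref{lem:TC-inf-share-skl}. The key isomorphism $TC_n(A\natural X,X)\cong TC_{n+1}(A,X)^{(n-1)}$ is actually completely formal for \emph{any} decorated surfaces $A,X$: both are complexes of tethered $X$-curves on the same underlying surface $A\natural X^{\natural(n+1)}$, and the complement condition for a $p$-simplex in the first ($\cong (A\natural X)\natural X^{\natural(n-p-1)}$) coincides verbatim with that for the second ($\cong A\natural X^{\natural(n-p)}$) for $p\leq n-1$. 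No appeal to the self-similarity of $\mathfrak{B}(\Sigma)$ is needed for this identification (self-similarity is what drives Lemma~\ref{lem:TC-inf-share-skl}, a different statement). The skeleton-connectivity step is also correct: if $Z$ is $(n-2)$-connected then so is $Z^{(n-1)}$.

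Your second and third paragraphs, however, introduce errors that you should discard. You describe $TC_n(A\natural\mathfrak{B}(\Sigma),\mathfrak{B}(\Sigma))$ as living on $A\natural\mathfrak{B}(\Sigma)^{\natural n}$, but it lives on $(A\natural\mathfrak{B}(\Sigma))\natural\mathfrak{B}(\Sigma)^{\natural n}=A\natural\mathfrak{B}(\Sigma)^{\natural(n+1)}$. You then claim $\mathrm{Lk}([\delta])\cong TC_n(A\natural\mathfrak{B}(\Sigma),\mathfrak{B}(\Sigma))$; in fact, the link of a vertex in $TC_{n+1}(A,X)$ is $TC_n(A,X)$ with the \emph{same} $A$ (this is the rule used in the proof of Theorem~\ref{thm:reduction-to-TC}), and moreover the link of $[\delta]$ is a proper subcomplex of the $(n-1)$-skeleton, not equal to it --- any vertex that cannot be isotoped off $\delta$ lies in the skeleton but not in the link. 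Most seriously, there is no ``standard fact that connectivity passes to links'': the link of a vertex in a highly connected complex need not be highly connected at all (this is exactly what the \emph{wCM} property is a nontrivial extra condition to guarantee). Connectivity passes to \emph{skeleta}, which is what your first paragraph correctly uses. Cut the link discussion, verify the skeleton isomorphism by direct comparison of the simplex conditions as above, and the proof is complete and matches the paper's.
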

\begin{proof}
This will follow immediately from Proposition \ref{prop:TC-inf-contr} if we show that $TC_n(A\natural \mathfrak{B}(\Sigma),\mathfrak{B}(\Sigma))$ is isomorphic to the $(n-1)$-skeleton of $TC_{n+1}^\infty(A, \mathfrak{B}(\Sigma))$. This claim may be proven by the same argument as in the proof of Lemma \ref{lem:TC-inf-share-skl}, which now works up to one dimension higher since the condition for a $(p+1)$-tuple of vertices to span a $p$-simplex in $TC_n(A\natural \mathfrak{B}(\Sigma),\mathfrak{B}(\Sigma))$ is pairwise disjoint outside of the basepoint and that the complementary surface must be homeomorphic to $A\natural \mathfrak{B}(\Sigma) \natural \mathfrak{B}(\Sigma)^{\natural (n-p-1)}$, which is homeomorphic to $A\natural \mathfrak{B}(\Sigma)$ whenever $p\leq n-1$.
\end{proof}

\renewcommand{\theHsection}{appendixsection.\thesection}
\renewcommand{\thesection}{\Alph{section}}
\setcounter{section}{0}

\section{Appendix: homeomorphisms vs. diffeomorphisms}
\label{appendix-diff-homeo}

In this appendix, we include a sketch of proof of the following classical fact (used in \S\ref{s:hs} to pass freely between topological and smooth mapping class groups), to emphasise that it holds for non-compact as well as compact surfaces.

\begin{lem}[Lemma \ref{diff-to-homeo}]
For any smooth surface $S$, the forgetful map
\begin{equation}
\label{eq:forgetful-map}
\mathrm{Diff}(S,\partial S) \longrightarrow \mathrm{Homeo}(S,\partial S)
\end{equation}
is a weak homotopy equivalence of topological groups.
\end{lem}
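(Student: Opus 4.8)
The plan is to reduce to the compact case, which is classical, by an exhaustion argument, being careful that the statement we want is about \emph{weak} homotopy equivalence, so it suffices to produce compatible bijections on all homotopy groups.

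\textbf{Step 1: the compact case.} First I would recall that for a compact smooth surface $S$ (with boundary), the forgetful map $\Diff(S,\partial S) \to \Homeo(S,\partial S)$ is a weak homotopy equivalence. This is classical: in dimension $2$ every topological manifold has a unique smooth structure and $\Homeo$ is homotopy equivalent to $\Diff$ by results going back to Munkres and Cerf (see also the fact that both groups have contractible components for higher-genus surfaces, by Earle--Eells and Gramain, and are understood completely in the low-complexity cases). Since this is genuinely standard, I would simply cite it, as the paper's own phrasing (``classical fact'') signals.

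\textbf{Step 2: exhaustion in the non-compact case.} For general $S$, choose a smooth exhaustion $S = \bigcup_{i} S_i$ by compact smooth subsurfaces with $S_i \subseteq \mathrm{int}(S_{i+1})$ (relative to $S\smallsetminus\partial S$), with $\partial S \subseteq S_i$ for all $i$ and with each inclusion a homotopy equivalence onto its image appropriately, and such that the closure of $S \smallsetminus S_i$ is again a (possibly non-compact) subsurface. Then one has restriction fibrations: for the diffeomorphism groups, $\Diff(S,\partial S) \to \Emb(S_i, S)$ (embeddings restricting to the identity near $\partial S$) is a fibre bundle by the isotopy extension theorem, with fibre $\Diff(\overline{S\smallsetminus S_i}, \partial)$; similarly for $\Homeo$, using topological isotopy extension (Edwards--Kirby) in dimension $2$. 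The key point is that $\Homeo(S,\partial S) = \varprojlim$-type behaviour is controlled by these restrictions: a compact family $K \to \Homeo(S,\partial S)$ or $K \to \Diff(S,\partial S)$ only ``sees'' finitely much of the surface, so every element of $\pi_k$ of either group is detected on some $S_i$.

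\textbf{Step 3: assembling the weak equivalence.} Concretely, I would argue directly on homotopy groups. Given a map $f\colon S^k \to \Homeo(S,\partial S)$, its image is a compact subset of the mapping space, hence lies in a subspace of homeomorphisms supported (up to controlled size) in some $S_i$; combined with the compact case $\Diff(S_i,\partial)\simeq\Homeo(S_i,\partial)$ and the bundle descriptions above, one lifts/homotopes $f$ to a sphere in $\Diff(S,\partial S)$, giving surjectivity of $\pi_k(\Diff)\to\pi_k(\Homeo)$; an analogous argument with one-parameter-more (null-homotopies are maps out of $D^{k+1}$, still compact) gives injectivity. So the forgetful map induces isomorphisms on all $\pi_k$ and on $\pi_0$, i.e.\ is a weak homotopy equivalence. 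The same reasoning shows it is a weak equivalence of topological \emph{groups} (it is a continuous homomorphism to begin with, so no extra work is needed beyond the underlying spaces).

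\textbf{Main obstacle.} The subtle point is \emph{not} the compact case (which is cited) but making the exhaustion/compactness argument airtight: one must ensure that the topological isotopy extension theorem is available in this form for non-compact $2$-manifolds with boundary, and that the restriction maps to embedding spaces really are fibrations with the expected fibres, so that the ``only finitely much of the surface is seen'' heuristic becomes a rigorous statement about $\pi_k$. In the smooth category this is standard (Palais, Cerf); in the topological category one invokes Edwards--Kirby. Once these tools are in place, the reduction to the compact case is formal. Since the paper only asks for a sketch emphasising that compactness of $S$ is not needed, I would present Steps 1--3 at the level of detail above and refer to \cite{EdwardsKirby, Cerf, Munkres} for the two inputs (classical smoothing theory in dimension $2$, and topological isotopy extension).
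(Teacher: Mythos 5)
There is a genuine gap, and it lies at the heart of Steps~2--3. You assert that a compact family $K \to \Homeo(S,\partial S)$ ``only sees finitely much of the surface, so every element of $\pi_k$ of either group is detected on some $S_i$.'' This is false for the compact-open topology (which is the topology the paper uses on $\Homeo$ and $\Diff$). The compact-open topology controls behaviour only \emph{on} compact sets, not \emph{support} of homeomorphisms: a single self-homeomorphism of a non-compact surface need not be compactly supported, let alone a compact family. For a concrete counterexample, take $S = \bR^2$ and the path $t \mapsto \varphi_t$, $\varphi_t(x,y) = (x+t,y)$, for $t\in[0,1]$; its image is compact in $\Homeo(\bR^2)$ but no $\varphi_t$ with $t>0$ is supported in any compact subsurface, nor is it the extension by the identity of a homeomorphism of some $S_i$ fixing $\partial S_i$. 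More fundamentally, for infinite-type surfaces $\pi_0(\Homeo_\partial(S))$ is typically uncountable, so it cannot be the colimit of the countable groups $\pi_0(\Homeo(S_i,\partial S_i))$; the ``exhaustion'' heuristic simply does not compute $\pi_*$.

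The restriction fibration you propose, $\Diff(S,\partial S) \to \Emb(S_i,S)$ with fibre the diffeomorphisms fixing $S_i$, is real, but it does not reduce to the compact case: the fibre is a diffeomorphism group of $\overline{S\smallsetminus S_i}$, which is again non-compact, and the base is an embedding space into the non-compact $S$ whose smooth-vs-topological comparison is itself a smoothing-theory question. Using the five-lemma here is therefore circular. The paper avoids all of this by working \emph{pointwise} rather than by exhaustion: Kirby--Siebenmann smoothing theory identifies the homotopy fibre $\Homeo(S)/\Diff(S)$ with (path components of) a section space of a bundle over $S$ with fibre $\Homeo(\bR^2)/O(2)$, and then the real content is that $\Homeo(\bR^2)/O(2)$ is weakly contractible (stable homeomorphism theorem in dimension $2$ for $\pi_0$, Yagasaki for $\pi_{\geq 1}$). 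That argument is insensitive to compactness of $S$ precisely because it is local on $S$. Your Step~1 (citing the compact case) is fine, but it cannot serve as the input to an exhaustion argument; if you want to keep a ``reduce to known results'' strategy, you would need to cite the smoothing-theory package directly rather than the compact $2$-manifold theorem.
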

\begin{proof}[Sketch of proof]
Let us first assume that $\partial S = \varnothing$. It will suffice to show that the quotient space $\mathrm{Homeo}(S)/\mathrm{Diff}(S)$, which is the homotopy fibre of the induced map $B\mathrm{Diff}(S) \to B\mathrm{Homeo}(S)$ of classifying spaces, is weakly contractible. By smoothing theory \cite[Essay~V]{KirbySiebenmann}, it is weakly homotopy equivalent to the union of some path-components of the space of sections of a fibre bundle over $S$ with fibre $\mathrm{Homeo}(\bR^2)/\mathrm{Diff}(\bR^2) \simeq \mathrm{Homeo}(\bR^2)/O(2)$. By obstruction theory, this section space will be weakly contractible as long as $\mathrm{Homeo}(\bR^2)/O(2)$ is weakly contractible, equivalently the inclusion $O(2) \hookrightarrow \mathrm{Homeo}(\bR^2)$ induces isomorphisms on $\pi_i$ for all $i\geq 0$. For $i=0$ it is clearly injective and surjectivity follows from the fact that every orientation-preserving homeomorphism of $\bR^2$ is \emph{stable} (this is the stable homeomorphism theorem in dimension $2$, which follows by \cite{BrownGluck1964} from the annulus theorem in dimension $2$, due to Rad{\'o}) and thus isotopic to the identity. For $i\geq 1$ this is proven in \cite[Theorem~1.1]{Yagasaki2000}.

We have thus proven that the inclusion $\mathrm{Diff}(S) \hookrightarrow \mathrm{Homeo}(S)$ is a weak homotopy equivalence for any surface $S$ without boundary. A similar (but easier) argument implies the corresponding statement for $1$-manifolds, using the basic fact that $\mathrm{Homeo}^+(\bR)$ is contractible. From these two facts, the restriction fibrations $\mathrm{Diff}(S) \to \mathrm{Diff}(\partial S)$ and $\mathrm{Homeo}(S) \to \mathrm{Homeo}(\partial S)$ and the five-lemma, we deduce that \eqref{eq:forgetful-map} is a weak homotopy equivalence also when $\partial S \neq \varnothing$.
\end{proof}

\begin{rem}
An alternative reference for smoothing theory, which allows manifolds to have non-empty boundary, is \cite{BurgheleaLashof}. However, it assumes that the underlying manifold is compact, so it does not apply to infinite-type surfaces.
\end{rem}

\begin{rem}
One could try to prove that $\mathrm{Homeo}(\bR^2)/O(2)$ is weakly contractible by applying Morlet's weak equivalence $\Omega^3(\mathrm{Homeo}(\bR^2)/O(2)) \simeq \mathrm{Diff}(\bD^2,\partial \bD^2)$ (which is smoothing theory applied to the $2$-disc) together with Smale's theorem \cite{Smale1959} that $\mathrm{Diff}(\bD^2,\partial \bD^2)$ is contractible. However, this only implies that $\pi_i(\mathrm{Homeo}(\bR^2)/O(2)) = 0$ for $i\geq 3$.
\end{rem}

\bibliographystyle{alpha}
\bibliography{references.bib}

\end{document}